 \newtheorem{The}{Theorem}[section]
 \newtheorem{Cor}[The]{Corollary}
 \newtheorem{Lem}[The]{Lemma}
 \newtheorem{Pro}[The]{Proposition}
 \theoremstyle{definition}
 \newtheorem{Def}[The]{Definition}
 \theoremstyle{remark}
 \newtheorem{Rem}[The]{Remark}
 \numberwithin{equation}{section}
\newcommand{\T}{\mathbb{T}}
\newcommand{\R}{\mathbb{R}}
\newcommand{\Z}{\mathbb{Z}}
\newcommand{\N}{\mathbb{N}}
\begin{document}
\title[Closed geodesics pass through poles]{On class A Lorentzian 2-tori with poles I: Closed geodesics pass through poles}
\author{Lu Peng, Liang Jin \and Xiaojun Cui}
\thanks{X. Cui is supported by the National Natural Science Foundation of China (Grants 11571166, 11631006, 11790272), the Project Funded by the Priority Academic Program Development of Jiangsu Higher Education Institutions (PAPD) and the Fundamental Research Funds for the Central Universities.\\
L. Peng, X. Cui (\Letter): Department of Mathematics, Nanjing University, Nanjing 210093, China. e-mail: xcui@nju.edu.cn\\
L. Peng: e-mail: penglu1991@gmail.com\\
L. Jin: Academy of Mathematics and Systems Science, CAS, Beijing 100190, China. e-mail: jinliang@amss.ac.cn}

\subjclass[2010]{53B30, 53C22, 53C50}
\date{\today}
\keywords{class A Lorentzian 2-torus; closed timelike geodesic; $P$-motion; timelike pole.}

\begin{abstract}
  In this paper, by studying certain isometries on globally hyperbolic planes, we prove that if $p$ is a timelike pole on a class A Lorentzian 2-torus, then there exists a closed timelike geodesic passing through $p$ with any preassigned free homotopy class in the interior of the stable time cone. We also show a non-rigid result when timelike poles appear.
\end{abstract}
\maketitle

\section{Introduction and statement of the main results}
In Riemannian geometry, a point $p$ on a complete Riemannian manifold $(M,g_R)$ is a pole if no geodesic $\gamma:[0,\infty)\rightarrow M$ with $p=\gamma(0)$ contains a pair of conjugate points, or equivalently, $\text{exp}_{p}:T_{p}M\rightarrow M$ is a covering map. Thus the universal cover of $M$ is diffeomorphic to $\mathbb{R}^{n}$, where $n=\text{dim} M$, see also \cite{Carmo}. Among such manifolds, there are certain ones admit no metric with negative sectional curvature, the most typical examples are tori. One may expect the presence of poles on Riemannian tori is, in some sense, special and would have rigid effects on the metric structure. From this point of view, many studies show that this is indeed the case. Perhaps the most celebrated result is due to E. Hopf, which says that any Riemannian $n$-torus with no conjugate points (i.e., all points are poles) is flat. In the paper \cite{Hopf}, Hopf himself proved the case when $n=2$. As a landmark success of global differential geometry and dynamical system, the proof for the higher dimensional case was finally completed by D. Burago and S. Ivanov in \cite{Burago-Ivanov}.

On the way of the pursuit of Hopf's theorem, much efforts have been paid on the investigation of dynamical behaviors of geodesics when poles appear. In his paper \cite{Innami}, N. Innami proved that for any pole on a Riemannian $n$-torus, there exists a closed geodesic passing through it with any preassigned free homotopy class. Recently in \cite{Bangert10}, V. Bangert showed the similarities between the dynamics of the geodesic flow of a 2-torus with a pole and the integrable dynamics of a torus of revolution. It is the motivation of this paper and our next publication ``On class A Lorentzian 2-tori with poles II: Foliations of tori by timelike lines" to reveal similar results of \cite{Innami} and \cite{Bangert10} under the setting of Lorentzian geometry.

It is plausible that the notion of pole has the following counterpart in Lorentzian geometry,
\begin{Def}
Let $(M,g)$ be a Lorentzian manifold, a point $p$ on $(M,g)$ is a timelike pole if no timelike geodesic $\gamma:[0,a)\rightarrow M$ starting from $p=\gamma(0)$ contains a pair of conjugate points.
\end{Def}

\begin{Rem}
One could also define the notion of spacelike pole, but this is not the focus of this paper.
\end{Rem}

Comparing with Riemannian cases, the dynamics of geodesics on Lorentzian tori are more complicated. Indeed, we have to rebuild some properties that are relatively apparent in the positive-definite cases. Also, because of these reasons, we choose not to focus on all but some typical kind of two dimensional Lorentzian tori, namely

\begin{Def}\label{class A}
A Lorentzian 2-torus $(\mathbb{T}^{2},g)$ is called class A if it is totally vicious (i.e., every point lies on a timelike loop) and its Abelian cover $(\mathbb{R}^{2},g)$ is globally hyperbolic.
\end{Def}

\begin{Rem}
Class A 2-tori as well as their higher dimensional generalizations, class A spacetimes are proved to be suitable choices for developing global variational methods in the setting of Lorentzian geometry, see \cite{Sc,Su1,Su2,Su3} for more details.
\end{Rem}

Throughout the paper, we shall always denote $\pi:(\mathbb{R}^{2},g)\rightarrow(\mathbb{T}^{2},g)$ the Abelian cover of a class A Lorentzian 2-torus $\mathbb{T}^{2}$. The deck transformations $(\cong \mathbb{Z}^2)$ associated to $\pi$ act on $\mathbb{R}^2$ and are defined by
\begin{displaymath}
T: (k,x)\in\mathbb{Z}^2\times\mathbb{R}^2\mapsto x+k \in\mathbb{R}^2.
\end{displaymath}
$T_k:=T|_{k\times\mathbb{R}^2}$ is the translation by $k\in\mathbb{Z}^2$ and is an isometry for every $k\in\mathbb{Z}^2$ with respect to $g$. Denote by $\Gamma:=\{T_k:k\in\mathbb{Z}^2\}$ the group of deck transformations associated to $\pi$ on $\mathbb{R}^2$. We call a timelike geodesic $\gamma$ on $(\R^2,g)$ periodic if $T_k\gamma=\gamma$ for some deck transformation $T_k\in\Gamma,k\neq 0$. Using the notions of rational asymptotic direction and maximal geodesic in Section 2, we prove that

\begin{The} \label{Thm3}
Let $(\T^2,g)$ be a class A Lorentzian 2-torus with a timelike pole $p$, then there exists a maximal closed timelike geodesic passing through $p$ with any preassigned free homotopy class in the interior of the stable time cone. More precisely, if $(\R^2,g)$ is the Abelian cover of $(\T^2,g)$ and $\bar{p}\in(\R^2,g)$ is any lift of $p$, then for any rational asymptotic direction $\alpha\in(m^-,m^+)$, there exists a maximal periodic timelike geodesic passing through $\bar{p}$ with asymptotic direction $\alpha$.
\end{The}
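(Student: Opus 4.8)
\medskip
\noindent\textbf{A proof proposal.}
I would work on the Abelian cover, fix a lift $\bar p\in(\R^2,g)$ of the pole, and let $k\in\Z^2$ be the primitive, future-pointing vector whose slope is the prescribed rational direction $\alpha$. It suffices to produce a maximal timelike geodesic through $\bar p$ that is invariant under the deck transformation $T_k$: such a geodesic automatically has asymptotic direction $\alpha$ and descends to a maximal closed timelike geodesic through $p$ in the free homotopy class $k$.

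The first step is to harvest the pole hypothesis. Since no timelike geodesic issuing from $\bar p$ has conjugate points and $(\R^2,g)$ is globally hyperbolic, I expect (this should be part of the Section~2 setup, via the Lorentzian Morse index theorem and the Avez--Seifert theorem) that $\exp_{\bar p}$ restricts to a diffeomorphism from the future timelike cone of $T_{\bar p}\R^2$ onto $I^+(\bar p)$, that every timelike geodesic from $\bar p$ is maximal for the time-separation function $d$, and that the time-reversed statements hold as well; hence there is a \emph{unique} maximal timelike geodesic from $\bar p$ to each point of $I^+(\bar p)$, and, as every $T_m$ is an isometry, the same is true at each point of the orbit $\Gamma\bar p$. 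Using total viciousness and the fact that $\alpha$ lies in the \emph{interior} of the stable time cone, one checks $T_k\bar p\in I^+(\bar p)$. Let $\sigma$ be the unique maximal timelike geodesic from $\bar p$ to $T_k\bar p$ and let $\bar\sigma\colon\R\to\R^2$ be its complete bi-infinite extension, which is again maximal.

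The geodesic $\bar\sigma$ is in fact the only possible candidate: if $\gamma$ were any $T_k$-invariant maximal timelike geodesic through $\bar p$, then parametrizing so that $\gamma(0)=\bar p$ and letting $\tau>0$ be the period, the segment $\gamma|_{[0,\tau]}$ runs from $\bar p$ to $T_k\bar p$ and is maximal (a timelike geodesic from the pole $\bar p$ has no conjugate points), hence equals $\sigma$ by uniqueness, so $\gamma=\bar\sigma$. Thus the whole theorem reduces to the single assertion $T_k\bar\sigma=\bar\sigma$. Using the uniqueness of maximizers from the poles $\bar p$ and $T_k\bar p$ together with the standard fact that $d(x,z)=d(x,y)+d(y,z)$ forces $y$ onto a maximal geodesic from $x$ to $z$, one obtains
\[
T_k\bar\sigma=\bar\sigma\ \Longleftrightarrow\ \bar\sigma\ \text{passes through}\ T_{2k}\bar p\ \Longleftrightarrow\ d(\bar p,T_{2k}\bar p)=2\,d(\bar p,T_k\bar p),
\]
because once $\bar\sigma$ meets $T_{2k}\bar p$, comparing $\bar\sigma|_{[T_k\bar p,\,T_{2k}\bar p]}$ with $T_k\sigma$ (both the unique maximal geodesic from $T_k\bar p$ to $T_{2k}\bar p$) matches position and velocity at $T_k\bar p$, giving $T_k\bar\sigma=\bar\sigma$, and then $\bar\sigma$ meets every $T_{nk}\bar p$.

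The main obstacle is therefore the distance identity $d(\bar p,T_{2k}\bar p)=2\,d(\bar p,T_k\bar p)$ — equivalently, that no timelike curve from $\bar p$ to $T_{2k}\bar p$ is longer than the concatenation $\sigma\ast T_k\sigma$. Superadditivity of $n\mapsto d(\bar p,T_{nk}\bar p)$ only gives the inequality "$\ge$", so one must exclude a strictly longer connection, i.e.\ show that $\bar p$ does not lose average length over one period relative to the asymptotic rate $\lim_n n^{-1}d(\bar p,T_{nk}\bar p)$. I expect this to rest on the crossing theory of maximal timelike geodesics from Section~2: a maximal timelike geodesic line distinct from $\bar\sigma$ can meet $\bar\sigma$ at most once (two crossings would, via the maximizing property and uniqueness from the pole, force the two lines to coincide), and a "shortcut" maximal geodesic from $\bar p$ to $T_{2k}\bar p$ avoiding $T_k\bar p$, together with its $\Gamma$-translates and the uniqueness of maximizers from the poles $T_{nk}\bar p$, would violate this. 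This is precisely where the "isometries on globally hyperbolic planes" and the $P$-motion of the title should enter — the $P$-motion being the map induced by $T_k$ on the family of maximal timelike geodesics (or on the circle of directions at $\bar p$), whose relevant fixed point is exactly $\bar\sigma$ — and establishing its continuity, monotonicity, and the absence of a wandering orbit is the technical heart of the proof. Once the identity is secured, projecting $\bar\sigma$ to $\T^2$ and checking that the image is a maximal closed timelike geodesic through $p$ in the class $k$ is routine.
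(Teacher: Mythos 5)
Your reduction of the theorem to the single identity $d(\bar p,T_{2k}\bar p)=2\,d(\bar p,T_k\bar p)$ is a reasonable way to organize the problem, and you correctly locate the crux there. But that identity is exactly the content of the paper's Theorem \ref{main_thm2} (that the displacement function $d_\phi(q)=d(q,\phi q)$ of $\phi=T_k$ attains its maximum at the pole $\bar p$), and your sketch of how to prove it does not close the gap. You propose to rule out a longer maximizer $S$ from $\bar p$ to $T_{2k}\bar p$ avoiding $T_k\bar p$ by playing $S$ against its translates via the crossing lemma. This cannot work on its own: two distinct maximal timelike segments may perfectly well cross exactly once, so the configuration $S$, $T_kS$, $T_{2k}S,\dots$ produces no contradiction from crossings alone. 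In the paper's own use of this configuration (proof of Theorem \ref{prop4-5}), the single crossing point $q$ of $S$ and $\phi S$ yields $d(q,\phi q)>d(p,\phi p)$, and the contradiction comes only because $p$ is \emph{already assumed} to maximize the displacement function --- which is precisely what you are trying to prove. The paper's actual mechanism is different: since $d_{T_k}$ is $\Gamma$-invariant it descends to the compact torus and attains its maximum \emph{somewhere}, producing an axis $\gamma$ of $T_k$ (Theorem \ref{prop4-5}); one then sandwiches $\bar p$ between two such parallel axes and runs a Busemann--Innami length comparison between the broken geodesic $A_n$ through the orbit $T_{nk}\bar p$ (built from the unique maximal segments and the inextendible maximal rays that exist because every $T_{nk}\bar p$ is a pole) and a path $B_n$ that detours along the axis, obtaining $d(\bar p,T_k\bar p)\geq\sup_q d(q,T_kq)$ in the limit $n\to\infty$. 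Nothing in your proposal supplies this comparison or any substitute for it.

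A secondary but genuine issue: you assert that the bi-infinite extension $\bar\sigma$ of $\sigma$ "is again maximal." Being a timelike pole gives that each future and each past inextendible geodesic from $\bar p$ is a ray (Remark \ref{Rem_maximalgeodesic}), but it does not give that the two halves concatenate into a line; maximality of arbitrarily long arcs of $\bar\sigma$ is established in the paper only through the induction in Theorem \ref{prop4-5}, which again hinges on $\bar p$ realizing the supremum of the displacement function. Since both this assertion and your equivalences $T_k\bar\sigma=\bar\sigma\Leftrightarrow\cdots$ lean on it, the unproved maximality of the displacement function at the pole is doing all the work in your argument, and it is left unestablished.
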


Like Riemannian cases, the appearance of timelike poles does not imply the metric rigidity, as the following theorem shows

\begin{The}\label{Thm1}
For any $\varepsilon$, $0<\varepsilon<1$, there is a non-flat class A 2-torus $(\mathbb{T}^{2},g)$ such that $(1-\varepsilon)\text{Vol }(\mathbb{T}^{2},g)\leq\lambda\leq\text{Vol }(\mathbb{T}^{2},g)$, where $\lambda$ is the volume of the set of all timelike poles on $(\mathbb{T}^{2},g)$ and $\text{Vol }(\mathbb{T}^{2},g)$ is the total volume of $(\mathbb{T}^{2},g)$.
\end{The}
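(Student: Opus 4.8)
The plan is to produce an explicit family of examples carrying a Killing field, for which the ``no conjugate points'' property can be read off from a single Jacobi field. Fix a smooth, positive, non-constant, $1$-periodic function $C\colon\R\to(0,\infty)$ and, on $\T^{2}=(\R/\Z)^{2}$ with coordinates $(u,v)$, put $g=-du^{2}+C(u)^{2}\,dv^{2}$, so that $\partial_{u}$ is everywhere timelike, $\partial_{v}$ is a (spacelike) Killing field, and the Gaussian curvature equals $K=C''/C$. Since a periodic affine function is constant, $K\not\equiv 0$, so $(\T^{2},g)$ is non-flat. It is class A: each circle $u\mapsto(u,v_{0})$ is a timelike loop, so $(\T^{2},g)$ is totally vicious; and on the Abelian cover $(\R^{2},g)$ the substitution $\tilde u(u)=\int_{0}^{u}C(r)^{-1}\,dr$ (a diffeomorphism of $\R$, because $C$ is bounded away from $0$ and $\infty$) gives $g=C^{2}\,(-d\tilde u^{2}+dv^{2})$, which is globally conformal to $2$-dimensional Minkowski space; hence $(\R^{2},g)$ is globally hyperbolic.

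The core claim is that \emph{every} point of $(\T^{2},g)$ is a timelike pole; granting this, $\lambda=\mathrm{Vol}(\T^{2},g)\ge(1-\varepsilon)\,\mathrm{Vol}(\T^{2},g)$ for every $\varepsilon\in(0,1)$, and any such $C$ settles the theorem. Let $\gamma(s)=(u(s),v(s))$ be a timelike geodesic parametrized by proper time. Since $\partial_{v}$ is Killing, $p:=g(\dot\gamma,\partial_{v})=C(u)^{2}\dot v$ is constant, and the normalization $-\dot u^{2}+C^{2}\dot v^{2}=-1$ forces $\dot u^{2}=1+p^{2}/C^{2}\ge 1$; thus $\dot u$ has constant sign and never vanishes. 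Restricting $\partial_{v}$ to $\gamma$ yields a Jacobi field, whose component orthogonal to $\dot\gamma$ is again a Jacobi field (on a surface the tangential part of a Jacobi field is $(as+b)\dot\gamma$). Using the geodesic equation $\ddot u=-CC'\dot v^{2}=-C'p^{2}/C^{3}$ one computes that this orthogonal component has scalar magnitude $j_{1}(s)=\dot u(s)\,C(u(s))$ relative to a parallel unit normal; indeed $\dot j_{1}=C'(u(s))$ and $\ddot j_{1}=C''\dot u=(C''/C)j_{1}=Kj_{1}$, the correct scalar Jacobi equation for timelike geodesics in a Lorentzian surface. Because $\dot u$ keeps a fixed sign and $C>0$, the Jacobi field $j_{1}$ is nowhere zero; by the usual reduction of order the Jacobi field vanishing at $\gamma(s_{0})$ is $j_{1}(s)\int_{s_{0}}^{s}W\,j_{1}^{-2}$ up to scale, hence does not vanish for $s\neq s_{0}$. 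So $\gamma$ contains no pair of conjugate points, and since every timelike geodesic through a prescribed point arises this way, every point is a timelike pole.

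The only delicate points are the conformal/causal bookkeeping on the Abelian cover and the last step: one must project $\partial_{v}$ onto the normal bundle of $\gamma$ correctly, verify the scalar Jacobi equation with the Lorentzian sign (opposite to the Riemannian one, since the normal direction to a timelike geodesic is spacelike), and observe that the nonvanishing of $j_{1}$ is precisely the statement that a timelike geodesic cannot turn around in the $u$-direction --- which is forced by $\partial_{u}$ being timelike. I would close by remarking that taking $C=C_{t}:=1+t\psi$, with $\psi$ a fixed non-constant periodic function and $t$ small, produces non-flat class A tori arbitrarily $C^{\infty}$-close to a flat one on which every point is still a timelike pole, which exhibits the non-rigidity even near the flat locus.
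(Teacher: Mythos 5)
Your construction is correct, but it takes a genuinely different (in fact, time-dual) route from the paper's and proves something stronger. The paper works with the static metric $g_f=-f^{2}(x)\,dt^{2}+dx^{2}$, warped in the \emph{spacelike} coordinate: there the Clairaut-type integral $f\cosh\psi=\mathrm{const}$ confines timelike geodesics to regions where $f$ is small enough, geodesics can turn around in $x$, and only points lying over the maxima of $f$ are shown to be timelike poles (Lemma 3.1); this is why $f$ must be pinned at its maximum on a set of measure $1-\varepsilon$ and why $\varepsilon$ enters the volume bound. Your metric $-du^{2}+C(u)^{2}\,dv^{2}$ warps in the \emph{timelike} coordinate (up to relabeling coordinates it is exactly $-g_f$, so your timelike poles are the paper's spacelike poles), and the decisive structural difference is that the unit-speed constraint forces $\dot u^{2}=1+p^{2}/C^{2}\geq 1$: no timelike geodesic can turn around, so the normal component $\pm C\dot u=\pm\sqrt{C^{2}+p^{2}}$ of the Killing--Jacobi field $\partial_{v}$ is bounded below by $\min C>0$ along every timelike geodesic, and disconjugacy of the scalar Jacobi equation follows from the existence of a nowhere-vanishing solution. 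Hence \emph{every} point is a timelike pole, $\lambda=\mathrm{Vol}(\mathbb{T}^{2},g)$, and the statement holds for all $\varepsilon$ simultaneously. Your computations check out ($\dot j_{1}=C'$, $\ddot j_{1}=C''\dot u=(C''/C)j_{1}$), and the sign worry you flag can be sidestepped entirely: the restriction of a Killing field to a geodesic is a Jacobi field in any signature, and on a surface its normal component automatically satisfies the scalar Jacobi equation, so you never need to fix a curvature convention. Two closing remarks: your example does not bear on the open problem recalled in Remark 1.8, since spacelike geodesics of your metric can oscillate in $u$ and will in general develop conjugate points; and your family $C_{t}=1+t\psi$ is a pleasant sharpening of the paper's non-rigidity message.
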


\begin{Rem}
In contrast with Riemannian tori, the Hopf's theorem does not even hold for Lorentzian 2-tori! The so-called Clifton-Pohl torus offers such an example. However, it is the only non-flat Lorentzian 2-torus without conjugate points as far as we know and is not of class A.

On the other hand, in \cite{Bavard-Mounoud} and \cite{Su2}, the authors had shown that all Lorentzian class A metrics on a 2-torus are contained in one connected component of the space of Lorentzian metrics on the 2-torus and there exist metrics without conjugate points in each such component. But the non-flat metrics without conjugate points constructed in \cite{Bavard-Mounoud} are not geodesically complete. Hence, to the best of our knowledge, the existence of geodesically complete non-flat Lorentzian tori without conjugate points is still unknown and we have reason to believe that the Hopf's theorem may be true in the category of class A Loretzian 2-tori, especially with the geodesic completeness condition.
\end{Rem}

The paper is organized as follows. In Section $2$, we equip ourself with enough prerequisites on class A 2-tori for further discussions. In Section 3, we construct a non-flat class A Lorentzian metric on $\T^2$ with timelike poles, then give a proof of Theorem \ref{Thm1}. Section 4 is devoted to a study of certain isometries called $P$-motions on globally hyperbolic spacetimes. In Section 5, we further discuss $P$-motions on the Abelian cover of class A 2-tori. Using Lorentzian Busemann functions, we prove that two axes whose asymptotic directions in $(m^-,m^+)$ of the same $P$-motion are parallel. In Section $6$, combining our results in Sections $4$ and $5$, we complete the proof of Theorem \ref{Thm3}.

\section{Preliminaries}
In this section, we introduce elements in Lorentzian geometry as well as previous results on class A 2-tori that are necessary for our presentation. We recommend the textbook \cite{B-E-E} and papers \cite{Cui-Jin,Jin-Cui1,Jin-Cui2,Sc,Su1,Su2,Su3} for a comprehensive reference on these topics.

\subsection{A brief introduction to global Lorentzian geometry}
~\\
\textit{2.1.1. Spacetimes and causal relations:}

We shall refer standard notions in Lorentzian geometry to \cite{B-E-E}. Let $(M,g)$ be a smooth connected Lorentzian manifold. A nonzero tangent vector $v\in TM$ is called timelike (resp. null, spacelike) or causal if $g(v,v)<0 (\text{resp.}=0, >0) \text{ or}\leq 0$. A smooth curve is said to be timelike (resp. null, spacelike) or causal if its tangent vectors are always timelike (resp. null, spacelike) or causal. $(M,g)$ is time oriented if $M$ admits a smooth timelike vector field $X$, any time oriented Lorentzian manifold is called a spacetime. A causal vector $v\in T_pM$ is called future (resp. past) directed if $g(X(p),v)<0(\text{resp.} >0)$. A smooth curve $\gamma$ of $(M,g)$ is called future (resp. past) directed if its tangent vectors are always future (resp. past) directed. Note that a smooth causal curve in a spacetime $(M,g)$ is either future directed or past directed, see \cite[p.54]{B-E-E}.

Throughout this paper, we always consider spacetimes, rather than general Lorentzian manifolds, and future directed curves if there is no additional explanation.

Let $(M,g)$ be a spacetime and $(H,h)$ be a Riemannian manifold, we recall that the Lorentzian product of $(M,g)$ and $(H,h)$ is the manifold $M\times H$ equipped with the Lorentzian metric $\hat{g}:=\pi^{\ast}g+\eta^{\ast}h$, where $\pi:M\times H\rightarrow M$ and $\eta:M\times H\rightarrow H$ denote the canonical projection maps. It is easy to see that $(M\times H,\hat{g})$ is also a spacetime by \cite[Lemma 3.54]{B-E-E}.

For $p,q\in M$, we say $p\ll q$ if there is a piecewise smooth future directed timelike curve from $p$ to $q$, and $p\leq q$ if either $p=q$ or there is a piecewise smooth future directed causal curve from $p$ to $q$. The chronological future of $p$ is $I^+(p):=\{q\in M: p\ll q\}$, the causal future of $p$ is $J^+(p):=\{q\in M: p\leq q\}$. The chronological and causal past are similarly defined by reversing the time orientation. For a subset $S\subseteq M$, $I^{\pm}(S):=\bigcup_{p\in S}I^{\pm}(p)$ and a curve $\gamma$, $I^\pm(\gamma):=I^\pm(\text{Im}(\gamma))$.

Note that both $I^+(p)$ and $I^-(p)$ are open sets for any point $p\in M$ and the relations $\ll,\leq$ are transitive. Moreover, we have
\begin{equation*}
  p\ll q \quad\text{and}\quad  q\leq r \quad\text{implies}\quad  p\ll r,
\end{equation*}
and
\begin{equation*}
  p\leq q \quad\text{and}\quad q\ll r \quad\text{implies}\quad  p\ll r.
\end{equation*}
~\\
\textit{2.1.2. Lorentzian distance function and its derivatives:}

Let $\gamma:[a,b]\rightarrow M$ be a continuous and piecewise smooth causal curve and $a=a_{0}<...<a_{n}=b$ a partition of $[a,b]$ such that $\gamma|_{[a_{i},a_{i+1}]},i=0,...,n-1$ is smooth, the Lorentzian arclength of $\gamma$ associated to $g$ is
\begin{equation*}
L^{g}(\gamma):=\sum_{i=1}^{n}\int_{a_{i}}^{b_{i}}\sqrt{-g(\dot{\gamma},\dot{\gamma})(t)}dt.
\end{equation*}
Now if $q\in J^+(p)$, the Lorentzian distance function $d:M\times M\rightarrow\mathbb{R}$ associated to $g$ is defined by
\begin{equation*}
d(p,q):=\sup\{L^{g}(\gamma):\gamma\in\mathcal{C}^{+}(p,q)\},
\end{equation*}
where $\mathcal{C}^{+}(p,q)$ denotes the set of all future directed continuous and piecewise smooth causal curves from $p$ to $q$; if $q\notin J^+(p)$, $d(p,q)$ is defined to be zero. From the above definition,
\begin{itemize}
  \item $d(p,q)>0$ if and only if $q\in I^+(p)$,
  \item $d$ satisfies the reverse triangle inequality, i.e., if $p\leq q\leq r$, then
        \begin{equation}\label{rt}
        d(p,r)\geq d(p,q)+d(q,r).
        \end{equation}
\end{itemize}

We could also define distance between a point $p\in M$ and a set $S\subseteq M$ or two sets $S_1,S_2\subseteq M$ by
\begin{equation}\label{diss}
d(p,S):=\sup_{q\in S}\, d(p,y),\hspace{0.6cm}d(S_1,S_2):=\sup_{p\in S_1,q\in S_2} d(p,q).
\end{equation}
If $p\ll f,f\in S$ and $d(p,f)=d(p,S)<\infty$, we call $f$ a foot of $p$ on $S$.
~\\

The regularity of Lorentzian distance function comes from reasonable conditions on the causal structures of spacetimes. Perhaps the most important one is the global hyperbolicity, which plays a similar role in Lorentzian geometry as the completeness plays in Riemannian geometry. Instead of giving the definition, we just state two of its consequences, they are implicitly used here and there in this paper.
\begin{Pro}[{\cite[Lemma 4.5, Theorem 6.1]{B-E-E}}]\label{globally-hyperbolic}
Let $(M,g)$ be a globally hyperbolic spacetime, then
\begin{enumerate}
  \item the Lorentzian distance function $d$ is continuous and finite on $M\times M$,
  \item for any $p,q\in M$ with $p\leq q$, there is a causal geodesic $\gamma$ from $p$ to $q$ with $L^{g}(\gamma)=d(p,q)$. Such geodesics are called maximal geodesics, or simply called maximizers.
\end{enumerate}
\end{Pro}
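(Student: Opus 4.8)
The plan is to deduce both assertions from a single \emph{limit curve} mechanism, which is where global hyperbolicity enters decisively. Recall that $(M,g)$ being globally hyperbolic means it is strongly causal and that every causal diamond $J^+(p)\cap J^-(q)$ is compact. I would fix once and for all an auxiliary complete Riemannian metric $h$ on $M$ and parametrize every causal curve by $h$-arclength; this provides canonical $1$-Lipschitz parametrizations to which Arzel\`a--Ascoli applies, circumventing the absence of a natural parametrization for null or nearly null curves.

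The first step, and the technical heart of the argument, is a \emph{no-imprisonment} estimate: causal curves contained in a fixed compact set $K$ have uniformly bounded $h$-length. This is exactly where strong causality is used, since a future inextendible causal curve cannot remain imprisoned in a compact set; a quantitative version, obtained by covering $K$ with finitely many causally convex neighbourhoods through each of which a causal curve can pass only once, yields the uniform bound. Granting this, finiteness of $d$ in part~(1) is immediate: for $q\in J^+(p)$ every competitor lies in $K:=J^+(p)\cap J^-(q)$, so it has $h$-length at most some $C$, and since $-g(v,v)\le C' h(v,v)$ for all causal $v$ over the compact set $K$ (by continuity and compactness of the relevant set of directions), each competitor has $L^{g}\le \sqrt{C'}\,C$, whence $d(p,q)<\infty$; when $q\notin J^+(p)$ one has $d(p,q)=0$ by definition.

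For continuity in part~(1), lower semicontinuity of $d$ holds on any spacetime and I would quote it directly: near a curve almost realizing $d(p,q)$ one can perturb the endpoints, so $d$ does not drop in the limit. The reverse inequality, upper semicontinuity, again needs global hyperbolicity. Suppose $p_n\to p$ and $q_n\to q$ with $\limsup d(p_n,q_n)>d(p,q)$, and choose causal $\gamma_n$ from $p_n$ to $q_n$ with $L^{g}(\gamma_n)\to \limsup d(p_n,q_n)$. Choosing $p'\ll p$ and $q\ll q'$, openness of $I^{\pm}$ gives $p'\ll p_n$ and $q_n\ll q'$ for large $n$, so all but finitely many $\gamma_n$ lie in the common compact diamond $J^+(p')\cap J^-(q')$; by the no-imprisonment bound and Arzel\`a--Ascoli a subsequence converges uniformly to a causal limit curve $\gamma$ from $p$ to $q$. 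Upper semicontinuity of the Lorentzian length functional (itself proved by working in convex normal charts, where $L^{g}$ is controlled by the endpoints) then gives $L^{g}(\gamma)\ge \limsup L^{g}(\gamma_n)>d(p,q)$, contradicting $L^{g}(\gamma)\le d(p,q)$. Hence $d$ is continuous.

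Part~(2) reuses the same machinery. Given $p\le q$, finiteness lets me pick a maximizing sequence of causal curves $\gamma_n$ from $p$ to $q$ with $L^{g}(\gamma_n)\to d(p,q)$; the limit curve lemma produces a causal limit $\gamma$ from $p$ to $q$, and upper semicontinuity forces $L^{g}(\gamma)\ge d(p,q)$, while the reverse holds trivially, so $L^{g}(\gamma)=d(p,q)$. It remains to upgrade this maximizer to a geodesic, where I would invoke the local theory: a causal curve maximizing $L^{g}$ between its endpoints must maximize length on each small subinterval, and in a convex normal neighbourhood the unique local maximizer between causally related points is the radial geodesic, so a corner or a non-geodesic arc could be shortcut to strictly greater Lorentzian length by the reverse first variation formula, contradicting maximality. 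Thus $\gamma$ is, up to reparametrization, a causal geodesic realizing $d(p,q)$. I expect the main obstacle to be the no-imprisonment/uniform-length estimate together with the upper semicontinuity of $L^{g}$; both genuinely require the causal hypotheses and confinement to compact diamonds, whereas the passage from maximizer to geodesic is local and comparatively routine.
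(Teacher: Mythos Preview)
The paper does not supply its own proof of this proposition: it is stated with a citation to \cite[Lemma 4.5, Theorem 6.1]{B-E-E} and used as a black box thereafter. Your sketch is essentially the standard argument one finds in that reference (limit curve lemma in compact causal diamonds, upper semicontinuity of the Lorentzian length functional, and the local upgrade from maximizer to geodesic via convex normal neighbourhoods), and it is correct in outline. So there is nothing to compare against beyond noting that you have reproduced the textbook proof the paper chose to cite rather than repeat.
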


Let $\gamma:(a,b)\rightarrow M$ be a timelike (causal) curve, the point $p$ is called a future (resp. past) endpoint of $\gamma$ if the limit $\lim\limits_{t\rightarrow b^-}\gamma(t)$ (resp. $\lim\limits_{t\rightarrow a^+}\gamma(t)$) exists and equals to $p$. A timelike (causal) curve is called future (resp. past) inextendible if it has no future (resp. past) endpoint. We call a timelike (causal) curve $\gamma:(a,b)\rightarrow M$ inextendible if it is both future and past inextendible.

With the notion of inextendibility, we have
\begin{Def}\label{Def_line}
A timelike (causal) geodesic $\gamma:[a,b)\rightarrow M$ is a timelike (causal) ray if it is future inextendible and maximal; a timelike (causal) geodesic $\gamma:(a,b)\rightarrow M$ is a timelike (causal) line if it is inextendible and maximal. We call a timelike (causal) geodesic future complete if its affine parameter could be extended to $+\infty$.

Moreover, let $(\mathbb{R}^2,g)$ be the Abelian cover of a class A 2-torus $(\mathbb{T}^2,g)$, a timelike (causal) geodesic on $(\mathbb{T}^2,g)$ is called a timelike (causal) maximizer (ray, line) if any of its lift is a timelike (causal) maximizer (ray, line).
\end{Def}

On a non-compact spacetime, one could define functions which measure the distance to a point at infinity, now known as Lorentzian Busemann functions. For future use, we give the definition and its basic properties here.
\begin{Def}[\cite{Galloway}]
Let $(M,g)$ be a non-compact spacetime and $\gamma:[0,+\infty)\rightarrow M$ be a future complete timelike ray. The Lorentzian Busemann function $b_{\gamma}:M\rightarrow \R\cup\{\pm\infty\}$ associated to $\gamma$ is defined by
\begin{equation}\label{busemann}
b_{\gamma}(p)=\lim\limits_{s\rightarrow \infty}[s-d(p,\gamma(s))].
\end{equation}
\end{Def}

\begin{Pro}[\cite{Galloway}]\label{pro-buse}
Let $(M,g)$ be a non-compact spacetime and $\gamma:[0,+\infty)\rightarrow M$ a future complete timelike ray, then
\begin{enumerate}
  \item the limit in equation \eqref{busemann} always exists by the reverse triangle inequality \eqref{rt};
  \item $b_{\gamma}(p)<+\infty$ if and only if $p\in I^-(\gamma)$, if $p\in I^+(\gamma(0))\cap I^-(\gamma)$, $b_{\gamma}(p)\geq d(\gamma(0),p)>0$;
  \item for any $p\leq q$ in $I^-(\gamma)$, we have
        \begin{equation}\label{bxy}
        b_{\gamma}(q)-b_{\gamma}(p)\geq d(p,q).
        \end{equation}
\end{enumerate}
\end{Pro}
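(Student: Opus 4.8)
The plan is to deduce all three items directly from the reverse triangle inequality \eqref{rt} together with the only additional fact we may use about $\gamma$: being a future complete, maximal timelike ray parametrized by arclength, it satisfies $d(\gamma(s_1),\gamma(s_2))=s_2-s_1$ whenever $0\le s_1\le s_2$ (this is exactly where maximality enters). Throughout write $f_p(s):=s-d(p,\gamma(s))$, so that $b_\gamma(p)=\lim_{s\to\infty}f_p(s)$ in the notation of \eqref{busemann}.

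For item (1) I would split into two cases. If $p\notin I^-(\gamma)$, then $\gamma(s)\notin I^+(p)$ for every $s\ge 0$, hence $d(p,\gamma(s))=0$ and $f_p(s)=s\to+\infty$, so the limit exists and equals $+\infty$; this already yields the ``$p\notin I^-(\gamma)\Rightarrow b_\gamma(p)=+\infty$'' half of item (2). Otherwise $p\ll\gamma(s_0)$ for some $s_0\ge 0$; then for $s_0\le s_1\le s_2$ we have $p\ll\gamma(s_1)\le\gamma(s_2)$, and applying \eqref{rt} to $p\le\gamma(s_1)\le\gamma(s_2)$ together with $d(\gamma(s_1),\gamma(s_2))=s_2-s_1$ gives $d(p,\gamma(s_2))\ge d(p,\gamma(s_1))+(s_2-s_1)$, i.e. $f_p(s_2)\le f_p(s_1)$. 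Thus $f_p$ is non-increasing on $[s_0,\infty)$, so $\lim_{s\to\infty}f_p(s)=\inf_{s\ge s_0}f_p(s)$ exists in $[-\infty,f_p(s_0)]$.

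For the rest of item (2): if $p\ll\gamma(s_0)$ then $b_\gamma(p)\le f_p(s_0)=s_0-d(p,\gamma(s_0))<s_0<+\infty$ since $d(p,\gamma(s_0))>0$, which gives the ``$p\in I^-(\gamma)\Rightarrow b_\gamma(p)<+\infty$'' direction. For the lower bound, suppose $p\in I^+(\gamma(0))\cap I^-(\gamma)$; then $\gamma(0)\ll p$, and for all large $s$ also $p\ll\gamma(s)$, so \eqref{rt} applied to $\gamma(0)\le p\le\gamma(s)$ gives $s=d(\gamma(0),\gamma(s))\ge d(\gamma(0),p)+d(p,\gamma(s))$, that is $f_p(s)\ge d(\gamma(0),p)$; letting $s\to\infty$ yields $b_\gamma(p)\ge d(\gamma(0),p)>0$.

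Finally, for item (3), let $p\le q$ with $p,q\in I^-(\gamma)$; by item (2) both $b_\gamma(p)$ and $b_\gamma(q)$ are finite. Choose $s$ large enough that $q\ll\gamma(s)$ (possible since $q\in I^-(\gamma)$); then $p\le q\le\gamma(s)$ and \eqref{rt} gives $d(p,\gamma(s))\ge d(p,q)+d(q,\gamma(s))$, hence $f_p(s)\le f_q(s)-d(p,q)$. Passing to the limit $s\to\infty$ produces $b_\gamma(p)\le b_\gamma(q)-d(p,q)$, which is \eqref{bxy}. I do not anticipate a genuine obstacle here; the only points requiring care are the bookkeeping of the value $+\infty$ and the implicit unit-speed normalization of $\gamma$ — if one did not assume arclength parametrization, one would simply phrase everything through the arclength function along $\gamma$, changing nothing essential.
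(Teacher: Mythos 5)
Your proof is correct and is the standard argument (monotonicity of $s\mapsto s-d(p,\gamma(s))$ from the reverse triangle inequality \eqref{rt} plus the unit-speed maximality identity $d(\gamma(s_1),\gamma(s_2))=s_2-s_1$); the paper itself gives no proof of this proposition, citing \cite{Galloway}, and your argument matches the one found there. Nothing further is needed beyond the $\pm\infty$ bookkeeping you already flag.
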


Level sets of $b_{\gamma}$ are called horospheres, they are especially useful when studying geometrical behaviors of timelike rays or lines.
\begin{Def}\label{horo}
Let $(M,g)$ be a non-compact spacetime and $\gamma:[0,+\infty)\rightarrow M$ a future complete timelike ray, for $p\in I^-(\gamma)$, set
\begin{equation*}
K_{\infty}(p,\gamma):=\{x\in I^{-}(\gamma): b_{\gamma}(x)=b_{\gamma}(p)\}
\end{equation*}
and call it the horoshpere through $p$ with central timelike ray $\gamma$.
\end{Def}

\subsection{Previous results on class A 2-tori}
Let $(M^2,g)$ be a connected, closed orientable two dimensional spacetime and $g_{R}$ an auxiliary complete Riemannian metric on $M^{2}$. It's well known that $M^2$ is diffeomorphic to $\mathbb{T}^2$, we denote by $\|\cdot\|$ and dist$_{\|\cdot\|}(\cdot,\cdot)$ the stable norm associated to $g_R$ (see \cite{Burago} for its definitions) and the metric induced by $\|\cdot\|$ on $H_{1}(\mathbb{T}^2,\mathbb{R})$.

Now we recall the definition of asymptotic directions: we think of $H_{1}(\mathbb{T}^2,\mathbb{R})$ as a vector space and set $S^1:=\{h|h\in H_1(\mathbb{T}^2,\mathbb{R}),\|h\|=1)\}$. Denote by $\overline{\alpha}$ the unique half line emanating from the origin and containing the vector $\alpha\in S^1$ in $H_1(\mathbb{T}^2,\mathbb{R})$.

\begin{Def}\label{Def_a.d.}
Let $\gamma:I\rightarrow \mathbb{T}^2$ be a future inextendible causal curve. We say $\gamma$ or its lift $\tilde{\gamma}$ to $\R^{2}$ has an asymptotic direction $\alpha$ if there exist constant $D(\gamma,g,g_R)>0$ and $\alpha\in S^1$ such that
\begin{equation*}
\text{dist}_{\|\cdot\|}(\tilde{\gamma}(t)-\tilde{\gamma}(s),\overline{\alpha})\leq D(\gamma,g,g_R)
\end{equation*}
for all $[s,t]\subseteq I$, where $\tilde{\gamma}(t)-\tilde{\gamma}(s)\in H_1(\mathbb{T}^2,\mathbb{R})$. We call $\alpha$ rational if $\overline{\alpha}\cap H_1(\mathbb{T}^2,\mathbb{Z})_\mathbb{R}$ is non-empty.
\end{Def}

Since $(\T^2,g)$ is a spacetime, the Lorentzian metric $g$ naturally gives rise to two future directed lightlike $C^{\infty}$ vector fields $X^{\pm}$ and we choose $\{X^-(p),X^+(p)\}$ to be positively oriented at each point $p\in\mathbb{T}^{2}$, see \cite{Su2}. The integral curves of $X^{+}$ (resp. $X^{-}$) are lightlike and, by elementary foliation theory on surfaces, they all have a same asymptotic direction $m^{+}$ (resp. $m^{-}$). The first definition of class A 2-tori was given by E. Schelling and recently proved to be equivalent to Definition \ref{class A}:

\begin{Pro}[{\cite{Sc, Su2}}]\label{eq class A}
Let $(\mathbb{T}^2,g)$ be a Lorentzian 2-torus, then it is of class A if and only if $m^{+}\neq m^{-}$.
\end{Pro}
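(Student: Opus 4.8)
The plan is to prove the two implications separately, working throughout on the Abelian cover $(\R^2,g)$ and using two facts recalled just above: the lifted lightlike foliations $\tilde{\mathcal F}^{\pm}$ (integral curves of the lifts of $X^{\pm}$) have \emph{every} leaf contained in a strip of some uniform $g_R$-width $D$ about a straight line in direction $m^{\pm}$ --- $D$ being uniform because $X^{\pm}$ descend to the compact $\T^2$ --- and each leaf is a properly embedded line, since leaves of a foliation of $\R^2$ are proper lines. A preliminary remark used in both directions: in dimension two a future directed causal curve in $(\R^2,g)$ is monotone along each of $\tilde{\mathcal F}^{\pm}$, because its velocity written in the positively oriented null frame $\{X^-,X^+\}$ has both components $\ge 0$ and so never decreases the transverse leaf-parameter of either foliation. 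Since leaves are non-closed this already forbids closed causal curves, and, using the two leaves through a point to build causally convex neighbourhoods, it gives strong causality of $(\R^2,g)$.

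\emph{From $m^+\neq m^-$ to class A.} Fix $\bar p\in\R^2$. The future null geodesics $\eta^{\pm}$ from $\bar p$ are leaves of $\tilde{\mathcal F}^{\pm}$, hence stay within $D$ of the rays issued from $\bar p$ in the directions $m^{\pm}$; $J^+(\bar p)$ is the closed region they bound on the future side, and one checks it cannot reach far off to the side of these rays, so $J^+(\bar p)$ is contained in the cone $W^+(\bar p)$ obtained by enlarging the sector from $m^-$ to $m^+$ by $D$ and pushing its vertex back. The orientation of $\{X^-,X^+\}$ forces this sector to have angular width strictly less than $\pi$, so for $m^+\neq m^-$ the cone $W^+(\bar p)$ is a proper convex cone containing no line; symmetrically $J^-(\bar q)\subseteq W^-(\bar q)$, the vertex-at-$\bar q$ cone with the antipodal direction-arc. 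For such a cone, two ``opposite'' translates meet in a bounded set: if $x\in W^+(\bar p)\cap W^-(\bar q)$ then $x-\bar p$ and $\bar q-x$ both lie in it, and evaluating the two linear functionals vanishing on its boundary rays confines $x$ to a bounded box. Hence every causal diamond $J^+(\bar p)\cap J^-(\bar q)$ is relatively compact, which with strong causality yields global hyperbolicity of $(\R^2,g)$ by the classical characterization (see \cite{B-E-E}). Total viciousness then follows because the same estimate shows $\bar p+t\alpha\in J^+(\bar p)$ for every $\alpha$ in the open sector between $m^-$ and $m^+$ and all large $t$: choosing $k\in\Z^2\setminus\{0\}$ with direction in this open sector we get $\bar p\ll\bar p+Nk$ for large $N$, so a timelike curve from $\bar p$ to $\bar p+Nk$ descends to a timelike loop through $\pi(\bar p)$; as $\bar p$ is arbitrary, $(\T^2,g)$ is class A.

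\emph{From class A to $m^+\neq m^-$.} Here I would prove the contrapositive: if $m^+=m^-=:m$ then $(\T^2,g)$ is not class A. Now the two null geodesics $\eta^{\pm}$ from any $\bar p$ are asymptotic to the \emph{same} ray, so $J^+(\bar p)$ --- and symmetrically $J^-(\bar p)$ --- is confined to a single half-strip of bounded width in direction $m$. Assuming for contradiction that $(\T^2,g)$ is class A, global hyperbolicity provides maximizing timelike geodesics between causally related points and total viciousness provides, at every $\bar p$, some $k\neq 0$ with $\bar p\ll\bar p+k$; all such $k$ are then forced into a bounded neighbourhood of the single ray $\bar m$. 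The plan is to exploit this rigidity together with the $\Z^2$-action --- the timelike rays and lines one extracts are too tightly constrained to one direction to coexist with both the compactness of causal diamonds and the abundance of timelike loops that total viciousness demands.

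\emph{Main obstacle.} The first implication rests on one clean geometric lemma --- the null geodesics confine $J^{\pm}(\bar p)$ to proper convex cones --- plus the routine packaging ``strongly causal $+$ relatively compact diamonds $\Rightarrow$ globally hyperbolic'', which I would simply quote from \cite{B-E-E}; I would also need to nail down the orientation input that the sector from $m^-$ to $m^+$ has width $<\pi$, so that the bare inequality $m^+\neq m^-$ already excludes both the degenerate case and the half-plane case. The genuine difficulty is the second implication: pinning down \emph{which} defining property of class A is destroyed when $m^+=m^-$, and converting the ``confinement to a single direction'' heuristic into a rigorous contradiction.
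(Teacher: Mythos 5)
First, note that the paper does not prove this proposition at all: it is imported verbatim from \cite{Sc} and \cite{Su2}, so there is no in-paper argument to compare yours against. Judged on its own terms, your proposal is not yet a proof, and the decisive gap is the one you yourself flag: the implication ``class A $\Rightarrow m^+\neq m^-$'' is never argued. You state a plan (``exploit this rigidity together with the $\Z^2$-action'') but do not produce the contradiction, and the confinement heuristic does not obviously yield one. If $m^+=m^-=:m$, then $J^+(\bar p)$ would be a half-strip in direction $m$ and $J^-(\bar q)$ a half-strip in direction $-m$, so the diamonds $J^+(\bar p)\cap J^-(\bar q)$ are still bounded, and timelike loops in homotopy classes close to $m$ are not visibly excluded; neither defining property of class A is contradicted by this picture alone. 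The actual obstruction in the literature comes from the finer structure of the lightlike foliations (closed lightlike leaves and Reeb components forcing either causality violations upstairs or the failure of total viciousness), not from a diamond-compactness count, and that analysis is entirely missing here.

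The forward direction also has two unresolved points that are not mere bookkeeping. First, your reduction to ``proper convex cones'' needs the arc from $m^-$ to $m^+$ to have width strictly less than $\pi$, i.e.\ $m^+\neq -m^-$; the pointwise positive orientation of $\{X^-(p),X^+(p)\}$ constrains the directions of the vector fields at each point, not the asymptotic directions of their integral curves, so this does not follow ``from the orientation'' and is essentially as deep as the proposition itself. Second, the premise that every lightlike leaf stays within a uniform $g_R$-distance $D$ of a straight line fails in general: a foliation of $\T^2$ by lines may contain Reeb components, whose lifted interior leaves have both ends escaping in the same direction and hence admit no asymptotic direction in the sense of Definition 2.10; ruling this out is part of what must be established before the cone estimate can start. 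Finally, for total viciousness you need the reverse inclusion (that the open sector between $m^-$ and $m^+$, suitably truncated, lies inside $I^+(\bar p)$), which is not ``the same estimate'' as the containment $J^+(\bar p)\subseteq W^+(\bar p)$ you set up; it requires identifying $I^+(\bar p)$ with the open region bounded by the two lightlike leaves through $\bar p$ and then using the bounded deviation from below.
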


Let $(\mathbb{T}^2,g)$ be a class A 2-torus, we set $\mathfrak{T}\subseteq H_1(\mathbb{T}^2,\mathbb{R})$ to be the convex hull of $\overline{m}^-\cup \overline{m}^+$ and call it the stable time cone of $(\mathbb{T}^2,g)$. Given the usual orientation (i.e., the counter-clockwise orientation is positive) on $H_1(\mathbb{T}^2,\mathbb{R})\cong\mathbb{R}^2$, we define an order on $S^1\cap\mathfrak{T}$ by: for any $\alpha,\beta\in S^1\cap\mathfrak{T}$,  $\alpha<\beta$ if and only if $\{\alpha,\beta\}$ is positively oriented. Then $(S^1\cap \mathfrak{T},<)$ is isomorphic to a closed interval which we denote by $[m^-,m^+]$, where $m^-<m^+$ are endpoints of $S^1\cap\mathfrak{T}$. Define $(m^-,m^+):=[m^-,m^+]\backslash\{m^{\pm}\}$. In sense of the following lemma, $[m^-,m^+]$ is called the set of causal asymptotic directions and we call $(m^-,m^+)$ the set of timelike asymptotic directions.

\begin{Lem}[{\cite{Sc, Su1}}]\label{thm_Schelling}
Let $(\R^2,g)$ be the Abelian cover of a class A Lorentzian 2-torus, then
\begin{enumerate}
  \item any asymptotic direction of a causal curve (if it has) falls in $[m^-,m^+]$,
  \item for every $\alpha\in[m^-,m^+]$ there exist causal lines with asymptotic direction $\alpha$ and every causal line has an asymptotic direction $\alpha\in[m^-,m^+]$.
\end{enumerate}
\end{Lem}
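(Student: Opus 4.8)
The plan is to prove the two assertions with a common set of tools: the two lightlike foliations $\mathcal{F}^{\pm}$ generated by $X^{\pm}$ together with their asymptotic directions $m^{\pm}$, the global hyperbolicity of $(\R^2,g)$ (Proposition~\ref{globally-hyperbolic}), and the co-compactness of the $\Gamma$-action. For assertion (1) the key observation is that on a two dimensional spacetime a null vector spans its own $g$-orthogonal complement, so $g(\dot\sigma,\dot\sigma)\equiv 0$ forces $\nabla_{\dot\sigma}\dot\sigma\parallel\dot\sigma$; hence every lightlike curve is a lightlike pregeodesic, and for each $p\in\R^2$ the boundary $\partial J^{+}(p)$ is exactly the union of the future leaves of $\mathcal{F}^{+}$ and $\mathcal{F}^{-}$ through $p$ (and symmetrically for $\partial J^{-}(p)$). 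Thus for $s<t$ an arc $\tilde\gamma|_{[s,t]}$ of a future inextendible causal curve is trapped in the compact causal diamond $J^{+}(\tilde\gamma(s))\cap J^{-}(\tilde\gamma(t))$, which is bounded by four lightlike leaf arcs; since every leaf of $\mathcal{F}^{\pm}$ stays within a fixed $\|\cdot\|$-distance of a straight line of direction $m^{\pm}$, this diamond lies in a bounded neighborhood of the flat parallelogram with the same vertices spanned by $\overline{m}^{-}$ and $\overline{m}^{+}$, so there is a bounded set $B$ with $\tilde\gamma(t)-\tilde\gamma(s)\in\mathfrak{T}+B$ for all $s<t$. Because a future inextendible causal curve leaves every compact subset of $(\R^2,g)$, the vectors $\tilde\gamma(t)-\tilde\gamma(s_0)$ tend to infinity; if $\tilde\gamma$ in addition has an asymptotic direction $\alpha$ they remain within a fixed distance of $\overline{\alpha}$, and since $\mathfrak{T}$ is a closed cone one has $\text{dist}_{\|\cdot\|}(r\alpha,\mathfrak{T})=r\,\text{dist}_{\|\cdot\|}(\alpha,\mathfrak{T})$, so a ray staying within bounded distance of $\mathfrak{T}$ must be contained in it; hence $\overline{\alpha}\subseteq\mathfrak{T}$, i.e., $\alpha\in[m^{-},m^{+}]$.

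The substance of (2) is a Morse-type estimate, which I expect to be the main obstacle: there should be a constant $D=D(g,g_R)$ such that every maximal causal segment stays within $\|\cdot\|$-distance $D$ of the straight segment joining its endpoints. I would first prove that two \emph{distinct} maximal causal lines $\gamma_1,\gamma_2$ meet at most once. Indeed, if they met at $P$ and $Q$, then since $(\R^2,g)$ is causal one checks that $P$ and $Q$ occur in the same order along both, so $P\le Q$ along each; splicing the arcs $\gamma_i|_{PQ}$ and using additivity of $L^{g}$ shows the resulting causal curve is again a maximizer, hence a smooth causal geodesic, which forces $\gamma_1$ and $\gamma_2$ to be tangent at both $P$ and $Q$ and therefore equal, a contradiction. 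A Hedlund-type argument then combines this single-intersection property with co-compactness of $\Gamma$ — the translates $T_k\gamma$ of a maximal line are again maximal lines, so they are forced to lie on essentially two sides of $\gamma$, and the strip between the extreme ones has width bounded over a fundamental domain — to yield the uniform constant $D$; this is the delicate point, where co-compactness and the strict inequality $m^{-}\neq m^{+}$ must be used together. Granting the estimate, every maximal causal line $c$ has an asymptotic direction: the directions of the chords $\overline{c(0)c(t)}$ form a Cauchy net as $t\to\pm\infty$, because $c(t)$ lies within $D$ of the longer chord $\overline{c(0)c(2t)}$ while $|c(2t)-c(0)|\to\infty$; they converge to some $\alpha$, $c$ stays within bounded distance of $c(0)+\R\alpha$, and assertion (1) places $\alpha$ in $[m^{-},m^{+}]$.

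Finally, to realize a prescribed $\alpha\in[m^{-},m^{+}]$ by a causal line, the endpoint directions are immediate: a leaf of $\mathcal{F}^{\pm}$ is an inextendible maximal causal geodesic — any two of its points lie on each other's $\partial J^{\pm}$, so the lightlike geodesic between them has $L^{g}=0=d$ — hence a causal line with asymptotic direction $m^{\pm}$. For $\alpha\in(m^{-},m^{+})$ the ray $\overline{\alpha}$ lies in the interior of $\mathfrak{T}$, so the points $x_n:=-t_n\alpha$ and $y_n:=t_n\alpha$ satisfy $x_n\le y_n$ once $t_n$ is large; taking maximizers $\sigma_n$ from $x_n$ to $y_n$ (Proposition~\ref{globally-hyperbolic}), translating by suitable $T_{k_n}\in\Gamma$ so that a chosen point of $\sigma_n$ stays in a fixed fundamental domain, and applying the limit curve theorem (see \cite{B-E-E}) yields, along a subsequence, an inextendible causal curve which is maximal because $C^{0}$-limits of maximizers in a globally hyperbolic spacetime are maximizers — that is, a causal line $c$. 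The uniform constant $D$ keeps each $T_{k_n}\sigma_n$ within $D$ of its chord, whose direction tends to $\alpha$, so $c$ lies within bounded distance of a line of direction $\alpha$ and therefore has asymptotic direction $\alpha$. Combining the three parts proves Lemma~\ref{thm_Schelling}.
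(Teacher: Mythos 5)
The paper itself offers no proof of Lemma~\ref{thm_Schelling}: it is quoted directly from Schelling's thesis \cite{Sc} and Suhr's work \cite{Su1}, so there is no internal argument to measure yours against. Your outline does follow the same Morse--Hedlund strategy that those references use. Part (1) is essentially complete (and can be simplified: you only need that $J^{+}(\tilde\gamma(s_0))$ is trapped between the two null leaves through $\tilde\gamma(s_0)$, each of which stays within bounded $\|\cdot\|$-distance of a line of direction $m^{\pm}$, plus non-imprisonment of inextendible causal curves in the strongly causal spacetime $(\R^2,g)$; the possible non-uniformity of the leaf constants over $s,t$ that your ``parallelogram'' phrasing glosses over is harmless once you fix $s=s_0$). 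The realization of $m^{\pm}$ by null leaves and the limit-curve construction for $\alpha\in(m^-,m^+)$ are likewise the standard route.

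The genuine gap is the one you flag yourself: the uniform Morse estimate (every causal maximizer stays within a distance $D=D(g,g_R)$ of its chord) is the actual mathematical content of \cite{Sc,Su1}, and your sketch of it is not a proof. The single-intersection property of maximal lines together with cocompactness of $\Gamma$ does not by itself bound ``the width of the strip between the extreme translates'': the standard argument first produces closed (periodic) maximizers in every integral homology class in the interior of $\mathfrak{T}$ -- this is where total viciousness and $m^{-}\neq m^{+}$ enter, cf.\ \cite{Su2} -- and then uses the grid of their $\Gamma$-translates as barriers confining an arbitrary maximizer; in addition, uniformity of the constant as the chord direction approaches $m^{\pm}$ needs separate care. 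The same issue resurfaces in your final step, where ``every causal line has an asymptotic direction'' is deduced from the estimate applied to the line's own chords. None of this is a wrong approach -- it is precisely the route of the cited references -- but as written the key quantitative step is assumed rather than established, so the argument is an outline of the known proof rather than a self-contained one.
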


Let $\gamma$ be an inextendible causal curve on $(\mathbb{R}^2,g)$, then $\R^2\backslash Im(\gamma)=U^-\cup U^+$, where $U^\pm$ are two connected components of $\R^2\backslash Im(\gamma)$ that are diffeomorphic to $\R^2$. One of these components, say $U^{+}(U^{-})$ satisfies the following condition: for every spacelike smooth curve $\xi$ that initiates from $p=\gamma(0)$ such that $\xi(t),t>0$ is contained in $U^{+}$ (resp. $U^{-}$), $\{\dot{\gamma}(0),\dot{\xi}(0)\}$ is positively (resp. negatively) oriented.

By the above discussions which strongly rely on the topological structure of the plane, we define a useful relation between an inextendible causal curve and a point outside, see also \cite[Definition 1.11]{Jin-Cui1}.
\begin{Def}\label{def_point_line}
We say a point $p\in\R^2\backslash Im(\gamma)$ satisfies $p>\gamma(p<\gamma)$ if and only if $p\in U^+(p\in U^-)$.
\end{Def}

Now we recall some elementary properties of timelike rays and lines on the Abelian cover of a class A 2-torus, which were discussed in the paper \cite{Jin-Cui1} and will be frequently used in this paper. We list them in
\begin{Lem} \label{lem_I_gamma}
Let $(\R^2,g)$ be the Abelian cover of a class A Lorentzian 2-torus, then
\begin{enumerate}
  \item Two inextendible causal lines in $(\R^2,g)$ with different asymptotic directions intersect each other exactly once.
  \item Every timelike line $($resp. ray$)$ $\gamma:I\rightarrow (\mathbb{R}^2,g)$ with asymptotic direction $\alpha\in(m^-,m^+)$ is complete $($resp. future complete$)$.
  \item Every timelike line $\gamma:\R\rightarrow (\R^2,g)$ with asymptotic direction $\alpha\in(m^-,m^+)$ satisfies
        \begin{eqnarray*}
        I^+(\gamma)&=&\bigcup\limits_{k\in\N}I^+(\gamma(-k))=\R^2, \\
        I^-(\gamma)&=&\bigcup\limits_{k\in\N}I^-(\gamma(k))=\R^2.
        \end{eqnarray*}
\end{enumerate}
\end{Lem}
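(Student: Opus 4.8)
The final statement to prove is Lemma 2.14 (labeled \texttt{lem\_I\_gamma}), which collects three facts about timelike rays and lines on the Abelian cover $(\R^2,g)$ of a class A Lorentzian 2-torus.

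\medskip

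\textbf{Plan of proof.}

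\emph{Part (1): two inextendible causal lines with different asymptotic directions meet exactly once.} The plan is to combine a topological separation argument with the causal structure. First I would use the fact that $(\R^2,g)$ is globally hyperbolic, so $\R^2$ is diffeomorphic to a plane and the image of each inextendible causal line $\gamma$ is a properly embedded line separating $\R^2$ into the two components $U^+(\gamma)$ and $U^-(\gamma)$ described just before Definition~\ref{def_point_line}. To get \emph{at least one} intersection point: let $\gamma_1,\gamma_2$ be the two lines with distinct asymptotic directions $\alpha_1\neq\alpha_2$ in $[m^-,m^+]$. If they were disjoint, $\gamma_2$ would lie entirely in $U^+(\gamma_1)$ or $U^-(\gamma_1)$, and then I would derive a contradiction from Definition~\ref{Def_a.d.}: the asymptotic directions being distinct forces the two lifted curves to drift apart at linear rate in the stable norm, but being trapped in a single half-plane bounded by a curve of asymptotic direction $\alpha_1$ is incompatible with having a different asymptotic direction $\alpha_2$ while remaining causal (by Lemma~\ref{thm_Schelling} all causal asymptotic directions lie in $[m^-,m^+]$ and a line stranded on one side of $\gamma_1$ cannot accumulate to the far boundary ray). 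To get \emph{at most one} intersection: if $\gamma_1$ and $\gamma_2$ met at two distinct points $x\leq y$ (after relabeling, since each is causal), then each of the two sub-arcs between $x$ and $y$ is causal and maximal, so $d(x,y)=L^g(\gamma_i|_{[x,y]})$ for $i=1,2$; a causal maximizer between two points on a globally hyperbolic spacetime is a geodesic with no conjugate points, and two distinct maximizing causal geodesics sharing both endpoints in a 2-dimensional globally hyperbolic spacetime bound a "causal bigon" which can be excluded by a first-variation/second-variation argument (or directly: the two arcs would have to coincide as geodesics once we note the strong causality and the absence of cut points strictly before $y$), contradicting distinct directions. The main obstacle in Part (1) is making the "trapped on one side" contradiction fully rigorous; I expect to invoke Lemma~\ref{thm_Schelling}(1) together with a careful bookkeeping of which boundary ray ($\overline{m}^\pm$) the stranded line must follow.

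\emph{Part (2): completeness for asymptotic direction $\alpha\in(m^-,m^+)$.} Here the plan is to argue by contradiction using maximality and the compactness of $\T^2$. Suppose $\gamma:[0,a)\to(\R^2,g)$ is a future-inextendible timelike maximizer with $a<\infty$ in the affine parameter. Since $\alpha\in(m^-,m^+)$ is strictly timelike, I would normalize by arclength and show that the $g_R$-length (hence the affine-parameter length, after relating the two via compactness of the unit timelike bundle over $\T^2$) cannot be finite while the curve is inextendible: a future-inextendible causal curve in the globally hyperbolic plane must leave every compact set, but its projection to $\T^2$ returns to a compact region, so along $\gamma$ the metric quantities $-g(\dot\gamma,\dot\gamma)$ and $g_R(\dot\gamma,\dot\gamma)$ are comparable up to a uniform constant; the asymptotic-direction bound in Definition~\ref{Def_a.d.} then forces the $g_R$-length to be infinite, so $a=\infty$. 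For the two-sided (line) case one repeats the argument toward the past. This is essentially the standard "no incomplete inextendible causal geodesic that stays causally well-behaved" argument adapted to the class A setting; I anticipate the only delicate point is the uniform comparability of $g$- and $g_R$-speeds, which follows by pulling everything down to the compact torus.

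\emph{Part (3): $I^\pm(\gamma)=\R^2$ for a timelike line with $\alpha\in(m^-,m^+)$.} By Part (2) such a line is complete, so $\gamma(k)$ is defined for all $k\in\Z$. The inclusions $\bigcup_{k}I^+(\gamma(-k))\subseteq I^+(\gamma)\subseteq\R^2$ are immediate (and $I^+(\gamma(-k))\subseteq I^+(\gamma(-k-1))$ by the reverse triangle inequality / transitivity of $\ll$), so the content is $\bigcup_{k\in\N}I^+(\gamma(-k))=\R^2$. Fix $q\in\R^2$. Using global hyperbolicity of $\R^2$ and Part~(1), I would first produce an inextendible timelike line $\sigma$ through $q$ with asymptotic direction some $\beta\in(m^-,m^+)$, $\beta\neq\alpha$ (such lines exist by Lemma~\ref{thm_Schelling}(2), and by shifting by a deck transformation in $\Gamma$ one can arrange it to pass through $q$, or invoke the construction directly); by Part~(1), $\sigma$ crosses $\gamma$ at a single point $\gamma(t_0)$. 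Since $\sigma$ is timelike and inextendible, $q$ lies either in $I^+(\gamma(t_0))$ or $I^-(\gamma(t_0))$ along $\sigma$, hence $q\in I^+(\gamma(t_0))\subseteq I^+(\gamma(-k))$ for $k$ large, or $q\in I^-(\gamma(t_0))$; in the first case we are done, and in the second we instead use $\sigma$ to land $q$ in $I^+$ of an earlier point of $\gamma$ by following $\sigma$ backward past $\gamma(t_0)$ — more carefully, one chooses the direction along $\sigma$ so that the crossing arc of $\sigma$ from a point of $\gamma$ to $q$ is future-directed, which is always possible because $\sigma$ is a two-sided timelike line and $\gamma$ has a different direction, so locally near the crossing $\gamma$ passes from $I^-(\sigma)$-side to $I^+(\sigma)$-side. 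The symmetric statement for $I^-$ is obtained by time reversal. The main obstacle in Part (3) is arranging the transversal line $\sigma$ through $q$ and checking the orientation of the connecting causal arc; I expect Definition~\ref{def_point_line} and Part~(1) to do most of the work here.

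\medskip

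Overall, the conceptual heart of the lemma is Part~(1) (exactly-once intersection), from which Parts (2) and (3) follow by fairly standard global-hyperbolicity arguments combined with the compactness of the quotient torus. I would present Part~(1) first, then Part~(2), then deduce Part~(3).
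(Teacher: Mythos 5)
This lemma is not proved in the paper: it is stated as a list of facts imported from \cite{Jin-Cui1} (see the sentence introducing Lemma \ref{lem_I_gamma}), so there is no in-paper argument to compare your sketch against and I can only assess the sketch on its own terms. Your outline for part (1) is essentially right: Jordan separation plus the bounded drift in Definition \ref{Def_a.d.} forces at least one crossing, and a second crossing would place a common endpoint at or beyond a cut point along each line, contradicting maximality of the lines beyond that point. The serious problem is part (2). The step you yourself flag as delicate --- ``uniform comparability of $g$- and $g_R$-speeds \dots follows by pulling everything down to the compact torus'' --- is false: the unit future observer bundle $T_{-1}\T^2$ is non-compact, unit timelike vectors can have arbitrarily large $g_R$-norm (they can be arbitrarily close to the light cone), so compactness of $\T^2$ gives no such bound, and an inextendible timelike curve of infinite $g_R$-length can perfectly well have finite Lorentzian length. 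Proving the $g_R$-length is infinite therefore says nothing about the affine parameter. The argument that works uses the hypothesis your sketch never invokes, namely that $\gamma$ is \emph{maximal}: parametrized by Lorentzian arclength, the affine length of $\gamma|_{[0,t]}$ equals $d(\gamma(0),\gamma(t))$, and one shows $d(\gamma(0),\gamma(t))\to\infty$ by chaining deck transformations $T_k$ with $k$ in the interior of the stable time cone ($\inf_x d(x,T_kx)>0$ by total viciousness and compactness of $\T^2$, and the asymptotic direction $\alpha\in(m^-,m^+)$ guarantees $\gamma(t)$ eventually enters $I^+(T_{nk}\gamma(0))$ for every $n$), then applies the reverse triangle inequality \eqref{rt}.

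Part (3) also has a soft spot. Since $\sigma$ meets $\gamma$ exactly once by part (1), ``following $\sigma$ backward past $\gamma(t_0)$'' produces no second crossing: a single auxiliary curve through $q$ certifies only one of $q\in I^-(\gamma)$ or $q\in I^+(\gamma)$, depending on whether the crossing lies in the future or the past of $q$ along $\sigma$. You need \emph{two} auxiliary timelike curves from $q$ --- a future ray and a past ray --- with asymptotic directions chosen on the appropriate side of $\alpha$ according to which component $U^{\pm}(\gamma)$ of Definition \ref{def_point_line} contains $q$, so that the future ray crosses $\gamma$ at a point of $I^+(q)$ (giving $q\in I^-(\gamma)$) and the past ray crosses $\gamma$ at a point of $I^-(q)$ (giving $q\in I^+(\gamma)$). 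Moreover, such rays through an \emph{arbitrary} point $q$ with prescribed interior direction cannot be obtained by applying deck transformations to the lines of Lemma \ref{thm_Schelling}(2), since $\Gamma$ is countable; their existence (e.g.\ as limits of maximizers from $q$ to $T_{nk}q$) is a separate class A fact that must be quoted or proved. With these two repairs the overall architecture --- (1) first, then (2), then (3) --- is the standard one.
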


\section{Proof of Theorem \ref{Thm1}}
This section is devoted to a proof of Theorem \ref{Thm1}. First, we observe that for every class A 2-torus $(\mathbb{T}^{2},g)$, $(\mathbb{T}^{2},-g)$ is also class A but with interchanged timelike and spacelike vectors. It is immediately from the observation that the timelike poles of $(\mathbb{T}^{2},-g)$ are the spacelike poles of $(\mathbb{T}^{2},g)$ and Theorem \ref{Thm1} remains true if we replace the notion of timelike pole by that of spacelike pole.

To begin our construction, let $(t,x)$ denote the usual linear coordinates on $\R^2$ and $(\dot{t},\dot{x})$ the coordinates in each tangent space with respect to the natural frame $\{\partial/\partial t,\partial/\partial x\}$. Fix a smooth function $f:\R\rightarrow (0,\infty)$, we define a Lorentzian metric on the plane by
\begin{equation}
g_f=-f^{2}(x)dt^{2}+dx^2.
\end{equation}
By definition, $(\R^2,g_f)$ is a spacetime oriented by $\partial/\partial t$.

Consider the unit future observer bundle $T_{-1}\R^2$ associated to $g_f$ (see Section 6, Definition \ref{Def_cutpoint} or \cite[Section 9.1]{B-E-E}), i.e., all future directed tangent vectors such that $f^{2}(x)\dot{t}^{2}-\dot{x}^2=1$. This means that each fiber of $T_{-1}\R^2$ is a branch of a hyperbola, so we could choose the parametrization $\dot{t}=\frac{\cosh\psi}{f(x)}>0,\dot{x}=\sinh\psi$ of the fiber of $T_{-1}\R^2$ at $(t,x)$ in terms of the so called hyperbolic angle $\psi\in\mathbb{R}$. Thus
\begin{equation*}
T_{-1}\R^2=\{(t,x;\frac{\cosh\psi}{f(x)},\sinh\psi)|(t,x)\in\R^{2},\psi\in\mathbb{R}\}.
\end{equation*}

Let $-\infty\leq a<b\leq+\infty$ and $\gamma:(a,b)\rightarrow(\R^2,g_f);\gamma(\tau):=(t_{\gamma}(\tau),x_{\gamma}(\tau))$ be a unit speed timelike inextendible geodesic on $(\R^2,g_f)$. Denote by $\dot{\gamma}$ the derivative of $\zeta$ with respect to $\tau$. Since $\gamma$ is future directed and unit speed, for any $\tau\in(a,b)$, $\dot{t}_{\gamma}(\tau)>0$ and $\dot{\gamma}(\tau)=(\dot{t}_{\gamma}(\tau),\dot{x}_{\gamma}(\tau))\in T_{-1}\R^2$, thus there is hyperbolic angle $\psi_\gamma(\tau)$ such that $\dot{\gamma}(\tau)=(\frac{\cosh\psi_\gamma(\tau)}{f(x_{\gamma}(\tau))},\sinh\psi_\gamma(\tau))$ or
\begin{equation}\label{E1}
\cosh\psi_\gamma(\tau)=f(x_{\gamma}(\tau))\dot{t}_{\gamma}(\tau).
\end{equation}
On the other hand, $\gamma$ satisfies the geodesic equation
\begin{equation*}
\frac{d^{2}t}{d\tau^{2}}+\frac{2f'(x)}{f(x)}\frac{dx}{d\tau}\frac{dt}{d\tau}=0,
\end{equation*}
since $f>0$, it follows that $f^{2}(x)\cdot\frac{dt}{d\tau}\equiv const$ along $\gamma$. Combining with $(\ref{E1})$, we obtain that for any $\tau\in(a,b)$,
\begin{equation}\label{E2}
f(x_{\gamma}(\tau))\cosh\psi_\gamma(\tau)\equiv const>0.
\end{equation}

We choose the positive orientation on $\R^2$ as before and state the following
\begin{Lem}\label{Lemma1}
Let $f$ be a smooth function with positive lower and upper bound and $(\R^2,g_f)$ be defined as above. If $p=(t_0, x_0)\in\R^2$ and $x_0$ is a maximal point of f, i.e., $f(x_0)=\sup\limits_{x\in\R} f(x)$, then $p$ is a timelike pole on $(\R^2,g_f)$.
\end{Lem}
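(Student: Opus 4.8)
The plan is to show that no future timelike geodesic starting at $p$ acquires a conjugate point, which for this warped-product metric should follow from an explicit analysis of the Jacobi equation together with the conservation law \eqref{E2}. First I would set up: let $\gamma:[0,a)\to(\R^2,g_f)$ be a unit-speed future timelike geodesic with $\gamma(0)=p=(t_0,x_0)$, and write $\dot\gamma(\tau)=(\cosh\psi_\gamma(\tau)/f(x_\gamma(\tau)),\sinh\psi_\gamma(\tau))$ as in the excerpt. The key structural fact is that the conserved quantity in \eqref{E2} is $c:=f(x_\gamma(\tau))\cosh\psi_\gamma(\tau)$, and since $x_0$ is a global maximum of $f$ we get $c=f(x_0)\cosh\psi_\gamma(0)$; combined with $f(x_\gamma(\tau))\le f(x_0)$ this forces $\cosh\psi_\gamma(\tau)\ge\cosh\psi_\gamma(0)$, i.e. $|\psi_\gamma(\tau)|\ge|\psi_\gamma(0)|$ for all $\tau$, with equality only where $f(x_\gamma(\tau))=f(x_0)$. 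Intuitively, as one moves away from $p$ the geodesic tilts ``more spacelike'', which is precisely the mechanism that prevents refocusing.

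Next I would pass to the Jacobi equation. In two dimensions a normal Jacobi field along $\gamma$ is scalar: writing $J(\tau)=u(\tau)E(\tau)$ for a parallel unit spacelike vector field $E$ along $\gamma$, the Jacobi equation becomes $u''+K(x_\gamma(\tau))\,u=0$, where $K$ is the Gaussian curvature of $g_f$. For the metric $g_f=-f^2(x)dt^2+dx^2$ a direct computation gives $K=-f''(x)/f(x)$ (I'd record this as a short lemma or cite it). A conjugate point to $p$ along $\gamma$ is a zero $\tau_1>0$ of the solution $u$ with $u(0)=0$, $u'(0)\neq 0$. So I must show: for every such $\gamma$, the solution of $u''-\frac{f''(x_\gamma(\tau))}{f(x_\gamma(\tau))}u=0$ with $u(0)=0$ has no further zero on $(0,a)$.

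The argument I expect to work is a Sturm-type / sign argument driven by the maximum condition. Because $x_0$ maximizes $f$, along $\gamma$ the function $\tau\mapsto x_\gamma(\tau)$ starts at the top; I would use the conservation law to control the second-order behavior of $f\circ x_\gamma$ and show that the ``potential'' $-f''(x_\gamma(\tau))/f(x_\gamma(\tau))$ has a sign, or at least an averaged sign, incompatible with oscillation of $u$. More concretely: one can differentiate \eqref{E2}, using the geodesic equation for the $x$-component, to get an identity for $\ddot x_\gamma$ and hence for $\frac{d^2}{d\tau^2}f(x_\gamma(\tau))$ in terms of $f$, $f''$ and $\dot x_\gamma$; feeding this back into the Jacobi equation should yield that $u$ and $u'$ cannot both return to a configuration producing a zero — the cleanest route is probably to exhibit a positive solution of the Jacobi equation (a ``Jacobi field with no zeros''), e.g. show that $w(\tau):=\partial_\psi x_\gamma(\tau)$ or a suitable combination coming from varying the conserved constant $c$ is a nowhere-vanishing Jacobi field, and then invoke Sturm comparison to conclude that the solution vanishing at $\tau=0$ has at most one zero. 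This is the step I regard as the main obstacle: identifying the correct explicit nonvanishing Jacobi field (or the correct monotone quantity) and verifying it has no zero for \emph{every} future timelike geodesic issuing from a maximum point of $f$, uniformly in the initial hyperbolic angle $\psi_\gamma(0)$.

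Finally, once the non-existence of conjugate points along every future timelike geodesic from $p$ is established, Definition (timelike pole) is satisfied verbatim and the lemma follows. I would also remark that completeness is not needed for the statement as phrased (geodesics $[0,a)\to M$), though the hypotheses that $f$ has positive lower and upper bounds guarantee the geodesics of interest are in fact defined on all of $[0,\infty)$, which is convenient for the later use of this lemma in the proof of Theorem \ref{Thm1}.
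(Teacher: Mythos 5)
Your plan stops exactly at the point where the lemma actually has to be proved. You reduce everything to showing that the scalar Jacobi equation $u''+\tfrac{f''(x_\gamma(\tau))}{f(x_\gamma(\tau))}u=0$ admits no second zero of the solution with $u(0)=0$, and you yourself flag the verification as ``the main obstacle''; nothing in the proposal closes it. The suggestion that the potential ``has a sign, or at least an averaged sign'' cannot work as stated: for the non-constant periodic $f$ used later in Theorem \ref{Thm1}, $f''$ necessarily changes sign along a generic geodesic, so no naive Sturm comparison against $u''=0$ applies. Your other candidate --- exhibiting a nowhere-vanishing Jacobi field --- is the right idea and is in fact available explicitly: since $\partial_t$ is a Killing field with $g(\partial_t,\dot\gamma)\equiv -c$ constant, its normal component is a Jacobi field of norm $f(x_\gamma)\left|\sinh\psi_\gamma\right|$, which never vanishes once one knows $\psi_\gamma(\tau)\neq 0$ for all $\tau$ (this follows from the conservation law \eqref{E2} together with $f(x_\gamma(\tau))\le f(x_0)$, exactly the observation you make); Sturm separation then finishes all geodesics with $\psi_\gamma(0)\neq 0$, and the vertical geodesic $\psi\equiv 0$ is handled directly because $f''(x_0)\le 0$ at the maximum. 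But none of this is in your write-up, so as it stands the proof is incomplete.

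For comparison, the paper's proof avoids the Jacobi equation entirely. It first notes timelike geodesic completeness (from the two-sided bounds on $f$, which you dismiss as unnecessary but which the paper does use), then argues globally: the conservation law $f(x_\gamma)\cosh\psi_\gamma\equiv\mathrm{const}$ forces $\psi_\gamma$ to keep its sign, hence $t_\gamma$ and $x_\gamma$ are strictly monotone, and if two distinct timelike geodesics from $p$ met again, comparing the conserved constants at the common $x$-value would give one inequality between the hyperbolic angles while the orientation of the crossing gives the opposite one. This shows no two timelike geodesics from $p$ re-intersect, which is a stronger (global, non-infinitesimal) statement than absence of conjugate points and is obtained with no curvature computation. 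If you want to keep your infinitesimal route, you must actually produce the nonvanishing Jacobi field as above; otherwise the argument has a genuine hole.
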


\begin{proof}
From (\ref{E1}) and (\ref{E2}), we know that for any unit speed timelike geodesic, both $\frac{dt}{d\tau}$ and $\frac{dx}{d\tau}$ are bounded, thus $(\mathbb{R}^{2},g_f)$ is timelike geodesically complete. All we need to prove is that any two distinct timelike geodesics emanating from $p$ will not intersect each other at any point except $p$. Since translations and reflections about $t$ are $g_f$-isometries, we only consider future parts of the geodesics emanating from $p$.

It follows from $x_0$ is a maximal point of $f$ that the curve $\gamma_{0}(\tau):=(t_{0}+\frac{\tau}{f(x_{0})},x_{0}),\tau\in\R$ is a unit speed timelike geodesic through $p$. Let $\gamma:[0,\infty)\rightarrow(\R^2,g_f)$ be a timelike geodesic with $\gamma(0)=p$ and $\psi_\gamma(0)\neq 0$, then $\psi_\gamma(\tau)\neq0$ for any $\tau\geq0$. Otherwise, $f(x_0)\cdot\cosh\psi_\gamma(0)=f(x_{\gamma}(\tau))\cdot\cosh\psi_\gamma(\tau)=f(x_{\gamma}(\tau))\leq f(x_0)$, so we have that $\psi_\gamma(0)=0$, this is a contradiction. Moreover, timelike geodesic completeness of $(\mathbb{R}^{2},g_f)$ implies $-\infty<\psi_\gamma(\tau)<\infty$ for a geodesic $\gamma$, thus $t_{\gamma}$ and $x_{\gamma}$ are strictly monotone functions about $\tau\in\mathbb{R}$.

Suppose there exists another timelike geodesic $\zeta:[0,\infty)\rightarrow(\R^2,g_f)$ which emanates from $p$ and intersects $\gamma$ at a point other than $p$. Let $\gamma(\tau_{0})=\zeta(\tau_{1})$ be the first intersection point along $\zeta$. Without loss of generality, we may assume that $\tau_{0},\tau_{1}>0, 0<\psi_\gamma(0)<\psi_{\zeta}(0)<+\infty$. By \eqref{E2} we have
\begin{equation*}
\begin{split}
f(x_\gamma(\tau_{0}))\cosh\psi_\gamma(\tau_{0})=f(x_0)\cosh\psi_\gamma(0),\\
f(x_\zeta(\tau_{1}))\cosh\psi_{\zeta}(\tau_{1})=f(x_0)\cosh\psi_{\zeta}(0).
\end{split}
\end{equation*}
Since $x_\gamma(\tau_{0})=x_\zeta(\tau_{1})$, $\psi_\gamma(0)<\psi_{\zeta}(0)$, we have that $\psi_\gamma(\tau_{0})<\psi_{\zeta}(\tau_{1})$.
On the other hand, the geodesic $\gamma$ intersects the geodesic $\zeta$ at $\gamma(\tau_{0})=\zeta(\tau_{1})$ transversely such that $\{\dot{\gamma}(\tau_{0}),\dot{\zeta}(\tau_{1})\}$ gives the negative orientation of $\mathbb{R}^{2}$. Then $\psi_\gamma(\tau_{0})>\psi_{\zeta}(\tau_{1})$, this contradiction completes the proof.
\end{proof}

Now we could finish the proof of Theorem \ref{Thm1}:
\begin{proof}[Proof of Theorem \ref{Thm1}]
For $0<\varepsilon< 1$, we construct a smooth function $f:\mathbb{R}\rightarrow[2^{-1},2]$ such that
\begin{itemize}
  \item $f$ is 1-periodic w.r.t $x$, i.e., $f(x+1)=f(x)$,
  \item $[-\frac{\varepsilon}{4},\frac{\varepsilon}{4}]\subseteq f^{-1}(2^{-1})$ and $[\frac{\varepsilon}{2},1-\frac{\varepsilon}{2}]\subseteq f^{-1}(2)$.
\end{itemize}
Then, by Lemma \ref{Lemma1}, the associated metric $g_f$ satisfies
\begin{enumerate}
  \item $\mathbb{Z}^{2}$ acts on $(\mathbb{R}^{2},g_f)$, i.e., for any $(i,j)\in\mathbb{Z}^{2}$, $T_{(i,j)}:\mathbb{R}^{2}\rightarrow\mathbb{R}^{2};(t,x)\mapsto(t+i,x+j)$ are $g_f$-isometries;
  \item at each point of the plane, there are two smooth lightlike curves of $g_f$ and have the form of $\{(t,x(t))|t\in\mathbb{R}\}$; one of their slopes is larger than $\frac{1}{2}$, the other is less than $-\frac{1}{2}$;
  \item those points whose $x$-coordinate belongs to $[\frac{\varepsilon}{2}+j,1-\frac{\varepsilon}{2}+j],j\in\mathbb{Z}$ are timelike poles of $(\mathbb{R}^{2},g_f)$.
\end{enumerate}

Let $\pi:\mathbb{R}^{2}\rightarrow\mathbb{T}^{2}$ be the canonical cover. By (1) above, $(\T^2,g_f):=(\R^2,g_f)/\Z^2$ is a non-flat Lorentzian 2-torus, where the non-flatness follows from that $f$ is not constant; by (2) and the fact that the class A condition for $(\T^2,g_f)$ is equivalent to $m^+\neq m^-$ (see Proposition \ref{eq class A}), $(\mathbb{T}^{2},g_{f})$ is of class A; the last item implies that the volume of timelike poles is no less than $(1-\varepsilon)\text{Vol}(\mathbb{T}^{2},g_{f})$. Thus $(\mathbb{T}^{2},g_{f})$ is what we want.
\end{proof}

\begin{Rem}
Let $\mathbb{T}_{0}^{n-2}$ be the standard flat Riemannian torus, the Lorentzian product of $(\mathbb{T}^{2},g_{f})$ and $\mathbb{T}_{0}^{n-2}$ gives an $n$-dimensional Lorentzian torus $(\mathbb{T}^{n},g)$ satisfying all conditions except the dimension restriction. It is easy to see that the above Lorentzian torus $(\T^n,g)$ is actually a class A spacetime in the sense of \cite{Su1}.
\end{Rem}

\section{$P$-Motions on globally hyperbolic planes}
In this section, we discuss certain special isometries on a globally hyperbolic plane $(M,g)$ (i.e., $M$ is homeomorphic to $\mathbb{R}^2$) and some of their properties, our method goes back to H. Busemann. In \cite{Busemann1, Busemann2, Busemann3}, H. Busemann used isometries to study the distributions of conjugate points along geodesics in the so called $G$-spaces, which are metric spaces sharing some axioms with Finsler manifolds. As we shall see, H. Busemann's methods can also be very effective in the study of globally hyperbolic spacetimes.

We begin by giving some general definitions and properties. As is well known, isometries or motions on a metric space are distance preserving maps of the space onto itself. Similarly, we have
\begin{Def}\label{Def_$P$-motion}
Let $(M,g)$ be a globally hyperbolic spacetime, a motion on $(M,g)$ is a distance preserving map $\phi$ of $M$ onto itself, i.e., for any $p,q\in M, d(\phi p,\phi q)=d(p,q)$. Furthermore, we call $\phi$ a $P$-motion if it is a motion and $p\ll\phi p$ for all $p\in M$.
\end{Def}

\begin{Rem}
Here and after, we note that
\begin{itemize}
  \item contrary to the Riemannian case, distance preserving maps between spacetimes are in general not diffeomorphisms, however, under the global hyperbolicity condition, they actually are, see \cite[Theorem 4.17]{B-E-E}. So motions on globally hyperbolic spacetimes are Lorentzian isometries.

  \item for $p\in M, S\subseteq M$ or a causal curve $\gamma$, we use $\phi^{k}p,\phi^{k}S$ and $\phi^{k}\gamma,k\in\Z$ to denote the corresponding objects under $\phi^{k}$, i.e., $k$ times composite of $\phi$.
\end{itemize}
\end{Rem}

For a motion $\phi$ and a point $p\in M$, let $I^\pm(\phi,p):=\bigcup_{i\in \Z}I^\pm(\phi^i p)$. Since $I^\pm(p)$ are open for any $p\in M$ and $I^{\pm}(\phi p)=\phi I^{\pm}(p)$, thus $I^\pm(\phi,p)$ are $\phi$-invariant nonempty open sets. In addition, if $\phi$ is a $P$-motion, then $\{\phi^i p\}_{i\in\Z}\subseteq I^+(\phi,p)\cap I^-(\phi,p)$. These notions lead to

\begin{Pro}\label{prop4-4}
Let $\phi$ be a $P$-motion on a globally hyperbolic spacetime $(M,g)$ and $p$ a point for which
\begin{equation}\label{sup-1}
d(p,\phi p)=\sup_{q\in I^+(\phi,p)\cap I^-(\phi,p)}d(q,\phi q),
\end{equation}
then for any maximal timelike geodesic segment $T=T(p,\phi p)$ from $p$ to $\phi p$, the concatenated timelike curve $\gamma:=\bigcup_{i\in\Z}\phi^i T$ is a smooth $\phi$-invariant timelike geodesic. Moreover, $d(q,\phi q)=d(p,\phi p)$ for any $q\in\text{Im}(\gamma)$.
\end{Pro}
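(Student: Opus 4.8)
The plan is to show first that the broken curve $\gamma=\bigcup_{i\in\Z}\phi^iT$ is a \emph{maximal} timelike curve (i.e.\ that each finite sub-concatenation realizes the distance between its endpoints), and then to invoke the standard fact that a maximal causal curve in a globally hyperbolic spacetime is automatically a smooth geodesic, so that no breaking actually occurs at the junction points $\phi^i p$. The $\phi$-invariance is built in by construction, since $\phi$ permutes the pieces $\phi^iT$, and it is legitimate because $\phi$ is a motion (hence, by the cited \cite[Theorem 4.17]{B-E-E}, a Lorentzian isometry, so it maps geodesics to geodesics). The final clause $d(q,\phi q)=d(p,\phi p)$ for $q\in\mathrm{Im}(\gamma)$ will then follow from maximality of $\gamma$ together with the extremal hypothesis \eqref{sup-1}.

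The key quantitative step is the following additivity claim: for every $m<n$ in $\Z$,
\[
d(\phi^m p,\phi^n p)=\sum_{i=m}^{n-1} d(\phi^i p,\phi^{i+1}p)=(n-m)\,d(p,\phi p),
\]
the last equality because $\phi$ is a motion. The inequality ``$\geq$'' is not automatic, so I would argue ``$\leq$'' and ``$\geq$'' separately. ``$\geq$'' is just the reverse triangle inequality \eqref{rt} applied along the chain $\phi^m p\leq\phi^{m+1}p\leq\cdots\leq\phi^n p$, using $p\ll\phi p$ (so each consecutive pair is chronologically related and the intermediate points all lie in $I^+(\phi,p)\cap I^-(\phi,p)$). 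For ``$\leq$'', suppose $d(\phi^m p,\phi^n p)>(n-m)d(p,\phi p)$; a maximal timelike geodesic $\sigma$ from $\phi^m p$ to $\phi^n p$ exists by Proposition~\ref{globally-hyperbolic}, and by the pigeonhole principle one of the $n-m$ consecutive sub-segments $\sigma|_{[\phi^{i}p\text{-level},\phi^{i+1}p\text{-level}]}$—more precisely, parametrize $\sigma$ on $[0,n-m]$ with $\sigma(i-m)=\phi^i p$ via the reverse-triangle-inequality decomposition, which is valid once ``$\geq$'' is known as an equality would need the midpoints, so instead one argues: pick the point $q:=\sigma(s)$ on $\sigma$ with $b$ between $\phi^m p$ and it realizing $d(\phi^m p,q)>d(p,\phi p)$ and $d(q,\phi^n p)>(n-m-1)d(p,\phi p)$ for suitable $s$—and then $\phi^{-m}q$ or a translate of $q$ sits in $I^+(\phi,p)\cap I^-(\phi,p)$ with $d(\phi^{-m}q,\phi^{1-m}q)>d(p,\phi p)$, contradicting \eqref{sup-1}. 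Iterating, every consecutive pair realizes exactly $d(p,\phi p)$, so the chain is rigid and the concatenation $\gamma$ restricted to $[\phi^m p,\phi^n p]$ is maximal.

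Once maximality of every finite arc of $\gamma$ is established, the smoothness and geodesic property at the breakpoints $\phi^i p$ is the classical ``a maximizer is a geodesic'' statement (a timelike curve that is locally length-maximizing has no corners—at a corner one could strictly shorten in Lorentzian length by a short-cut through $I^+$, contradicting maximality, cf.\ the standard first-variation argument behind Proposition~\ref{globally-hyperbolic}(2)), applied in a neighborhood of each $\phi^i p$; equivalently, $\gamma$ is an inextendible timelike line, hence by the usual regularity a smooth geodesic. Finally, for $q\in\mathrm{Im}(\gamma)$ we have $q\in I^+(\phi,p)\cap I^-(\phi,p)$ and $\phi q\in\mathrm{Im}(\gamma)$ as well (since $\phi\gamma=\gamma$), with $d(q,\phi q)\le d(p,\phi p)$ by \eqref{sup-1}; but the sub-arc of $\gamma$ from $q$ to $\phi q$ has $g$-length exactly $d(p,\phi p)$ (it ``spans one period'' by the rigidity above, up to reparametrization matching the $\phi$-translation), and since $\gamma$ is maximal this length is a lower bound for $d(q,\phi q)$, forcing equality. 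I expect the main obstacle to be making the pigeonhole/rigidity argument for the ``$\leq$'' direction fully precise—one must be careful that the intermediate point extracted from a hypothetical too-long maximizer can be moved by a power of $\phi$ back into the set $I^+(\phi,p)\cap I^-(\phi,p)$ over which the supremum \eqref{sup-1} is taken, and that its $\phi$-displacement is genuinely the relevant quantity; the smoothness-at-corners step, by contrast, is routine.
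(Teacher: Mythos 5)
Your strategy rests on the additivity claim $d(\phi^m p,\phi^n p)=(n-m)\,d(p,\phi p)$ for all $m<n$, i.e.\ on $\gamma$ being globally maximal (a timelike line). That claim is strictly stronger than what the proposition asserts, and it is false under the stated hypotheses: the paper's Remark~\ref{rem axis} explicitly notes that the $\gamma$ constructed in Proposition~\ref{prop4-4} is in general \emph{not} maximal. Only the ``$\geq$'' half of your claim (the reverse triangle inequality \eqref{rt} along the chain) is available. The ``$\leq$'' half, which you try to obtain by a pigeonhole/rigidity argument, cannot be completed in a general globally hyperbolic spacetime: to contradict \eqref{sup-1} you must extract from a hypothetical longer maximizer $\sigma$ a point $q$ with $d(q,\phi q)>d(p,\phi p)$, and the only way to control $d(q,\phi q)$ from below is to run a causal chain through $\sigma$ and $\phi\sigma$, which requires these two maximizers to intersect. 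That intersection is exactly what the paper's Theorem~\ref{prop4-5} establishes, and it needs the two additional hypotheses absent here: $M$ homeomorphic to $\R^2$ and $\phi$ orientation preserving. So your route proves (at best) the later theorem, not this proposition, and as stated it proves a false statement.

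The fix is the local argument the paper uses, which you do not need global maximality for. Take an \emph{interior} point $q$ of $T$; then $q\in I^+(\phi,p)\cap I^-(\phi,p)$, and maximality of $T$ plus the isometry property give $d(p,\phi p)=d(p,q)+d(q,\phi p)=d(q,\phi p)+d(\phi p,\phi q)$. Since $q\ll\phi p\ll\phi q$, the reverse triangle inequality gives $d(q,\phi q)\geq d(q,\phi p)+d(\phi p,\phi q)=d(p,\phi p)$, while \eqref{sup-1} gives $d(q,\phi q)\leq d(p,\phi p)$. Hence the broken curve $T(q,\phi p)\cup T(\phi p,\phi q)$ realizes $d(q,\phi q)$, so it is a maximizer containing $\phi p$ in its interior and therefore smooth there by Lemma~\ref{Lemma_Morse}(2); induction handles the other junctions. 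Your treatment of $\phi$-invariance and of the final clause is essentially fine (note that the length $d(p,\phi p)$ of the one-period sub-arc from $q$ to $\phi q$ is a lower bound for $d(q,\phi q)$ by the definition of $d$ alone, with no appeal to maximality of $\gamma$), but the smoothness step as you propose it relies on a claim that fails in general.
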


\begin{proof}
For any interior point $q\in T, p\ll q\ll \phi p$, thus $q\in I^+(\phi,p)\cap I^-(\phi,p)$. Since $\phi$ is a $P$-motion, then by inequality \eqref{rt}
\begin{equation*}
d(p,\phi p)=d(p,q)+d(q,\phi p)=d(q,\phi p)+d(\phi p, \phi q)\leq d(q,\phi q)\leq d(p,\phi p).
\end{equation*}
This implies
\begin{equation}\label{smooth}
d(q,\phi p)+d(\phi p,\phi q)= d(q,\phi q)= d(p,\phi p)
\end{equation}
and $T':=T(q,\phi p)\cup T(\phi p,\phi q)$ is a maximal timelike geodesic segment. Thus $\gamma$ is smooth at $\phi p$. By induction, $\gamma$ is a smooth timelike geodesic and is $\phi$-invariant by its definition. In general for any $q\in\text{Im}(\gamma)$, there exists $i\in\Z$ such that $\phi^{i}q\in T$, we could replace $q$ by $\phi^{i}q\in T$ and use equality \eqref{smooth} to complete the proof.
\end{proof}

\begin{Def}\label{Def_axis}
Let $\phi$ be a $P$-motion on a globally hyperbolic spacetime $(M,g)$. We call $\phi$ an axial motion on $(M,g)$ and $\gamma$ an axis of $\phi$ if $\gamma$ is a maximal timelike geodesic and $\phi$ maps $\gamma$ onto itself.
\end{Def}

\begin{Rem}\label{rem axis}
If $\phi$ is an axial motion on $M$ and $\gamma$ is an axis of $\phi$, then $\phi\gamma(t)=\gamma(t+a)$ for some $a>0$. For any $n\in\Z$ and $p\in\text{Im}(\gamma)$, $\phi^n p\in\text{Im}(\gamma)$ and $d(p,\phi^n p)=nd(p,\phi p)\rightarrow\infty$ as $n\rightarrow\infty$. Hence an axis is a complete timelike line. We also note that
\begin{itemize}
  \item if $\gamma$ is an axis of $\phi$, then $\gamma$ is also an axis of $\phi^l$ for $l\in\Z_+$.
  \item $\gamma$ constructed in Proposition \ref{prop4-4} is in general not maximal thus not an axis of $\phi$.
\end{itemize}
\end{Rem}

Notice that for a timelike line $\gamma$, $I^+(\gamma)\cap I^-(\gamma)$ is an open neighborhood of $\text{Im}(\gamma)$. Moreover, if $\gamma$ is an axis of a $P$-motion $\phi$, then $I^\pm(\gamma)=I^\pm(\phi,p)$ for any $p\in\text{Im}(\gamma)$ and we have

\begin{Pro}\label{prop4-1}
Let $\phi$ be an axial motion on a globally hyperbolic spacetime $(M,g)$. If $\gamma$ is an axis of $\phi$, then for any $l\in\Z_+$ and any $p\in\text{Im}(\gamma)$,
\begin{equation}\label{sup-2}
d(p,\phi^{l}p)=\sup_{q\in I^+(\gamma)\cap I^-(\gamma)}d(q,\phi^{l}q).
\end{equation}
\end{Pro}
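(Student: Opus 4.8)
The plan is to reduce the general statement to the case $l=1$ by a rescaling trick, and then to reduce the $l=1$ case to Proposition \ref{prop4-4}. First observe that if $\gamma$ is an axis of $\phi$, then by Remark \ref{rem axis} it is also an axis of $\psi:=\phi^{l}$ for every $l\in\Z_+$, and $\psi$ is again a $P$-motion on $(M,g)$ (composites of $P$-motions are $P$-motions, since $p\ll\phi p\leq\phi^{2}p\leq\cdots$ gives $p\ll\phi^{l}p$). Moreover $I^{\pm}(\psi,p)=I^{\pm}(\phi,p)=I^{\pm}(\gamma)$ for any $p\in\mathrm{Im}(\gamma)$, the first equality because $\{\psi^{i}p\}_{i\in\Z}=\{\phi^{il}p\}_{i\in\Z}$ is cofinal in $\{\phi^{i}p\}_{i\in\Z}$ with respect to $\leq$ in both time directions (again since $\gamma$ is an axis, the iterates $\phi^{i}p$ march monotonically along $\mathrm{Im}(\gamma)$). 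Hence it suffices to prove \eqref{sup-2} for $l=1$; the case of general $l$ follows by applying the $l=1$ statement to $\psi=\phi^{l}$ in place of $\phi$.

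For the $l=1$ case, the inequality $d(p,\phi p)\leq\sup_{q\in I^{+}(\gamma)\cap I^{-}(\gamma)}d(q,\phi q)$ is trivial since $p\in\mathrm{Im}(\gamma)\subseteq I^{+}(\gamma)\cap I^{-}(\gamma)$. For the reverse inequality, I would argue by contradiction: suppose there is $q_{0}\in I^{+}(\gamma)\cap I^{-}(\gamma)$ with $d(q_{0},\phi q_{0})>d(p,\phi p)$. Since $I^{+}(\gamma)\cap I^{-}(\gamma)=I^{+}(\phi,p)\cap I^{-}(\phi,p)$, this would contradict the hypothesis \eqref{sup-1} that $p$ is a maximizer of $q\mapsto d(q,\phi q)$ over $I^{+}(\phi,p)\cap I^{-}(\phi,p)$ — \emph{provided} we know that $\gamma$ is exactly the geodesic produced by Proposition \ref{prop4-4} applied at $p$, i.e. that $p$ satisfies \eqref{sup-1}. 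So the real content is: \textbf{an axis $\gamma$ of $\phi$ through $p$ forces $d(p,\phi p)=\sup_{q\in I^{+}(\phi,p)\cap I^{-}(\phi,p)}d(q,\phi q)$.} Granting this, the proposition is immediate.

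To establish that equality, suppose for contradiction $q\in I^{+}(\gamma)\cap I^{-}(\gamma)$ has $d(q,\phi q)>d(p,\phi p)=:\delta$. The idea is to compare the displacement $d(q,\phi q)$ with the displacement along the axis over a long stretch. Fix $n\in\Z_+$ large. Since $q\in I^{-}(\gamma)=I^{-}(\phi,p)$, there is $j$ with $q\ll\phi^{-n}p$ for $n$ large, and similarly $\phi^{n}p\ll$ some later iterate once we chronologically relate things; more precisely, pick $a,b$ with $\phi^{-a}p\ll q$ and $q\ll\phi^{b}p$ (possible because $q\in I^{+}(\gamma)\cap I^{-}(\gamma)$ and the iterates of $p$ exhaust the future and past of $\gamma$ in the relevant order — this is where I would invoke that $\gamma$ is a \emph{line} and Lemma \ref{lem_I_gamma}(3) in the class A setting, or in the abstract globally hyperbolic setting the cofinality of $\{\phi^{i}p\}$). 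Then $\phi^{-a+n}p\ll\phi^{n}q$ and $\phi^{n}q\ll\phi^{b+n}p$ for all $n$, so by the reverse triangle inequality
\begin{equation*}
d(\phi^{-a}p,\phi^{b+2n}p)\ \geq\ d(\phi^{-a}p,q)+d(q,\phi^{n}q)+d(\phi^{n}q,\phi^{b+2n}p)\ \geq\ n\,d(q,\phi q)+C,
\end{equation*}
where $C:=d(\phi^{-a}p,q)+d(\phi^{n}q,\phi^{b+2n}p)\geq0$ is bounded below independently of $n$ (using $\phi$-invariance of $d$ to see the last term is constant in $n$), while the left side equals $d(p,\phi^{a+b+2n}p)=(a+b+2n)\delta$ because $\gamma$ is an axis (Remark \ref{rem axis}: $d(p,\phi^{m}p)=m\,\delta$ along the axis). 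Dividing by $n$ and letting $n\to\infty$ gives $2\delta\geq d(q,\phi q)\cdot 1$... here I must be careful with the exact bookkeeping of indices so that the linear-growth rates match: the clean version is to note $\frac{1}{n}d(p,\phi^{a+b+2n}p)\to 2\delta$ while $\frac{1}{n}$ times the lower bound $\to d(q,\phi q)$, forcing $d(q,\phi q)\leq 2\delta$, which is not yet the contradiction — so the displacement must be compared at the \emph{same} rate, i.e. I should use $\phi^{n}q$ against $\phi^{2n}(\cdot)$ replaced by matching the stretch $d(q,\phi^{n}q)\geq n\,d(q,\phi q)$ against $d(\phi^{-a}p,\phi^{b+n}p)=(a+b+n)\delta$, yielding $n\,d(q,\phi q)\leq (a+b+n)\delta + (\text{bounded})$, hence $d(q,\phi q)\leq\delta$ after $n\to\infty$, contradicting $d(q,\phi q)>\delta$.

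\textbf{Main obstacle.} The genuinely delicate point is the chronological bookkeeping: one needs, for a given $q\in I^{+}(\gamma)\cap I^{-}(\gamma)$, that $q$ lies chronologically between two iterates $\phi^{-a}p$ and $\phi^{b}p$ of the axis, and one needs the index arithmetic in the reverse-triangle-inequality chain to be set up so that the linear growth rate of the left-hand side ($\delta$ per unit index) genuinely dominates the rate $d(q,\phi q)$ coming from the $n$-fold iteration — after dividing by $n$ and sending $n\to\infty$. Getting a strict contradiction (rather than $d(q,\phi q)\le 2\delta$ or similar slack) requires iterating $q$ and the axis \emph{at the same speed}, i.e. bounding $d(\phi^{-a}p,\phi^{b}p\cdot\phi^{n})=d(\phi^{-a}p,\phi^{b+n}p)$ against $d(\phi^{-a}p,q)+d(q,\phi^{n}q)+d(\phi^{n}q,\phi^{b+n}p)$ and using that the first and last terms are bounded uniformly in $n$ by $\phi$-invariance. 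Everything else — that $\phi^{l}$ is axial with the same axis, that $I^{\pm}(\gamma)=I^{\pm}(\phi,p)$, that $d(p,\phi^{m}p)=m\,d(p,\phi p)$ along an axis — is routine from the definitions and the Remarks already in the text.
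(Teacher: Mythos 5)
Your final ``clean version'' is exactly the paper's argument: sandwich $q$ chronologically between two iterates $\phi^{m_1}p\ll q\ll\phi^{m_2}p$ of the axis, apply the reverse triangle inequality to the chain $\phi^{m_1}p\ll q\ll\phi q\ll\cdots\ll\phi^{k}q\ll\phi^{m_2+k}p$ against $d(\phi^{m_1}p,\phi^{m_2+k}p)=(k+m_2-m_1)\,d(p,\phi p)$, divide by $k$, and let $k\to\infty$ using global hyperbolicity to bound the two end terms; the reduction of general $l$ to $l=1$ via the fact that $\gamma$ is an axis of $\phi^{l}$ is also how the paper concludes. The only difference is your initial detour with mismatched indices, which you correctly repair, so the proposal is correct and takes essentially the same route as the paper.
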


\begin{proof}
Assume $l=1$, for any $p\in\text{Im}(\gamma), q\in I^+(\gamma)\cap I^-(\gamma)$, by the definitions of $I^\pm (\gamma)$ and Remark \ref{rem axis}, there exist two integers $m_1<m_2$ such that $q\in I^+(\phi^{m_1}p)\cap I^-(\phi^{m_2}p)$. Thus for any $i\in\Z_{+}, \phi^i q\in I^+(\phi^{m_1+i}p)\cap I^-(\phi^{m_2+i}p)$. By Proposition \ref{globally-hyperbolic},
\begin{equation} \label{E_dist}
d(\phi^{m_1}p,q)+d(q,\phi^{m_2}p)<\infty.
\end{equation}
Since $\phi$ is a $P$-motion, for any $k\in\Z_+$, we have
\begin{eqnarray*}
\phi^{m_1}p&\ll &q\ll \phi q\ll\cdots\ll \phi^k q\ll \phi^{m_2+k}p,  \\
\phi^{m_1}p&\ll &\phi^{m_1+1}p\ll\cdots\ll \phi^{m_2+k}p.
\end{eqnarray*}
Thus the reverse triangle inequality \eqref{rt} implies
\begin{eqnarray*}
(k+m_2-m_1)d(p,\phi p) &=& d(\phi^{m_1} p,\phi^{m_2+k} p) \\
&\geq& d(\phi^{m_1}p,q)+\sum\limits_{i=0}^{k-1} d(\phi^i q,\phi^{i+1} q)+d(\phi^k q,\phi^{k+m_2}p)  \\
&=& kd(q,\phi q)+d(\phi^{m_1}p,q)+d(q,\phi^{m_2}p).
\end{eqnarray*}
Dividing both sides by $k$, we have
\begin{equation*}
(1+\frac{m_2-m_1}{k})d(p,\phi p)\geq d(q,\phi q)+\frac{d(\phi^{m_1}p,q)+d(q,\phi^{m_2}p)}{k}.
\end{equation*}
Letting $k\rightarrow\infty$, together with inequality \eqref{E_dist}, leads to equation \eqref{sup-2} when $l=1$. For the general case, the conclusion follows from the fact that $\gamma$ is also an axis of $\phi^l(l\in\Z_+)$, see Remark \ref{rem axis}.
\end{proof}

Now we restrict ourself to consider globally hyperbolic planes $(M,g)$, that is, $M$ is homeomorphic to $\mathbb{R}^2$, instead of general globally hyperbolic spacetimes. In this case, contrary to Remark \ref{rem axis}, we could improve the conclusion of Proposition \ref{prop4-4} into

\begin{The}\label{prop4-5}
Let $\phi$ be an orientation preserving $P$-motion on a globally hyperbolic plane $(M,g)$ and $p$ a point satisfying equation \eqref{sup-1}, then there exists a unique maximal timelike geodesic segment $T=T(p,\phi p)$ from $p$ to $\phi p$ and $\gamma:=\bigcup_{i\in\Z}\phi^i T$ is a timelike line and hence an axis of $\phi$.
\end{The}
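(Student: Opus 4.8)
The plan is to bootstrap Proposition~\ref{prop4-4} using planarity. Existence of $T$ is immediate: since $\phi$ is a $P$-motion, $p\ll\phi p$, so Proposition~\ref{globally-hyperbolic}(2) yields a maximal causal geodesic from $p$ to $\phi p$, which is timelike because $d(p,\phi p)>0$. Fix such a $T$ and set $\gamma:=\gamma_T:=\bigcup_{i\in\Z}\phi^i T$. By Proposition~\ref{prop4-4}, $\gamma$ is a smooth $\phi$-invariant timelike geodesic; as $\mu:=d(p,\phi p)$ is realized by $T$, the isometry $\phi$ acts on $\gamma$ as the future translation by arclength $\mu$, and $d(p,\phi^n p)\ge n\mu\to\infty$ forces $\gamma$ to be inextendible. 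One checks, using that $\{\phi^i p\}$ is cofinal in $\gamma$ in both time directions, that $I^\pm(\gamma)=I^\pm(\phi,p)$, so hypothesis~\eqref{sup-1} reads: $d(q,\phi q)\le\mu$ for every $q\in I^+(\gamma)\cap I^-(\gamma)$. Finally, in the plane $M$ the set $\mathrm{Im}(\gamma)$ is a properly embedded line, hence $M\setminus\mathrm{Im}(\gamma)=U^+\sqcup U^-$; since $\phi$ is orientation preserving and translates $\gamma$ into its own future, $\phi(U^\pm)=U^\pm$.

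\emph{Uniqueness of $T$.} Suppose $T_1\ne T_2$ are two maximizers from $p$ to $\phi p$, with associated geodesics $\gamma_1\ne\gamma_2$; their future unit tangents at $p$ differ (else $\gamma_1=\gamma_2$), hence, pushing forward by $\phi$, at every $\phi^i p$, where $\gamma_1$ and $\gamma_2$ therefore cross transversally. The arcs $\phi^i T_1,\phi^i T_2$ bound a lens $L_i$; the transversal picture at $\phi^{i+1}p$ puts $L_i$ and $L_{i+1}$ in opposite quadrants there, hence on opposite sides of $\gamma_1$ in $M$. But $L_{i+1}=\phi(L_i)$ and $\phi$ fixes each side of $\gamma_1$ — a contradiction. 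So $T$ is unique.

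\emph{Maximality of $\gamma$.} If $\gamma$ were not maximal, then by $\phi$-invariance and \eqref{rt} one has $d(p,\phi^{2n}p)>2n\mu$ for some $n$; let $m\ge2$ be least with $d(p,\phi^m p)>m\mu$, so $d(p,\phi^k p)=k\mu$ for $0\le k<m$. Pick a maximizer $\sigma$ from $p$ to $\phi^m p$; the reverse triangle inequality forces $\sigma$ to avoid $\phi p,\dots,\phi^{m-1}p$ (otherwise $\phi^k p\in\mathrm{Im}(\sigma)$ would give $d(p,\phi^m p)=k\mu+(m-k)\mu=m\mu$). Now $\sigma$ and $\phi^{-1}\sigma$ are timelike arcs whose endpoints $\gamma(0),\gamma(m\mu)$ and $\gamma(-\mu),\gamma((m-1)\mu)$ interleave along $\gamma$, so a Jordan-curve argument produces a point $r\in\mathrm{Im}(\sigma)\cap\mathrm{Im}(\phi^{-1}\sigma)$; the avoidance properties make $r$ an interior point of both arcs, so $r,\phi r\in\mathrm{Im}(\sigma)$ with $p\ll r\ll\phi r\ll\phi^m p$ and $r\ll\phi^{m-1}p$. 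Writing $A:=d(p,r)$, $B:=d(r,\phi r)$, $C:=d(\phi r,\phi^m p)$ we get $A+B+C=d(p,\phi^m p)>m\mu$, whereas $A\le d(p,\phi^{m-1}p)=(m-1)\mu$ and, since $\phi$ is an isometry and $p\ll r\ll\phi^{m-1}p$, also $C=d(r,\phi^{m-1}p)\le(m-1)\mu-A$; thus $A+C\le(m-1)\mu$ and $B>\mu$. This contradicts \eqref{sup-1}, because $r\in I^+(\gamma)\cap I^-(\gamma)$. Hence $\gamma$ is a maximal inextendible timelike geodesic with $\phi\gamma=\gamma$, i.e.\ a timelike line and an axis of $\phi$ in the sense of Definitions~\ref{Def_line} and~\ref{Def_axis}.

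The delicate step is the planar-topology claim in the maximality argument: extracting the intersection point $r$ of $\sigma$ and $\phi^{-1}\sigma$ together with the causal chain $p\ll r\ll\phi r\ll\phi^m p$. The subtlety is that, $\gamma$ being assumed non-maximal, $\sigma$ and $\phi^{-1}\sigma$ may meet $\mathrm{Im}(\gamma)$ at points other than their endpoints, so one cannot simply apply the Jordan curve theorem to the loop $\sigma\cup\mathrm{Im}(\gamma)$; instead one should work with the timelike diamond $I^+(\gamma(0))\cap I^-(\gamma(m\mu))$ — noting $\phi^{-1}p$ lies outside it while $\phi^{m-1}p$ lies inside it — or pass to sub-arcs between consecutive intersections of $\sigma$ with $\gamma$. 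Once $r$ is available, the distance estimate and the uniqueness argument are short. That the conclusion genuinely relies on two-dimensionality and orientation-preservation is consistent with Remark~\ref{rem axis}, which notes that the geodesic of Proposition~\ref{prop4-4} need not be an axis in general.
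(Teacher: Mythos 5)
Your proposal is correct and takes essentially the same route as the paper: uniqueness from orientation-preservation forcing the region bounded by the two maximizers to switch sides of the axis (the paper phrases this as the boundary loop $\phi\eta$ reversing orientation), and maximality by intersecting a hypothetical longer maximizer with its $\phi$-translate and running the identical reverse-triangle-inequality estimate against \eqref{sup-1}. The one step you flag as delicate is already closed by what you have established: Morse's crossing lemma together with the minimality of $m$ shows that $\sigma$ meets $\gamma([0,m\mu])$ only at $p$ and $\phi^{m}p$ (it shares the endpoint $p$ with the maximizer $U^{m-1}$ and the endpoint $\phi^{m}p$ with $\phi^{m-1}T$), so $\sigma\cup\gamma|_{[0,m\mu]}$ is a genuine Jordan curve and the point $r$ is produced exactly as the paper produces its point $q$.
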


In the proof of Theorem \ref{prop4-5}, we shall use two elementary facts which are effective when study behaviors of maximal causal geodesics on Lorentzian planes.
\begin{Lem}\cite[Section 4]{Jin-Cui1}\label{Lemma_Morse}
Let $(M,g)$ be a globally hyperbolic spacetime, then
\begin{enumerate}
  \item Morse's crossing lemma: Two timelike maximal geodesic segments cannot intersect twice in $(M,g)$, except that the intersections occurred  at their end points.
  \item Curve lengthening lemma: Every timelike maximizer can be parametrized as a smooth geodesic. Thus, if a timelike curve $\gamma:[a,b]\rightarrow(M,g)$ contains a corner in its interior, it can not be maximal.
\end{enumerate}
\end{Lem}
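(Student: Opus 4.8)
The plan is to start from any maximal timelike geodesic segment $T=T(p,\phi p)$, which exists by global hyperbolicity (Proposition \ref{globally-hyperbolic}) since $\phi$ is a $P$-motion and hence $p\ll\phi p$. Because $p$ satisfies \eqref{sup-1}, Proposition \ref{prop4-4} already gives that $\gamma:=\bigcup_{i\in\Z}\phi^i T$ is a smooth $\phi$-invariant timelike geodesic along which the displacement is constant, $d(q,\phi q)=d(p,\phi p)=:d$ for every $q\in\mathrm{Im}(\gamma)$. The two things left to prove — and the whole reason for restricting to a plane — are that this geodesic is \emph{maximal} (so that it is a timelike line and therefore an axis, by Definitions \ref{Def_line} and \ref{Def_axis}) and that the segment $T$ is \emph{unique}. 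Inextendibility will be automatic: $\gamma$ is defined on all of $\R$, and $d(p,\phi^n p)\ge n\,d\to\infty$ forces its affine parameter to exhaust $\R$, so a maximal such $\gamma$ is a complete timelike line.

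For maximality I would argue by contradiction using Morse's crossing lemma (Lemma \ref{Lemma_Morse}(1)) together with the orientation-preserving hypothesis. If $\gamma$ fails to be maximal, then for suitable lattice points $A=p$, $B=\phi^N p$ (necessarily $N\ge2$, since $\gamma|_{[p,\phi p]}=T$ is a maximizer) the segment $\gamma|_{[A,B]}$, of length $Nd$, is beaten by a maximizer $S$ from $A$ to $B$ with $L(S)=d(A,B)>Nd$. Since $S\neq\gamma|_{[A,B]}$, one checks that $S$ lies entirely on one side of $\gamma$, say in $U^{+}$ (here the planar splitting $\R^2\setminus\mathrm{Im}(\gamma)=U^+\cup U^-$ and Definition \ref{def_point_line} enter, and the crossing lemma applied to the locally maximal pieces of $\gamma$ is what justifies the one-sidedness). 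Because $\phi$ is orientation preserving and fixes $\mathrm{Im}(\gamma)$ setwise, every translate $\phi^k S$ again lies in $\overline{U^{+}}$; and since the feet of $S$ and of $\phi S$ interleave on $\gamma$ as $p<\phi p<\phi^N p<\phi^{N+1}p$, the two arcs over the line $\gamma$ must cross at an interior point $w$. This crossing places both $\phi^{-1}w$ and $w$ on the single maximizer $S$, so that the $S$-arclength between them equals $d(\phi^{-1}w,w)=d(w,\phi w)$.

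The heart of the matter — and the step I expect to be the main obstacle — is to convert the excess length $L(S)-Nd>0$ into a \emph{single} point $q$ on $S$ with $d(q,\phi q)>d$, which contradicts the defining sup-property \eqref{sup-1} once one notes that $q\in I^{+}(\gamma)\cap I^{-}(\gamma)=I^{+}(\phi,p)\cap I^{-}(\phi,p)$ (because $A\ll q\ll B$ with $A,B\in\mathrm{Im}(\gamma)$). Morally this is an averaging statement: the endpoints satisfy $S(D_N)=\phi^{N}\!\bigl(S(0)\bigr)$, so the total $\phi^{N}$-displacement accumulated along $S$ is $d(p,\phi^N p)=D_N>Nd$, whence the \emph{average} $\phi$-displacement over the $N$ shifts exceeds $d$ and some shift must too. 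Making this precise requires handling the crossing points $x_k\in S\cap\phi^k S$ carefully and combining the crossing lemma with the curve-lengthening lemma (Lemma \ref{Lemma_Morse}(2)) at the induced corners; this bookkeeping, together with the one-sidedness verification above, is where the real work lies.

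Finally, uniqueness of $T$ follows cleanly once maximality is in hand. If $T_1\neq T_2$ were two maximizers from $p$ to $\phi p$, the construction above produces two distinct timelike lines $\gamma_1,\gamma_2$, both passing through every $\phi^i p$. Their sub-segments $\gamma_j|_{[\phi^{-1}p,\,\phi^2 p]}$ are then two distinct maximal timelike geodesic segments sharing the \emph{interior} points $p$ and $\phi p$, which is impossible by Morse's crossing lemma (Lemma \ref{Lemma_Morse}(1)). Hence $T$ is unique, $\gamma$ is a timelike line, and $\phi$ maps $\gamma$ onto itself, i.e.\ $\gamma$ is an axis of $\phi$.
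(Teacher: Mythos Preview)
Your proposal does not address the stated lemma at all. Lemma~\ref{Lemma_Morse} is quoted from \cite{Jin-Cui1} and carries no proof in the paper; what you have written is a proof sketch for Theorem~\ref{prop4-5}, and you in fact invoke Lemma~\ref{Lemma_Morse} as a tool throughout.

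Read as an attempt at Theorem~\ref{prop4-5}, your outline shares the paper's endgame (produce $q$ with $d(q,\phi q)>d(p,\phi p)$, contradicting \eqref{sup-1}) but reverses the order of the two parts and leaves gaps. The paper proves uniqueness of $T(p,\phi p)$ \emph{first}, by a short orientation trick independent of maximality: if $T_1\neq T_2$ are two maximizers, then by Proposition~\ref{prop4-4} both $T_i\cup\phi T_i$ are smooth and hence cross transversally at $\phi p$, so $\phi$ sends the oriented Jordan curve $T_1\cup T_2^{-1}$ to one of reversed orientation, contradicting that $\phi$ is orientation-preserving. Maximality is then proved by induction on $k$: with $U^{k-1}$ already maximal, a competing maximizer $S$ from $p$ to $\phi^k p$ cannot meet the interior of $U^{k-1}$ (Lemma~\ref{Lemma_Morse}) nor the vertex $\phi^{k-1}p$ (by the uniqueness just established); then $S\cap\phi S=\{q\}$ and the single chain
\[
k\,d(p,\phi p)<L^g(S)=L^g(\phi S)=d(\phi p,q)+d(q,\phi q)+d(\phi q,\phi^{k+1}p)\le(k-1)\,d(p,\phi p)+d(q,\phi q)
\]
finishes it directly.

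Your reversed order runs into trouble at precisely the two places the paper's ordering is designed to handle. First, your ``one-sidedness'' of $S$ relative to $\gamma$ appeals to the crossing lemma on the \emph{locally} maximal pieces $\phi^i T$, but nothing you have proved prevents $S$ from passing through a vertex $\phi^i p$ and switching sides there; the paper rules this out using the uniqueness of $T$ together with the inductive maximality of $U^{k-1}$, neither of which you have in hand. Second, your ``averaging'' step is left as an acknowledged sketch with unspecified bookkeeping over crossing points $x_k\in S\cap\phi^k S$; the paper avoids all of that via the one-line inequality above at the single crossing $q=S\cap\phi S$. Your uniqueness argument via $\gamma_j|_{[\phi^{-1}p,\phi^2 p]}$ is valid once maximality is in hand, but is more roundabout than the paper's orientation argument, which is available up front and is what makes the clean induction for maximality possible.
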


\textit{Proof of Theorem \ref{prop4-5}:}
We begin to prove the uniqueness of $T(p,\phi p)$ if $p$ satisfies equation \eqref{sup-1}. Assume this is not the case, there are two distinct maximal timelike geodesic segments $T_1,T_2$ from $p$ to $\phi p$. By Morse's crossing lemma \ref{Lemma_Morse}, $T_1, T_2$ have no common points except $p$ and $\phi p$. Denote by $\eta$ the curve $T_1$ traversed from $p$ to $\phi p$ followed by $T_2$ traversed from $\phi p$ to $p$, then $\eta$ is an oriented simple closed curve. By Proposition \ref{prop4-4}, both $T_{1}\cup\phi T_1$ and $T_2\cup\phi T_2$ are smooth timelike geodesic segments which intersect transversally at $\phi p$, this implies that $\phi\eta$ is an oriented curve with the opposite orientation, in contradiction to the fact that $\phi$ preserves orientation (see Figure $1.$(A)).

Now to prove $\gamma:=\bigcup_{i\in\Z}\phi^{i}T$ is a timelike line, we only need to show that the arc $U^{k}:=\bigcup^{k-1}_{i=0}\phi^{i}T$ from $p$ to $\phi^{k}p$ is a maximal timelike geodesic segment for all $k\geq 1$. Obviously, this is true for $k=1$. By induction, assume that $U^{k-1}$ is maximal. If $U^k$ is not maximal, then global hyperbolicity implies that there is a maximal timelike geodesic segment $S$ from $p$ to $\phi^{k}p$ and
\begin{equation}\label{contra}
d(p,\phi^{k}p)=L^g(S)>k d(p,\phi p).
\end{equation}

Again by Lemma \ref{Lemma_Morse}, since both $U^{k-1}$ and $T(\phi^{k-1}p,\phi^{k}p)$ are maximal timelike geodesic segments, $S$ cannot intersect interior points of $U^{k-1}$ or $T(\phi^{k-1}p,\phi^{k}p)$. Moreover, $S$ cannot intersect $U^k$ at $\phi^{k-1}p$, because $T(p,\phi^{k-1}p)$ and $T(\phi^{k-1}p,\phi^{k}p)$ is unique as we have shown. Therefore, $S$ has no intersection points with $U^k$ except $p$ and $\phi^{k}p$.

We note that $S$ has no self-intersection. Otherwise we suppose $x\in S$ is such a point (see Figure $1.$(B)), then $d(x,x)>0$. On the other hand, $(M,g)$ contains no smooth closed timelike curves by global hyperbolicity, hence $d(x,x)=0$ for all $x\in M$, a contradiction!

Since $S$ has no self-intersection and $\phi$ preserves orientation, $S$ and $\phi S$ intersect at exactly one point $q$, thus $\phi q=\phi S\cap \phi^2S$, by the assumptions we get that $\phi S$ is a maximal timelike segment from $\phi p$ to $\phi^{k+1}p$ and $d(\phi p,\phi^kp)=d(p,\phi^{k-1}p)=(k-1)d(p,\phi p)$ (see Figure $1.$(C)).

By inequality \eqref{contra} and above discussions, we have
\begin{eqnarray*}
kd(p,\phi p) & < & d(p,\phi^kp)=L^g(S)=L^g(\phi S) \\
& = & d(\phi p,q)+d(q,\phi q)+d(\phi q,\phi^{k+1}p)  \\
& = & d(\phi p,q)+d(q,\phi^kp)+d(q,\phi q)  \\
& \leq &  d(\phi p,\phi^kp)+d(q,\phi q)  \\
& = & (k-1)d(p,\phi p)+d(q,\phi q).
\end{eqnarray*}
Thus $d(p,\phi p)<d(q,\phi q)$, which contradicts the fact that $p\ll q\ll p^k$ and the equation \eqref{sup-1}.

\begin{figure}
    \begin{subfigure}[b]{.3\linewidth}
        \centering
        \begin{tikzpicture}[scale=0.6,auto=right]
          \draw (-0.2,0.2)  to[out=-45,in=315] (0,0)node[scale=2]{.};
          \draw (-0.2,-0.14) to[out=40,in=210] (0,0);
          \draw (0,0) node[align=center, below=2pt]{\small$p$} to[out=30,in=135] node[above=1pt]{\small $T_2$}(3,0)node[scale=2]{.};
          \draw [->](2,-0.3) arc (-90:90:10pt);
          \draw [->](5,0.4) arc (90:-90:10pt);
          \draw (0,0) to[out=315,in=210] node[below=1pt]{\small $T_1$} (3,0);
          \draw (3,0) node[align=center, below=1pt]{\small$p^1$} to[out=30,in=135] node[above=1pt]{\small $\phi T_1$} (6,0)node[scale=2]{.};
          \draw (3,0) to[out=315,in=210] node[below=1pt]{\small $\phi T_2$} (6,0);
          \draw (6,0) node[align=center, below=1pt]{\small$p^2$} to[out=-45,in=135] (6.2,-0.2);
          \draw (6,0) to[out=30,in=220] (6.2,0.14);
        \end{tikzpicture}
        \caption{}
        \label{subfig:A}
    \end{subfigure}
    \begin{subfigure}[b]{0.3\linewidth}
        \centering
        \begin{tikzpicture}[scale=0.7,auto=right]
          \draw (0,0) node[scale=2]{.} node[below=2pt]{\small$x$} to[out=20,in=0] coordinate[pos=0.5](S) (0,3);
          \draw (0,3) to[out=180,in=160] (0,0);
          \draw (S) node[right]{\small$S$};
          \draw (-1,-0.2) to[out=8,in=200] (0,0);
          \draw (0,0) to[out=-20,in=172] (1,-0.2);
        \end{tikzpicture}
        \caption{}
        \label{subfig:B}
    \end{subfigure}
    \begin{subfigure}[b]{0.3\linewidth}
        \centering
        \begin{tikzpicture}[scale=0.6,auto=left]
          \draw (-0.5,0) --(8.5,0);
          \draw (4,0)node[below]{$\gamma$};
          \draw (0,0) node[below=2pt]{\small $p$} to coordinate[pos=1/8](p1) coordinate[pos=1/4](p2) coordinate[pos=3/4](p8) coordinate[pos=7/8](p9) coordinate[pos=1](p10) (8,0);
          \fill[black] (0,0) circle (1pt);
          \fill[black] (p1) circle (1pt);
          \fill[black] (p2) circle (1pt);
          \fill[black] (p8) circle (1pt);
          \fill[black] (p9) circle (1pt);
          \fill[black] (p10) circle (1pt);
          \draw (p1) node[below]{\small $p^1$};
          \draw (p2) node[below]{\small $p^2$};
          \draw (p8) node[below]{\small $p^{k}$};
          \draw (p9) node[below]{\small $p^{k+1}$};
          \draw (p10) node[below]{\small $p^{k+2}$};
          \draw (5.4,0.8) node[right]{\tiny $S$};
          \draw (6.4,0.8) node[right]{\tiny $\phi S$};
          \draw (7.4,0.8) node[right]{\tiny $\phi^2S$};
          \draw[name path=line 1] (0,0) to[out=50,in=110] (p8);
          \draw[name path=line 2] (p1) to[out=50,in=110] (p9);
          \draw[name path=line 3] (p2) to[out=50,in=110] (p10);
          \fill[black,name intersections={of=line 1 and line 2,total=\t}]
            \foreach \s in {1,...,\t}{(intersection-\s) circle (2pt) node[black,above]{\small$q$}};
          \fill[black,name intersections={of=line 2 and line 3,total=\t}]
            \foreach \s in {1,...,\t}{(intersection-\s) circle (2pt) node[black,above]{\small$q^1$}};
        \end{tikzpicture}
        \caption{}
        \label{subfig:C}
    \end{subfigure}
    \caption{}
    \label{Figure1}
\end{figure}

\qed

\begin{Rem}
By Theorem \ref{prop4-5}, on a globally hyperbolic plane, the concepts of orientation preserving $P$-motion and axial motion coincide.
\end{Rem}

\section{$P$-motions and class A 2 tori}
In this section, we use the results obtained in Sections 2 and 4 to further discuss $P$-motions on $(\R^2,g)$, where $(\R^2,g)$ is the Abelian cover of a class A 2-torus $(\T^2,g)$.

Let $\phi$ be a $P$-motion on $(\R^2,g)$ and $\gamma$ an axis of $\phi$,
\begin{itemize}
  \item by Lemma \ref{thm_Schelling}, $\gamma$ has an asymptotic direction $\alpha\in[m^-,m^+]$,
  \item if the asymptotic direction of $\gamma$ falls in $(m^-,m^+)$, then by Lemma \ref{lem_I_gamma}, for any $p\in\text{Im}(\gamma), I^\pm(\phi,p)=I^\pm(\gamma)=\R^2$.
\end{itemize}
Thus we obtain an immediate corollary of Proposition \ref{prop4-1}:

\begin{Lem} \label{cor4-1}
Let $(\R^2,g)$ be the Abelian cover of a class A Lorentzian 2-torus. If $\phi$ is a $P$-motion on $(\R^2,g)$ and $\gamma$ is an axis of $\phi$ with asymptotic direction $\alpha\in(m^-,m^+)$, then for any $l\in\Z_{+}$ and any $p\in\text{Im}(\gamma)$,
\begin{equation*}
d(p,\phi^{l}p)=\sup_{q\in\R^2}d(q,\phi^{l}q).
\end{equation*}
\end{Lem}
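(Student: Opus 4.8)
The plan is to read this off from Proposition \ref{prop4-1} once the neighborhood $I^+(\gamma)\cap I^-(\gamma)$ appearing there is identified with the whole plane. First I would check that the hypotheses of Proposition \ref{prop4-1} hold: $(\R^2,g)$ is globally hyperbolic by Definition \ref{class A}, and since $\gamma$ is an axis of the $P$-motion $\phi$, the map $\phi$ is by Definition \ref{Def_axis} an axial motion admitting $\gamma$ as an axis. Proposition \ref{prop4-1} then gives, for every $l\in\Z_+$ and every $p\in\text{Im}(\gamma)$,
\[
d(p,\phi^{l}p)=\sup_{q\in I^+(\gamma)\cap I^-(\gamma)}d(q,\phi^{l}q).
\]

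Next I would use the standing assumption that the asymptotic direction $\alpha$ of the axis $\gamma$ lies in the open arc $(m^-,m^+)$. By Lemma \ref{lem_I_gamma}(2), $\gamma$ is then a complete timelike line, and by Lemma \ref{lem_I_gamma}(3) it satisfies $I^+(\gamma)=\R^2$ and $I^-(\gamma)=\R^2$; hence $I^+(\gamma)\cap I^-(\gamma)=\R^2$. Substituting this into the displayed identity yields $d(p,\phi^{l}p)=\sup_{q\in\R^2}d(q,\phi^{l}q)$, which is the claim.

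I do not expect any genuine obstacle here: the statement is truly a corollary, the two inputs being Proposition \ref{prop4-1} and the structural fact (Lemma \ref{lem_I_gamma}) that a timelike line in the Abelian cover whose asymptotic direction lies in $(m^-,m^+)$ has chronological future and past equal to all of $\R^2$. The only point worth spelling out in the write-up is that ``$\gamma$ is an axis of $\phi$'' already encodes the axiality hypothesis needed to invoke Proposition \ref{prop4-1}, so no appeal to the orientation-preserving hypothesis of Theorem \ref{prop4-5} is required.
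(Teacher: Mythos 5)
Your proposal is correct and is essentially identical to the paper's argument: the authors also obtain this lemma as an immediate corollary of Proposition \ref{prop4-1} combined with the observation from Lemma \ref{lem_I_gamma} that $I^{\pm}(\gamma)=\R^2$ when the axis has asymptotic direction in $(m^-,m^+)$. Your remark that the axiality hypothesis is already built into the assumption that $\gamma$ is an axis, so no orientation-preserving hypothesis is needed, is also accurate.
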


\begin{Lem} \label{lem_bf_levelset}
Let $(\R^2,g)$ be the Abelian cover of a class A 2-torus and $\gamma:[0,+\infty)\rightarrow \R^2$ a subray of a timelike line with an asymptotic direction in $(m^-,m^+)$. Then for any $p\ll q$ lying on $\text{Im}(\gamma)$,
\begin{equation*}
d(p,q)=d(K_{\infty}(p,\gamma),K_{\infty}(q,\gamma)).
\end{equation*}
\end{Lem}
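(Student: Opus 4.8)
The goal is to show that for $p \ll q$ on the subray $\gamma$ of a timelike line with asymptotic direction in $(m^-,m^+)$, the distance $d(p,q)$ equals the distance $d(K_\infty(p,\gamma),K_\infty(q,\gamma))$ between the two horospheres. One inequality is essentially free from the definition of distance between sets; the content is the reverse inequality, and this is where I expect the main obstacle to lie. The plan is to prove $d(p,q) \ge d(K_\infty(p,\gamma),K_\infty(q,\gamma))$ via the fundamental property of Busemann functions, and $d(p,q) \le d(K_\infty(p,\gamma),K_\infty(q,\gamma))$ by exhibiting, for each $\varepsilon>0$, points on the two horospheres whose distance is at least $d(p,q)-\varepsilon$.

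For the first inequality: by Proposition \ref{pro-buse}(3), for any $x \le y$ in $I^-(\gamma)$ we have $b_\gamma(y)-b_\gamma(x) \ge d(x,y)$. If $x \in K_\infty(p,\gamma)$ and $y \in K_\infty(q,\gamma)$ with $x \le y$, then $d(x,y) \le b_\gamma(y)-b_\gamma(x) = b_\gamma(q)-b_\gamma(p)$. It therefore suffices to compute $b_\gamma(q)-b_\gamma(p)$. Since $p,q$ lie on $\gamma$ itself, which is maximal, one has $d(p,\gamma(s)) = d(p,q)+d(q,\gamma(s))$ for $s$ large (using that the subray from $q$ is maximal and $p \ll q$ on it), so $s - d(p,\gamma(s)) = [s-d(q,\gamma(s))] - d(p,q)$; letting $s\to\infty$ gives $b_\gamma(p) = b_\gamma(q) - d(p,q)$, i.e.\ $b_\gamma(q)-b_\gamma(p) = d(p,q)$. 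Hence $d(x,y) \le d(p,q)$ for all such $x,y$, which gives $d(K_\infty(p,\gamma),K_\infty(q,\gamma)) \le d(p,q)$.

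For the reverse inequality, I would argue: $p$ itself lies on $K_\infty(p,\gamma)$ (it is in $I^-(\gamma)$ by Lemma \ref{lem_I_gamma}(3), since the asymptotic direction lies in $(m^-,m^+)$, so $I^-(\gamma)=\R^2$; and $b_\gamma(p)=b_\gamma(p)$ trivially), and likewise $q \in K_\infty(q,\gamma)$. Therefore $d(K_\infty(p,\gamma),K_\infty(q,\gamma)) \ge d(p,q)$ directly from the definition \eqref{diss} of distance between sets as a supremum. Combining the two inequalities yields equality. The main subtlety to get right is the verification that $p$ and $q$ genuinely lie on their own horospheres — this needs $p,q \in I^-(\gamma)$, which is exactly what Lemma \ref{lem_I_gamma}(2)--(3) and the hypothesis on the asymptotic direction provide, since the timelike line is complete and $I^-(\gamma)=\R^2$. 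A secondary point requiring care is the identity $b_\gamma(q)-b_\gamma(p)=d(p,q)$: one must check $d(p,\gamma(s)) = d(p,q)+d(q,\gamma(s))$ holds for all large $s$, which follows because the restriction of $\gamma$ to $[\,t_p,\infty)$ (where $\gamma(t_p)=p$) is a maximizer through $q$, so the reverse triangle inequality is an equality along it. Once these are in place the proof is short; the only real work is bookkeeping with the Busemann function identities from Proposition \ref{pro-buse}.
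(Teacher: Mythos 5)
Your proposal is correct and takes essentially the same route as the paper's proof: both directions come from Proposition \ref{pro-buse}(3) applied to pairs of points on the two horospheres (the case $y\notin J^{+}(x)$ being trivial since then $d(x,y)=0$), combined with the observation that $p$ and $q$ lie on their own horospheres because $I^-(\gamma)=\R^2$ by Lemma \ref{lem_I_gamma}. You merely supply a bit more detail than the paper on the identity $b_{\gamma}(q)-b_{\gamma}(p)=d(p,q)$, which the paper treats as immediate from the maximality of $\gamma$.
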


\begin{proof}
By the assumptions and Lemma \ref{lem_I_gamma}, $I^-(\gamma)=\R^2$ and $b_{\gamma}$ is finite everywhere. By (3) of Proposition \ref{pro-buse}, for any $x\in K_{\infty}(p,\gamma)$ and $y\in K_{\infty}(q,\gamma)$,
\begin{itemize}
  \item if $y\in J^{+}(x)$, then $d(p,q)=b_{\gamma}(q)-b_{\gamma}(p)=b_{\gamma}(y)-b_{\gamma}(x)\geq d(x,y)$,
  \item if $y\notin J^+(x)$, then $d(x,y)=0$, $d(p,q)\geq d(x,y)$ obviously holds.
\end{itemize}
The lemma follows directly from equation \eqref{diss}.
\end{proof}

\begin{Lem}\label{lem_foot}
If $f$ is a foot of $p$ on a set $S$ in a globally hyperbolic spacetime and $q$ is a point satisfying $p\ll q\ll f$ and $d(p,f)=d(p,q)+d(q,f)$, then $f$ is the unique foot of $q$ on $S$.
\end{Lem}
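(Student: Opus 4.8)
The plan is to argue that any competitor to $q$'s foot on $S$ would contradict the optimality of $f$ as a foot of $p$. So let $s \in S$ be any point with $q \ll s$ (if no such point exists, or if $d(q,S)=0$, the statement is vacuous or trivial, so assume $q$ has a genuine foot situation on $S$). First I would apply the reverse triangle inequality \eqref{rt} along $p \ll q \ll s$ to get $d(p,s) \geq d(p,q) + d(q,s)$. On the other hand, since $f$ is a foot of $p$ on $S$, we have $d(p,s) \leq d(p,f) = d(p,S)$. Combining these with the hypothesis $d(p,f) = d(p,q) + d(q,f)$ yields
\begin{equation*}
d(p,q) + d(q,s) \leq d(p,s) \leq d(p,f) = d(p,q) + d(q,f),
\end{equation*}
hence $d(q,s) \leq d(q,f)$. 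Since $s \in S$ was arbitrary and $f \in S$, this shows $d(q,f) = d(q,S)$, i.e.\ $f$ is a foot of $q$ on $S$.

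The second part is uniqueness. Suppose $s \in S$ is another foot of $q$ on $S$, so $d(q,s) = d(q,f) = d(q,S) > 0$ (positivity because $q \ll f$ gives $d(q,f)>0$); in particular $q \ll s$. Running the chain of inequalities above again, every inequality must now be an equality, so $d(p,s) = d(p,q) + d(q,s) = d(p,f) = d(p,S)$; thus $s$ is also a foot of $p$ on $S$ and the concatenation of a maximizer from $p$ to $q$ with a maximizer from $q$ to $s$ is a maximal timelike curve from $p$ to $s$. By the curve lengthening lemma (Lemma \ref{Lemma_Morse}(2)) this concatenation has no corner at $q$, so it is a single timelike geodesic segment; the same is true of the concatenation through $q$ ending at $f$. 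Both of these maximal segments start at $p$ with the same initial direction (they share the maximizer from $p$ to $q$), so by uniqueness of geodesics with given initial data they coincide as long as both are defined; since $d(p,s) = d(p,f)$ they have the same length, hence $s = f$.

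The main obstacle I anticipate is the uniqueness step: one must rule out that $q$ has two distinct feet on $S$, and the clean way to do it is precisely to push the maximizing chain back to $p$ and invoke that a timelike maximizer through an interior point cannot have a corner there, together with uniqueness of geodesics from initial conditions. One should be a little careful that $q \ll s$ genuinely holds for a foot $s$ of $q$ — this follows since $d(q,S) \geq d(q,f) > 0$ forces any foot to lie in $I^+(q)$. If instead one prefers a purely metric argument avoiding geodesics, Morse's crossing lemma (Lemma \ref{Lemma_Morse}(1)) applied to the two maximal segments $T(p,q)\cup T(q,f)$ and $T(p,q)\cup T(q,s)$ — which share the subsegment $T(p,q)$ — would also force them to coincide, again giving $s=f$.
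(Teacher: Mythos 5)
Your proof is correct and follows essentially the same route as the paper: the chain $d(q,s)\le d(p,s)-d(p,q)\le d(p,f)-d(p,q)=d(q,f)$ (with the trivial case $s\notin J^+(q)$ handled separately) is exactly how the paper shows $f$ is a foot of $q$, and your uniqueness step is the paper's corner argument via the curve lengthening lemma, just phrased directly rather than by contradiction. If anything, you spell out a bit more explicitly than the paper why a second foot would force the two maximal concatenations through $q$ to be the same geodesic issuing from $p$.
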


\begin{proof}
By the assumptions, for any point $x\in S$,
\begin{itemize}
  \item if $x\notin J^+(q)$, then $d(q,x)=0<d(q,f)$,
  \item if $x\in J^+(q)$, then $d(q,x)\leq d(p,x)-d(p,q)\leq d(p,f)-d(p,q)=d(q,f)$.
\end{itemize}
Thus $f$ is a foot of $q$ and $d(q,x)=d(q,f)>0$ for some $x\in S$ only if $d(p,x)=d(p,q)+d(q,x)$. By global hyperbolicity and $p\ll q\ll f$, let $T(p,q)$ and $T(q,x)$ be two timelike maximal geodesic segments connecting $p,q$ and $q,x$ respectively. If $x\neq f$, then $T(p,q)\cup T(q,x)$ has a corner at $q$, by Lemma \ref{Lemma_Morse} so that can not be maximal, $d(p,x)>d(p,q)+d(q,x)=d(p,f)$. This is a contradiction since $d(p,f)=d(p,S)$, hence $f$ is unique.
\end{proof}

\begin{Def}\label{parallel}
Two timelike lines $\gamma$ and $\zeta$ in a globally hyperbolic spacetime are parallel if each future (resp. past) directed subray of $\gamma$ is a co-ray to a future (resp. past) directed subray of $\zeta$ and vice versa.
\end{Def}
For the definition of co-ray, we refer the readers to \cite{Galloway} and \cite{Bangert94}.

In the following, we shall prove that two axes of an axial $P$-motion are parallel under certain conditions.

\begin{Pro}\label{prop4-2}
Let $(\R^2,g)$ be the Abelian cover of a class A 2-torus and $\gamma$ an axis of a $P$-motion $\phi$ with asymptotic direction $\alpha\in(m^-,m^+)$. If $\zeta$ is also an axis of $\phi$, then $\zeta$ parallels to $\gamma$, hence has the same asymptotic direction $\alpha$.
\end{Pro}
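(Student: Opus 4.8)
The plan is to pin down the asymptotic direction of $\zeta$ by a short topological argument, and then run the Busemann-function machinery of this section to upgrade it to parallelism.

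\emph{Step 1: $\zeta$ has asymptotic direction $\alpha$.} By Remark \ref{rem axis} the axis $\zeta$ is a complete, hence inextendible, timelike line, so by Lemma \ref{thm_Schelling} it has an asymptotic direction $\beta\in[m^-,m^+]$. If $\beta\neq\alpha$, then $\gamma$ and $\zeta$ are two inextendible causal lines with different asymptotic directions, so by Lemma \ref{lem_I_gamma}(1) they meet at a single point $x_0$; since $\phi$ maps each of $\gamma,\zeta$ onto itself, $\phi x_0\in\text{Im}(\gamma)\cap\text{Im}(\zeta)=\{x_0\}$, whence $\phi x_0=x_0$, contradicting $x_0\ll\phi x_0=x_0$ (recall $(\R^2,g)$ has no closed timelike curves). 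Hence $\beta=\alpha\in(m^-,m^+)$. In particular (Lemma \ref{lem_I_gamma}) $\zeta$ is future complete with $I^{\pm}(\zeta)=\R^2$, so its Lorentzian Busemann function is finite everywhere, and Lemma \ref{cor4-1} applies to $\zeta$ as well as to $\gamma$, giving $d(p,\phi p)=\sup_{x\in\R^2}d(x,\phi x)=d(q,\phi q)=:a>0$ for all $p\in\text{Im}(\gamma)$ and $q\in\text{Im}(\zeta)$.

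\emph{Step 2: the Busemann function of each axis is affine along the other.} Parametrize $\gamma$ and $\zeta$ by arclength with $\phi\gamma(t)=\gamma(t+a)$ and $\phi\zeta(t)=\zeta(t+a)$ (Remark \ref{rem axis}), and let $b_\gamma$ denote the Busemann function of $\gamma|_{[0,\infty)}$. Since $\phi$ is an isometry and $\phi^{-1}\gamma(t)=\gamma(t-a)$, a direct substitution in \eqref{busemann} gives $b_\gamma\circ\phi=b_\gamma+a$ on $\R^2$; evaluated along $\zeta$ this reads $b_\gamma(\zeta(t+a))=b_\gamma(\zeta(t))+a$. As $\zeta$ is a unit-speed timelike maximizer, Proposition \ref{pro-buse}(3) gives $b_\gamma(\zeta(t_2))-b_\gamma(\zeta(t_1))\geq t_2-t_1$ whenever $t_1<t_2$, so the function $t\mapsto b_\gamma(\zeta(t))-t$ is non-decreasing and $a$-periodic, hence constant: $b_\gamma$ is affine of slope $1$ along $\zeta$. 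Interchanging $\gamma$ and $\zeta$ (legitimate now that both have interior asymptotic direction) yields that $b_\zeta$ is affine of slope $1$ along $\gamma$.

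\emph{Step 3: co-rays and parallelism.} I claim each future subray $\zeta|_{[s,\infty)}$ is a co-ray to $\gamma|_{[0,\infty)}$. For $s<t$, affineness of $b_\gamma$ along $\zeta$ together with Proposition \ref{pro-buse}(3) shows that $\zeta(t)$ is a foot of $\zeta(s)$ on the horosphere $K_{\infty}(\zeta(t),\gamma)$; applying Lemma \ref{lem_foot} to a triple $\zeta(s')\ll\zeta(s)\ll\zeta(t)$ on $\zeta$ (for which $d(\zeta(s'),\zeta(t))=d(\zeta(s'),\zeta(s))+d(\zeta(s),\zeta(t))$) shows this foot is unique. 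Now a co-ray from $\zeta(s)$ to $\gamma$ is, by definition, a limit of maximizers $\zeta(s)\to\gamma(u_i)$ with $u_i\to\infty$; each such maximizer crosses $K_{\infty}(\zeta(t),\gamma)$, and the length estimates built into \eqref{busemann} force the crossing point toward the unique foot $\zeta(t)$, so the limiting co-ray passes through $\zeta(t)$ for every $t>s$ and therefore coincides with $\zeta|_{[s,\infty)}$. Interchanging $\gamma$ and $\zeta$ gives the corresponding statement for future subrays of $\gamma$, and reversing the time orientation of $(\R^2,g)$ — which again produces the Abelian cover of a class A $2$-torus, now with $m^+$ and $m^-$ interchanged, and turns $\phi^{-1}$ into a $P$-motion whose axes are $\gamma,\zeta$ run backwards with still-interior direction — yields the past-directed statements by the same reasoning. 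Hence $\gamma$ and $\zeta$ are parallel in the sense of Definition \ref{parallel}, and a fortiori share the asymptotic direction $\alpha$. The hard part is exactly this last matching of the horosphere description of asymptotic behavior with the limit-of-maximizers definition of a co-ray: Lemma \ref{lem_foot} is the decisive input, but one must verify that the horospheres involved are nonempty closed sets genuinely met by those maximizers and that the crossing points converge to the foot; the references \cite{Galloway,Bangert94} should be consulted for the basic properties of co-rays used here.
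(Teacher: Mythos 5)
Your proposal is correct, and it reaches parallelism by the same overall strategy as the paper---identify a unique foot of each point of $\zeta$ on horospheres of $\gamma$ via Lemma \ref{lem_foot}, then invoke the standard fact that co-rays issue from unique feet---so the step you flag as ``the hard part'' is made with exactly the same (reference-dependent) leap in the paper itself. The mechanism by which you produce the feet, however, is genuinely different. The paper works only with the discrete orbit $\{\phi^{k}q\}$: it combines the equivariance $K_{\infty}(\phi^{k}q,S)=\phi^{k}K_{\infty}(q,S)$ with Lemma \ref{cor4-1} and Lemma \ref{lem_bf_levelset} to squeeze $d(\phi^{-1}q,K_{\infty}(\phi^{k}q,S))$ between $d(\phi^{-1}q,\phi^{k}q)$ and $d(\phi^{-1}p,\phi^{k}p)$, and it deduces the common asymptotic direction only at the end, as a consequence of parallelism. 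You settle the direction first by the intersection/fixed-point argument and then show that $b_{\gamma}$ is affine of slope one along all of $\zeta$ from $b_{\gamma}\circ\phi=b_{\gamma}+a$ plus monotonicity, which yields a foot on every horosphere rather than only at orbit points. Your ordering buys something real: the paper's equality $\sup_{y\in\R^{2}}d(y,\phi^{k+1}y)=d(\phi^{-1}q,\phi^{k}q)$ tacitly needs $I^{\pm}(\zeta)=\R^{2}$, i.e.\ that $\zeta$ already has interior asymptotic direction, which is precisely what your Step 1 supplies up front (the paper runs essentially the same intersection argument, but only later, in Section 6). The one cosmetic slip is that reversing the time orientation negates, rather than merely interchanges, $m^{\pm}$; the reversed directions of $\gamma$ and $\zeta$ still lie in the interior of the reversed stable time cone, so your past-directed argument is unaffected.
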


\begin{proof}
We shall only consider future directed curves, the proof for past directed curves is completely similar. Let $S$ be a future directed subray of $\gamma$, for any $q\in Im(\zeta)$ and $k\in\Z$, we have
\begin{equation*}
K_{\infty}(\phi^{k}q,S)=K_{\infty}(\phi^{k}q,\phi^kS)=\phi^kK_{\infty}(q,S).
\end{equation*}
By the assumptions, $K_{\infty}(q,S)$ intersects $\gamma$ at a point $p$, then
\begin{equation*}
\phi^k p\in\phi^kK_{\infty}(q,S)=K_{\infty}(\phi^{k}q,S).
\end{equation*}
So $\phi^{k}p$ is the intersection point of $K_{\infty}(\phi^{k}q,S)$ and $S$. From Lemmas \ref{cor4-1} and \ref{lem_bf_levelset},
\begin{eqnarray*}
d(\phi^{-1}p,\phi^{k}p)&=& \sup_{y\in\R^2}d(y,\phi^{k+1}y)\\
                       &=& d(\phi^{-1}q,\phi^{k}q)\\
                       &\leq& d(\phi^{-1}q,K_{\infty}(\phi^{k}q,S))\\
                       &\leq& d(\phi^{-1}p,\phi^{k}p).
\end{eqnarray*}
Thus, $\phi^{k}q$ is a foot of $\phi^{-1}q$ on $K_{\infty}(\phi^{k}q,S)$. Together with Lemma \ref{lem_foot}, we know that $\phi^{k}q$ is the only foot of $q$ on $K_{\infty}(\phi^{k}q,S)$. This implies that the co-ray from $q$ to $\gamma$ is the future directed subray of $\zeta$ emanating from $q$. Therefore, we conclude that $\gamma$ and $\zeta$ are parallel and have the same asymptotic direction, otherwise they would intersect by Lemma \ref{lem_I_gamma}.
\end{proof}

\begin{Cor} \label{prop4-3}
Let $(\R^2,g)$ be the Abelian cover of a class A 2-torus and $\phi$ an orientation preserving $P$-motion on $(\R^2,g)$. If $\gamma$ is an axis of $\phi^k(k>1, k\in\Z_+)$ with asymptotic direction in $(m^-,m^+)$, then $\gamma$ is also an axis of $\phi$.
\end{Cor}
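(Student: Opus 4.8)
\emph{Proof plan.} The plan is to reduce the statement to a purely order-theoretic fact about a finite, linearly ordered family of pairwise disjoint timelike lines. Write $\psi:=\phi^{k}$. First note that $\psi$ is again a $P$-motion, since $p\ll\phi p\ll\cdots\ll\phi^{k}p$ for every $p\in\R^{2}$. For each $j\in\Z$ the curve $\phi^{j}\gamma$ is a maximal timelike geodesic, being the isometric image of one, and because $\psi$ commutes with $\phi$ it is an axis of $\psi$: indeed $\psi(\phi^{j}\gamma)=\phi^{j}(\psi\gamma)=\phi^{j}\gamma$. Applying Proposition \ref{prop4-2} to the $P$-motion $\psi$ with the axis $\gamma$, whose asymptotic direction $\alpha$ lies in $(m^{-},m^{+})$, we conclude that every $\phi^{j}\gamma$ is parallel to $\gamma$ and hence also has asymptotic direction $\alpha\in(m^{-},m^{+})$. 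Since $\phi^{k}\gamma=\gamma$, the $\phi$-orbit $\mathcal{O}:=\{\gamma,\phi\gamma,\dots,\phi^{k-1}\gamma\}$ is a finite set of maximal timelike lines, all with asymptotic direction $\alpha$, and $\phi$ permutes $\mathcal{O}$.

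The next step is to show that two distinct members of $\mathcal{O}$ are disjoint. Suppose $\gamma_{1}\neq\gamma_{2}$ are axes of $\psi$ with asymptotic direction in $(m^{-},m^{+})$; by Lemma \ref{cor4-1} one has $d(q,\psi q)=\sup_{y\in\R^{2}}d(y,\psi y)=:c$ for every $q$ on either line. Parametrizing both lines by arclength with common base point $x$ (if they met), Remark \ref{rem axis} gives $\psi\gamma_{i}(t)=\gamma_{i}(t+c)$, so $\psi^{n}x=\gamma_{1}(nc)=\gamma_{2}(nc)\in\mathrm{Im}(\gamma_{1})\cap\mathrm{Im}(\gamma_{2})$ for all $n\in\Z$. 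Then for $n$ large, the maximizing subsegment of $\gamma_{1}$ from $\psi^{-n}x$ to $\psi^{n}x$ and the corresponding subsegment of $\gamma_{2}$ share the interior point $x$ as well as both endpoints, contradicting Morse's crossing lemma (Lemma \ref{Lemma_Morse}) unless the two subsegments coincide, in which case $\gamma_{1}=\gamma_{2}$. Hence the members of $\mathcal{O}$ are pairwise disjoint, and by Definition \ref{def_point_line} the relation ``$<$'' restricts to a linear order on $\mathcal{O}$.

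Finally I would invoke orientation-preservation. Since $\phi$ is a distance-preserving map of a globally hyperbolic spacetime it is a diffeomorphism, and being orientation preserving it induces orientation-preserving maps $d\phi$ on tangent spaces; by the convention defining $U^{\pm}(\gamma')$ (spacelike curves into $U^{+}$ have positively oriented velocity relative to $\dot{\gamma}'(0)$) it follows that $\phi\big(U^{+}(\gamma')\big)=U^{+}(\phi\gamma')$ for every timelike line $\gamma'$. Therefore $\phi$ preserves the order on $\mathcal{O}$: if $\gamma_{1}<\gamma_{2}$ then $\phi\gamma_{1}<\phi\gamma_{2}$. An order-preserving permutation of a finite linearly ordered set is the identity, so $\phi$ fixes every member of $\mathcal{O}$; in particular $\phi\gamma=\gamma$. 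Since $\gamma$ is a maximal timelike geodesic carried onto itself by $\phi$, it is an axis of $\phi$ by Definition \ref{Def_axis}.

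I expect the two steps requiring real care to be the disjointness claim and the compatibility of $\phi$ with the $U^{\pm}$-labelling: both are visually obvious in the plane but must be extracted honestly from Morse's crossing lemma, Lemma \ref{cor4-1}, and the precise definition of $U^{\pm}$. Everything else is routine bookkeeping with Proposition \ref{prop4-2} and the fact that $\phi^{k}$ commutes with $\phi$.
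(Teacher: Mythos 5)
Your proof is correct and rests on the same two pillars as the paper's: Proposition \ref{prop4-2} applied to $\psi=\phi^{k}$, which makes every $\phi^{j}\gamma$ a parallel axis with asymptotic direction in $(m^{-},m^{+})$, and orientation preservation to control which side of each line $\phi$ sends things to. The two arguments diverge only in the final combinatorial step. The paper takes $Q$ to be the half-plane bounded by $\gamma$ containing $\gamma_{1}=\phi\gamma$ and shows that the half-planes $\phi^{j}Q$ are strictly nested, so that $\phi^{k}Q$, being bounded by $\phi^{k}\gamma=\gamma$ yet properly contained in $Q$, is absurd; you instead note that $\phi$ induces an order-preserving permutation of the finite set $\{\gamma,\phi\gamma,\dots,\phi^{k-1}\gamma\}$ and that such a permutation of a finite linear order is the identity. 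These are two packagings of the same plane-topology fact, but your version has the merit of making explicit the pairwise disjointness of the orbit lines (via Lemma \ref{cor4-1}, Remark \ref{rem axis} and Morse's crossing lemma, Lemma \ref{Lemma_Morse}), which the paper uses silently when it speaks of ``the half plane bounded by $\gamma$ and containing $\gamma_{1}$'' and only justifies later, in Section 6, for axes of $\phi$ itself. Both versions lean equally on the two unstated facts you flag at the end: that for disjoint lines of the same timelike asymptotic direction, $\gamma_{2}\subset U^{+}(\gamma_{1})$ forces $\gamma_{1}\subset U^{-}(\gamma_{2})$ (this is the antisymmetry of your order, and is exactly what the paper needs to assert that $\phi Q$ does not contain $\gamma$), and that $\phi\bigl(U^{+}(\gamma')\bigr)=U^{+}(\phi\gamma')$, which requires time-orientation preservation in addition to orientation preservation of the plane --- automatic for a $P$-motion on a chronological spacetime, but worth a word in a final write-up.
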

\begin{proof}
Suppose that $\gamma$ was not an axis of $\phi$, then $\gamma_1:=\phi\gamma\neq\gamma$ and
\begin{equation*}
  \phi^k\gamma_1=\phi^k\phi\gamma=\phi\phi^k\gamma=\phi\gamma=\gamma_1.
\end{equation*}
So $\gamma_1$ is an axis of $\phi^k$ and by Proposition \ref{prop4-2}, $\gamma_1$ is parallel to $\gamma$. Let $Q$ be the half plane bounded by $\gamma$ and containing $\gamma_1$. Since $\phi$ preserves the orientation, $\phi Q$ is the half plane bounded by $\gamma_1$ and not containing $\gamma$. By repeating this argument, we conclude that $\phi^k Q(k>1)$ is a half plane bounded by $\phi^k\gamma$ and not containing $\gamma, \phi\gamma, \phi^2\gamma,...,\phi^{k-1}\gamma$, which contradicts the fact that $\phi^k\gamma=\gamma$.
\end{proof}

We call $\Gamma=\{\psi_t\}_{t\in\R}$ a one-parameter group of $P$-motions generated by $\psi$ if each of $\psi_t(t>0)$ and $\psi_t^{-1}=\psi_{-t}(t<0)$ is a $P$-motion and $\psi_0=id, \psi_1=\psi, \psi_{t+s}=\psi_t\circ\psi_s$ for all $t,s\in\R$. Then from the above properties, we obtain the following byproduct.
\begin{Pro}\label{prop4-6}
Let $(\R^2,g)$ be the Abelian cover of a class A 2-torus and $\psi$ an orientation preserving $P$-motion on $(\R^2,g)$ with an axis $\gamma$ whose asymptotic direction $\alpha\in(m^-,m^+)$. If there exists a one-parameter group $\Gamma=\{\psi_t\}_{t\in\R}$ of $P$-motions generated by $\psi$, a positive number $t_0$ and a point $p$ such that
\begin{equation*}
d(p,\psi_{t_0}(p))=\sup_{q\in\R^2}d(q,\psi_{t_0}(q))>0
\end{equation*}
then the orbit $\{\psi_t(p):t\in\R\}$ is a timelike line.
\end{Pro}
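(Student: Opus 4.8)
The plan is to show that the orbit $\{\psi_t(p):t\in\R\}$ coincides with the image of a single timelike line, namely an axis of $\psi_{t_0}$ passing through $p$. I would begin by producing that axis. First, every $\psi_t$ is orientation preserving: $t\mapsto\psi_t$ is a group homomorphism from $(\R,+)$ into $\mathrm{Homeo}(\R^2)$ (each $\psi_t$ being an isometry of the globally hyperbolic plane, hence a homeomorphism), so composing with the orientation character $\mathrm{Homeo}(\R^2)\to\{\pm1\}$ gives a homomorphism $\R\to\{\pm1\}$, which is trivial because $\R$ is divisible. Thus $\psi_{t_0}$ is an orientation preserving $P$-motion, and since $p\in I^+(\psi_{t_0},p)\cap I^-(\psi_{t_0},p)$, the hypothesis $d(p,\psi_{t_0}(p))=\sup_{\R^2}d(q,\psi_{t_0}(q))$ is exactly equation \eqref{sup-1} for $\phi=\psi_{t_0}$. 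Theorem \ref{prop4-5} then gives a unique maximizer $T=T(p,\psi_{t_0}(p))$ and a timelike line $\gamma_0:=\bigcup_{i\in\Z}\psi_{t_0}^iT$, which is an axis of $\psi_{t_0}$ and contains $p$.

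The next step, which I expect to be the main obstacle, is to show that $\gamma_0$ has asymptotic direction in the \emph{open} interval $(m^-,m^+)$; Theorem \ref{prop4-5} only guarantees that $\gamma_0$ is some timelike line. Here the hypothesis that $\psi=\psi_1$ has an axis $\gamma$ with asymptotic direction $\alpha\in(m^-,m^+)$ enters. Applying Corollary \ref{prop4-3} to the orientation preserving $P$-motion $\psi_{1/n}$ (whose $n$-th power is $\psi_1$, of which $\gamma$ is an axis with asymptotic direction in $(m^-,m^+)$), one obtains that $\gamma$ is an axis of $\psi_{1/n}$ for every $n$, hence $\mathrm{Im}(\gamma)$ is invariant under $\psi_r$ for every rational $r$. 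Since $t\mapsto\psi_t$ is continuous and $\mathrm{Im}(\gamma)$ is closed (an inextendible timelike geodesic leaves every compact set), $\mathrm{Im}(\gamma)$ is $\psi_t$-invariant for all $t\in\R$; in particular $\gamma$ is an axis of $\psi_{t_0}$. Now $\gamma$ and $\gamma_0$ are two axes of $\psi_{t_0}$ and $\gamma$ has asymptotic direction in $(m^-,m^+)$, so Proposition \ref{prop4-2} shows $\gamma_0$ is parallel to $\gamma$ and shares its asymptotic direction $\alpha\in(m^-,m^+)$.

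Now repeat the rational-approximation argument with $\gamma_0$ in the role of $\gamma$: since $\gamma_0$ is an axis of $\psi_{t_0}=(\psi_{t_0/n})^n$ with asymptotic direction in $(m^-,m^+)$, Corollary \ref{prop4-3} makes $\gamma_0$ an axis of $\psi_{t_0/n}$ for every $n$, so $\mathrm{Im}(\gamma_0)$ is invariant under $\psi_{mt_0/n}$ for all $m\in\Z$, $n\in\Z_+$; by continuity of the flow and closedness of $\mathrm{Im}(\gamma_0)$, it is $\psi_t$-invariant for all $t\in\R$. Since $p\in\mathrm{Im}(\gamma_0)$, the whole orbit lies in $\mathrm{Im}(\gamma_0)$. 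To see the orbit fills $\mathrm{Im}(\gamma_0)$, parametrize $\gamma_0$ by arclength with $\gamma_0(0)=p$; by Remark \ref{rem axis}, $\psi_{t_0}$ translates $\gamma_0$ by $a:=d(p,\psi_{t_0}(p))>0$, so $\psi_{t_0}(p)=\gamma_0(a)$. Writing $\psi_t(p)=\gamma_0(\lambda(t))$ on $[0,t_0]$, the map $\lambda$ is continuous with $\lambda(0)=0$, $\lambda(t_0)=a$, and strictly increasing, because $\psi_s(p)\ll\psi_t(p)$ whenever $s<t$ (as $\psi_{t-s}$ is a $P$-motion) while $\gamma_0(u)\ll\gamma_0(v)$ iff $u<v$ on the timelike line $\gamma_0$ (there are no closed timelike curves). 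Hence $\lambda$ maps $[0,t_0]$ onto $[0,a]$, so $\{\psi_t(p):t\in[0,t_0]\}=\gamma_0([0,a])$, and applying the iterates $\psi_{t_0}^{k}$ together with the group law gives $\{\psi_t(p):t\in\R\}=\mathrm{Im}(\gamma_0)$, a timelike line.

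Besides pinning down the asymptotic direction of $\gamma_0$, the one point to be careful about is that both invariance-spreading steps use continuity of the one-parameter group to pass from rational to arbitrary real times; this is implicit in the notion of a one-parameter group and is in any case necessary for the orbit even to be a curve. The remaining verifications — that $\psi_{t_0}$ translates its axis by the positive number $d(p,\psi_{t_0}(p))$, the elementary monotonicity along a timelike line, and the bookkeeping with $\psi_{t_0}^k$ — are routine.
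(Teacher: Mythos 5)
Your proof is correct and follows essentially the same route as the paper's: Theorem \ref{prop4-5} produces an axis of $\psi_{t_0}$ through $p$, Corollary \ref{prop4-3} together with Proposition \ref{prop4-2} pins its asymptotic direction to $\alpha\in(m^-,m^+)$ and makes it an axis of every $\psi_{t_0/l}$, and density of the times $kt_0/l$ places the whole orbit on that line. You are in fact somewhat more careful than the paper at the points where one passes from rational to arbitrary real times (and in checking that the orbit fills out the line), steps the paper asserts without comment.
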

\begin{proof}
First of all, since $\psi_0=id$ and $\psi_t$ preserves orientation for $t$ small, then all $\psi_t$ preserve orientation for $t\in\R$. Since $\gamma$ is an axis of $\psi$ with asymptotic direction $\alpha\in(m^-,m^+)$ and also an axis of $\psi^k$ for $k\in\Z_+$, from Corallory \ref{prop4-3} we know that $\gamma$ is an axis of $\psi^{\frac{k}{l}}$ for $l\in\Z_+$. Hence, all $\psi_t(t>0)$ has an axis with asymptotic direction $\alpha\in(m^-,m^+)$.

On the other hand, from Theorem \ref{prop4-5}, there exists an axis $\zeta$ of $\psi_{t_0}$, which means that all the points $\psi_{kt_0}(p)=\psi_{t_0}^k(p), k\in\Z$, lie on the timelike line $\zeta$. Then $\zeta$ has asymptotic direction $\alpha\in(m^-,m^+)$ from Proposition \ref{prop4-2}. Again by Corallory \ref{prop4-3}, $\zeta$ is also an axis of $\psi_{\frac{t_0}{l}}$ for any positive number $l\in\Z$. Thus all the points $\psi_{\frac{kt_0}{l}}(p)=\psi_{\frac{t_0}{l}}^k(p)$ lie on $\zeta$ for all $k\in\Z$. Hence $\psi_t(p)$ lies on $\zeta$ for any $t\in\R$.
\end{proof}

\section{Proof of the Main Results}
In this section, we shall prove Theorem \ref{Thm3} by studying the exponential maps locating at timelike poles.

Recall that, similar to Riemannian manifolds, the exponential map $\text{exp}_p:T_pM\rightarrow M$ for a Lorentzian manifold $(M,g)$ is defined as: for $v\in T_pM$, let $\gamma_v(t)$ be the unique geodesic in $M$ with $\gamma_v(0)=p$ and $\dot{\gamma}_v(0)=v$, then the exponential $\text{exp}_p(v):=\gamma_v(1)$ if $\gamma_v(1)$ is defined. The notion of cut point will be useful to us in the following, see also \cite[Section 9.1]{B-E-E}:

\begin{Def}\label{Def_cutpoint}
Define the function $s:T_{-1}M\rightarrow \R\cup\{\infty\}$ by
\begin{equation*}
s(v)=\sup\{t\geq 0:d(\gamma_v(0),\gamma_v(t))=t\},
\end{equation*}
where $T_{-1}M:=\{v\in TM:v\text{ is future directed and }g(v,v)=-1\}$ is the unit future observer bundle and $\gamma_v(t):=\text{exp}_p(tv)$. If $0<s(v)<\infty$ and $\gamma_v(s(v))$ exists, then the point $\gamma_v(s(v))$ is called the future cut point of $p=\gamma_v(0)$ along $\gamma_v$. The past cut points may be defined similarly.
\end{Def}

\begin{Rem}\label{s-prop}
By \cite[Proposition 9.7]{B-E-E}, the function $s:T_{-1}M\rightarrow \R\cup\{\infty\}$ is
\begin{itemize}
  \item positive, i.e., $s(v)>0$ for all $v\in T_{-1}M$, if $(M,g)$ is strongly causal,
  \item lower semicontinuous if $(M,g)$ is globally hyperbolic.
\end{itemize}
\end{Rem}

\begin{Pro}\label{prop_nocutpoint}
Let $(M,g)$ be a globally hyperbolic plane (i.e., $M$ is homeomorphic to $\mathbb{R}^2$) and $p\in M$ a timelike pole, then $p$ has no future (resp. past) causal conjugate or cut points.
\end{Pro}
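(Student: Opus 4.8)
The plan is to treat the conjugate points and the cut points separately. For the conjugate point assertion, I would use that a null geodesic in a Lorentzian surface carries no conjugate points: along a null geodesic $\sigma$ the ``transverse'' Jacobi fields live in the quotient $\dot\sigma^{\perp}/\R\dot\sigma$, which is $0$-dimensional in dimension two precisely because $\dot\sigma$ is null and so $\dot\sigma\in\dot\sigma^{\perp}$ (see \cite[Chapter 10]{B-E-E}). Hence any causal geodesic issuing from $p$ that carried a conjugate point of $p$ would have to be timelike, and the definition of a timelike pole forbids that; so $p$ has no future (or, symmetrically, past) causal conjugate point.

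For the cut points, suppose toward a contradiction that some future timelike unit vector $v\in T_pM$ has $0<s(v)<\infty$, and put $q:=\gamma_v(s(v))$, $\ell:=s(v)=d(p,q)$. Since $q$ is not conjugate to $p$ along $\gamma_v$ by the previous paragraph, the structure theorem for cut points in a globally hyperbolic spacetime (see \cite[Section 9.1]{B-E-E}) provides two distinct maximal timelike geodesic segments $T_1\ne T_2$ from $p$ to $q$, each of length $\ell$; reparametrise both by arclength on $[0,\ell]$. Because $T_i|_{[0,\ell]}$ is maximal and $\dot T_1(\ell)\ne\dot T_2(\ell)$ (otherwise $T_1=T_2$ by uniqueness of geodesics with given initial data), the cut value of $p$ along each $T_i$ is exactly $\ell$: extending either of $T_1,T_2$ slightly past $q$ and concatenating it with the other produces a timelike curve with a corner at $q$, hence a non-maximal curve by the curve-lengthening part of Lemma \ref{Lemma_Morse}. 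Consequently, for every $t\in(0,\ell)$ the segment $T_i|_{[0,t]}$ is the \emph{unique} maximal timelike geodesic from $p$ to $T_i(t)$.

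Next I would extract the planar picture. By the Morse crossing part of Lemma \ref{Lemma_Morse}, $T_1$ and $T_2$ meet only at $p$ and $q$, so $T_1\cup T_2$ is a Jordan curve bounding a closed disk $D$ in the plane $M$. Using that the future timelike cone in each tangent plane is convex one checks that $\mathrm{int}\,D\subseteq I^+(p)\cap I^-(q)$; a clean way to see this is to take a Cauchy surface $\Sigma$ through an interior point of $T_1$, observe that $\Sigma\cap D$ is a spacelike arc joining the two points where $T_1,T_2$ cross $\Sigma$, and invoke connectedness of $I^+(p)\cap\Sigma$. For the contradiction, I would pick points $x\in\mathrm{int}\,D$ tending to $q$: then $p\ll x\ll q$ with $d(p,x)+d(x,q)\le d(p,q)=\ell$, and any maximal timelike geodesic from $p$ to $x$ stays inside $\overline D$ (again by Morse crossing against $T_1,T_2$). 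Along a path $x\to q$ inside $D$ these maximizers subconverge, by global hyperbolicity, to a maximal timelike geodesic from $p$ to $q$ — necessarily $T_1$ or $T_2$ — and tracking the limiting initial directions at $p$, together with the reverse triangle inequality \eqref{rt} and the uniqueness of maximizers established above, forces either a further maximal timelike geodesic from $p$ to $q$ passing through $\mathrm{int}\,D$ (and, on iterating, a continuum of such geodesics, hence a nontrivial Jacobi field vanishing at $p$ and $q$) or such a Jacobi field directly; in either case $q$ is conjugate to $p$ along a timelike geodesic issuing from $p$, contradicting the pole hypothesis. The past statements follow by reversing the time orientation.

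I expect this last step to be the main obstacle: organizing the planar and limiting arguments robustly enough that the existence of two distinct maximizers $T_1\ne T_2$ to the same point is genuinely incompatible with $p$ being a timelike pole — in particular handling the near-degenerate configuration in which $T_1$ and $T_2$ arrive at $q$ from almost mirror-image timelike directions, where the naive comparison of nearby points on $T_1$ and $T_2$ fails and one must really use the global topology of $D$.
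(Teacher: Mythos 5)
Your setup matches the paper's up to the point where the contradiction must be extracted, and that is precisely where your argument has a genuine gap. The paper does \emph{not} try to convert the two maximizers $T_1\neq T_2$ into a conjugate point at $q$; indeed the whole content of the cut-point structure theorem is that a cut point is reached either by a conjugate point \emph{or} by a second maximizer, and there is no general mechanism turning the second alternative into the first. Your proposed route --- letting $x\to q$ inside $\mathrm{int}\,D$ and passing to limits of the maximizers from $p$ to $x$ --- produces nothing new: those limits simply subconverge to $T_1$ or $T_2$, and no further maximal geodesic, continuum of geodesics, or Jacobi field is forced. You flag this step yourself as ``the main obstacle,'' and as written it does not close.

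The missing idea is to work on the arc of initial directions rather than on points of $D$. Write $v_1,v_2$ for the initial unit vectors of the two maximizers and let $\widehat{v_1v_2}\subseteq T_{-1}M|_p$ be the compact arc between them; one first uses the finiteness of timelike geodesics between non-conjugate points (\cite[Corollary 10.44]{B-E-E}) to choose $\gamma_{v_1},\gamma_{v_2}$ \emph{neighboring}, so that $D\setminus\{p,q\}\subseteq I^+(p)\cap I^-(q)$ is swept exactly by the geodesics with directions in this arc --- a normalization your write-up omits and which is needed to control the picture inside $D$. Since $s$ is lower semicontinuous on $T_{-1}M$ (global hyperbolicity, Remark \ref{s-prop}) and $0<s(v)\le d(p,q)$ on the arc, it attains a minimum at some $v_1'$; the corresponding cut point $q'=\gamma_{v_1'}(s(v_1'))$ lies in $D\setminus\{p,q\}$ (one checks $s(v_1')<s(v_1)$, else $s$ would be constant on the arc, contradicting that $\gamma_{v_1},\gamma_{v_2}$ are neighboring), hence $q'$ is again a cut point reached by two distinct directions $v_1'\neq v_2'$ in the arc. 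Iterating the argument with $q',v_1',v_2'$ produces a direction $v_0$ in the arc with $s(v_0)<\min_{v\in\widehat{v_1v_2}}s(v)$, the desired contradiction. Separately, your proof does not address \emph{null} cut points at all (your $v$ is taken timelike), whereas the statement concerns causal cut points; the paper disposes of these via \cite[Theorem 9.15]{B-E-E}, since a null cut point in dimension two would force two distinct null geodesic segments from $p$ to it.
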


\begin{proof}
First we note that any two dimensional Lorentzian manifold has no null conjugate points by \cite[Lemma 10.45]{B-E-E}. Moreover, $(M,g)$ has no null cut points: otherwise from \cite[Theorem 9.15]{B-E-E}, there would exist two null geodesic segments on $M$ intersecting only at $p$ and a cut point of $p$, contradiction!

By time duality, we shall only show that $p$ has no future timelike cut points. Assume that $q$ is a future timelike cut point of $p$, then using \cite[Theorem 9.12]{B-E-E}, there exist two distinct maximal timelike geodesic segments $\gamma_{v_i}:[0,s(v_i)]\rightarrow M$ connecting $p$ with $q$ such that $\gamma_{v_i}(0)=p, \gamma_{v_i}(s(v_i))=q, v_i\in T_{-1}M|_p, i=1,2$ and $s(v_1)=s(v_2)=d(p,q)$.

Since $p$ is a timelike pole, i.e., $q$ is not conjugate to $p$ along any timelike geodesic, then by \cite[Corollary 10.44]{B-E-E} there are only finitely many timelike geodesics from $p$ to $q$. Hence we may suppose that $\gamma_{v_1},\gamma_{v_2}$ are neighboring and denote by $D(\gamma_{v_1},\gamma_{v_2})$ the compact (topological) disk bounded by them, then $D(\gamma_{v_1},\gamma_{v_2})\backslash \{p,q\}\subseteq I^+(p)\cap I^-(q)$. Thus, for any $x\in D(\gamma_{v_1},\gamma_{v_2})\backslash \{p,q\}$, we have $0<d(p,x)<d(p,q)$.

On the other hand, we denote by $\widehat{v_1v_2}$ the compact set in $T_{-1}M|_p$ bounded by $v_1$ and $v_2$, so $0<s(v)\leq s(v_1)=s(v_2)=d(p,q)$ for all $v\in\widehat{v_1v_2}$. By Remark \ref{s-prop}, $s(v)$ is lower semicontinuous hence attains its minimum at $v_{1}'\in\widehat{v_1v_2}$, i.e., $s(v_1')=\min\limits_{v\in\widehat{v_1v_2}}s(v)$. If $t<s(v_1')$, then by the above construction, $\gamma_{v_1'}(t)\in D(\gamma_{v_1},\gamma_{v_2})$, thus $q'=\lim\limits_{t\rightarrow s(v_1')^-}\gamma_{v_1'}(t)$ exists and by Definition \ref{Def_cutpoint}, it is a future cut point of $p$.

If $s(v_1')=s(v_1)$, then $s(v)=s(v_1)$ for all $v\in\widehat{v_1v_2}$, this contradicts the fact that $\gamma_{v_1},\gamma_{v_2}$ are neighboring. Therefore we conclude that $s(v_1')<s(v_1)$. This implies that $q'$ lies in $D(\gamma_{v_1},\gamma_{v_2})\backslash \{p,q\}$, thus there exists $v_2'\neq v_1'$ on $\widehat{v_1v_2}$ such that $s(v_1')=s(v_2')$ and $q'=\gamma_{v_2'}(s(v_2'))$. Repeating the above argument with $q,v_{1},v_{2}$ replaced by $q',v_1',v_2'$, we get some $v_0\in\widehat{v_1v_2}$ such that $s(v_0)<s(v_1')=\min\limits_{v\in\widehat{v_1v_2}}s(v)$. This contradiction completes the proof.
\end{proof}

An immediate corollary of Proposition \ref{prop_nocutpoint} is the following
\begin{Pro}\label{cor_exponential_map}
Let $p$ be a timelike pole on a globally hyperbolic plane $(M,g)$ and
$$
\mathcal{F}^{+}(p)=\{tv|v\in T_{-1}M|_p, 0<t<s(v)\}\subseteq T_pM,
$$
then the exponential map $\text{exp}_p$ is a diffeomorphism from $\mathcal{F}^{+}(p)$ onto $I^+(p)$.
\end{Pro}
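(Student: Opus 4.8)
The plan is to show that $\exp_p$ restricted to $\mathcal{F}^{+}(p)$ is a local diffeomorphism which is both surjective onto $I^{+}(p)$ and injective; since a bijective local diffeomorphism between open manifolds is automatically a diffeomorphism, this suffices. First I would check that $\mathcal{F}^{+}(p)$ is open in $T_pM$: under the diffeomorphism $(t,v)\mapsto tv$ from $(0,\infty)\times T_{-1}M|_p$ onto the future timelike cone in $T_pM$, the set $\mathcal{F}^{+}(p)$ corresponds to $\{(t,v):0<t<s(v)\}$, which is open because $s$ is lower semicontinuous (Remark \ref{s-prop}). Next, for $tv\in\mathcal{F}^{+}(p)$ one picks $t'$ with $t<t'\le s(v)$ and $d(p,\gamma_v(t'))=t'$; the reverse triangle inequality \eqref{rt} then forces $d(p,\gamma_v(\tau))=\tau$ for all $\tau\in[0,t']$, so $\gamma_v$ is defined on $[0,t]$ and $d(p,\gamma_v(t))=t>0$, whence $\exp_p(tv)=\gamma_v(t)\in I^{+}(p)$. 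Finally, the critical points of $\exp_p$ in the timelike cone are exactly the conjugate points of $p$ along the corresponding timelike geodesics; since $p$ is a timelike pole it has none (Proposition \ref{prop_nocutpoint}), so $d(\exp_p)$ is nonsingular throughout $\mathcal{F}^{+}(p)$ and $\exp_p|_{\mathcal{F}^{+}(p)}$ is a local diffeomorphism into $I^{+}(p)$.

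For surjectivity, take $q\in I^{+}(p)$. Global hyperbolicity (Proposition \ref{globally-hyperbolic}) yields a maximal timelike geodesic from $p$ to $q$; parametrize it by arclength as $\gamma_v$ with $v\in T_{-1}M|_p$, $\gamma_v(\ell)=q$ and $\ell=d(p,q)>0$. Then $d(p,\gamma_v(\ell))=\ell$, so $\ell\le s(v)$. If $\ell=s(v)$, then $0<s(v)<\infty$ and $\gamma_v(s(v))=q$ exists, so $q$ would be a future cut point of $p$ (Definition \ref{Def_cutpoint}), contradicting Proposition \ref{prop_nocutpoint}. Hence $\ell<s(v)$, so $\ell v\in\mathcal{F}^{+}(p)$ and $\exp_p(\ell v)=q$.

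Injectivity is the step I expect to be the crux. Suppose $\exp_p(t_1v_1)=\exp_p(t_2v_2)=q$ with $t_iv_i\in\mathcal{F}^{+}(p)$; by the interval property above, $t_1=d(p,q)=t_2=:\ell$, and $\gamma_{v_1}|_{[0,\ell]}$, $\gamma_{v_2}|_{[0,\ell]}$ are maximal timelike segments from $p$ to $q$. If $v_1\ne v_2$ these parametrized segments are distinct and, by uniqueness of geodesics, have different tangents at $q$. Choosing $\delta>0$ so small that $\gamma_{v_1}$ is still maximal on $[0,\ell+\delta]$ (possible since $\ell<s(v_1)$), the concatenation of $\gamma_{v_2}|_{[0,\ell]}$ with $\gamma_{v_1}|_{[\ell,\ell+\delta]}$ is a future-directed timelike curve from $p$ to $\gamma_{v_1}(\ell+\delta)$ of length $\ell+\delta$ with a corner at $q$, hence not maximal by the curve lengthening lemma (Lemma \ref{Lemma_Morse}); this forces $d(p,\gamma_{v_1}(\ell+\delta))>\ell+\delta$, contradicting maximality of $\gamma_{v_1}$ there. (Alternatively, two distinct maximal timelike geodesics from $p$ to $q$ already make $q$ a future cut point of $p$ by \cite[Theorem 9.12]{B-E-E}, again contradicting Proposition \ref{prop_nocutpoint}.) Thus $v_1=v_2$, and then $t_1=t_2$, so $\exp_p$ is injective. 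Combining the three properties, $\exp_p$ is a diffeomorphism from $\mathcal{F}^{+}(p)$ onto $I^{+}(p)$.
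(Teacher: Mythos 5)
Your proof is correct and follows essentially the same route as the paper: local diffeomorphism from the absence of conjugate points, surjectivity from global hyperbolicity plus the absence of cut points, and injectivity from the impossibility of two distinct maximal segments meeting at $q$. You actually spell out the injectivity and the openness of $\mathcal{F}^{+}(p)$ more carefully than the paper's rather terse argument (which appeals to maximal geodesics escaping compact sets), but the key inputs --- Proposition \ref{prop_nocutpoint} and Lemma \ref{Lemma_Morse} --- are the same.
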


\begin{proof}
Since $p$ is a timelike pole, i.e., $p$ has no conjugate points along any timelike geodesic passing through $p$, the exponential map $\text{exp}_p$ is a local diffeomorphism. Proposition \ref{prop_nocutpoint} actually shows that for any $v\in T_{-1}M|_p$, either $s(v)=\infty$ or the maximal timelike geodesic $\gamma_{v}(t),t\in[0,s(v))$, by strongly causal property, escapes from any compact subset of the plane. This implies that $\text{exp}_p:\mathcal{F}^{+}(p)\rightarrow I^+(p)$ is injective. $\text{exp}_p:\mathcal{F}^{+}(p)\rightarrow I^+(p)$ is surjective hence bijective since for any point $q\in I^+(p)$, there exists, by the global hyperbolicity of $(M,g)$, a maximal timelike geodesic $\gamma_{v}$ from $p$ to $q$ and $d(p,q)<s(v)$.
\end{proof}

\begin{Rem} \label{Rem_maximalgeodesic}
The above corollary also has an obvious dual version for $I^{-}(p)$. From the proof, every future (resp. past) inextendible timelike geodesic emanating from a timelike pole on a globally hyperbolic plane $(M,g)$ is maximal and not future (resp. past) imprisoned in any compact set, see also \cite[p.64]{B-E-E}.
\end{Rem}

Note that $(\mathbb{R}^2,g)$ denotes the Abelian cover of a class A 2-torus, we have the following definition.
\begin{Def}
Let $\phi$ be an orientation preserving $P$-motion on $(\R^2,g)$, we call $\phi$ a $P$-translation if the displacement function of $\phi$, $d_{\phi}(p):=d(p,\phi p),p\in \R^2$ attains its maximum on $\R^2$.
\end{Def}

From Theorem \ref{prop4-5}, there is an axis (i.e., a timelike line) $\gamma$ of $\phi$ such that $\phi\gamma(t)=\gamma(t+a)$, for $t\in\R$, where $a=\sup\limits_{p\in I^+(\gamma)\cap I^-(\gamma)}d(p,\phi p)$. If there are two distinct axes $\gamma_1,\gamma_2$ of $\phi$ on $\R^2$, then $\gamma_1,\gamma_2$ have the same asymptotic direction $\alpha\in [m^-,m^+]$. Otherwise $\gamma_1$ and $\gamma_2$ must have an intersection point $p$ by Lemma \ref{lem_I_gamma}. Since both $\gamma_1$ and $\gamma_2$ are $\phi$-invariant, then all $\phi^np, n\in\Z$ are intersection points, which contradicts Lemma \ref{Lemma_Morse}. Hence we say a $P$-translation $\phi$ has asymptotic direction $\alpha$ if one of its axes has asymptotic direction $\alpha$. Recall from Section 1 that $\Gamma$ denotes the group of deck transformations associated to $\pi:(\mathbb{R}^2,g)\rightarrow (\mathbb{T}^2,g)$. Now we set
\begin{equation*}
\hspace{0.5cm}D^+=\{\phi:\phi\in\Gamma \text{ is a $P$-translation with asymptotic direction in $(m^-,m^+)$}\},
\end{equation*}
and $D^-=\{\phi:\phi^{-1} \in D^+\}$, then our main result Theorem \ref{Thm3} is a direct consequence of this last

\begin{The}\label{main_thm2}
Let $(\R^2,g)$ be the Abelian cover of a class A 2-torus and $p$ a timelike pole on it. Then for every $\phi\in D^+\cup D^-$, there is an axis of $\phi$ passing through $p$. Moreover, all displacement functions $d_{\phi}(q)$ for $\phi\in D^+$ (resp. $d_{\phi}(q):=d(\phi q,q)$ for $\phi\in D^-$) attain their maximums at $p$.
\end{The}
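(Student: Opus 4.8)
\textit{Proof plan.}
The plan is to fix $\phi\in D^{+}$ and reduce the whole statement to the single assertion $d(p,\phi p)=M$, where $M:=\sup_{q\in\R^{2}}d(q,\phi q)$; note $M$ is finite and attained because $\phi$ is a $P$-translation, and $M\ge d(p,\phi p)>0$ since $p\ll\phi p$. The case $\phi\in D^{-}$ is handled at the end. This reduction is immediate: from $p\ll\phi p$ and $\phi^{-1}p\ll p$ we get $p\in I^{+}(\phi,p)\cap I^{-}(\phi,p)$, so $d(p,\phi p)\le\sup_{q\in I^{+}(\phi,p)\cap I^{-}(\phi,p)}d(q,\phi q)\le M$; hence $d(p,\phi p)=M$ forces equation~\eqref{sup-1} at $p$, and since $\phi$ is an orientation-preserving $P$-motion on the globally hyperbolic plane $(\R^{2},g)$, Theorem~\ref{prop4-5} produces a unique maximizer $T(p,\phi p)$ together with an axis $\bigcup_{i}\phi^{i}T(p,\phi p)$ of $\phi$ through $p$; the ``moreover'' clause is precisely $d(p,\phi p)=M$.

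To prove $d(p,\phi p)=M$ I would use that $p$ is a timelike pole. Since $p$ is a pole, the maximizer $T(p,\phi p)$ from $p$ to $\phi p\in I^{+}(p)$ is unique (Proposition~\ref{cor_exponential_map}), and by Remark~\ref{Rem_maximalgeodesic} it lies on an inextendible maximal timelike geodesic $\sigma$ through $p$, say $\sigma(0)=p$, $\sigma(m)=\phi p$, $m:=d(p,\phi p)$. Let $\beta\in[m^{-},m^{+}]$ be the asymptotic direction of $\sigma$. One may assume $\beta\in(m^{-},m^{+})$, so that $\sigma$ is a complete timelike line by Lemma~\ref{lem_I_gamma}: if not, it suffices to run the argument for a high power $\phi^{N}\in D^{+}$, whose axes of asymptotic direction in $(m^{-},m^{+})$ are exactly those of $\phi$ (Remark~\ref{rem axis}, Corollary~\ref{prop4-3}), and for which, $p$ and $\phi^{N}p$ being far apart along a homology direction interior to $[m^{-},m^{+}]$, the corresponding geodesic must have asymptotic direction in $(m^{-},m^{+})$. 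Now $\phi\sigma$ is again a complete maximal timelike line, of asymptotic direction $\beta$ (a deck transformation preserves all $H_{1}$-displacements), and it contains $\phi p=\sigma(m)$. Because $\phi p$ is also a timelike pole, maximizers emanating from $\phi p$ are unique (Proposition~\ref{cor_exponential_map}); hence $\sigma$ and $\phi\sigma$, which both pass through $\phi p$, either coincide or meet only at $\phi p$ and there transversally. In the first case $\sigma$ is a maximal timelike geodesic carried onto itself by $\phi$, i.e.\ an axis of $\phi$ through $p$ of asymptotic direction in $(m^{-},m^{+})$, so Lemma~\ref{cor4-1} yields $d(p,\phi p)=M$ (and, in the power case, Corollary~\ref{prop4-3} first returns us to $\phi$).

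It remains to exclude the transverse case. There $\phi^{-1}\sigma$ is a maximal timelike line passing through $p$ (because $\phi p\in\sigma$), of asymptotic direction $\beta$, and $\phi^{-1}\sigma\ne\sigma$ (otherwise $\phi\sigma=\sigma$); thus $\sigma$ and $\phi^{-1}\sigma$ would be two distinct maximal timelike lines through the pole $p$ sharing the asymptotic direction $\beta\in(m^{-},m^{+})$. I claim this cannot happen, by the following uniqueness lemma: \emph{through a timelike pole on the Abelian cover of a class A $2$-torus there passes exactly one maximal timelike line with a prescribed asymptotic direction in $(m^{-},m^{+})$.} Since $p$ is a pole, $\exp_{p}$ is injective on future timelike vectors (Proposition~\ref{cor_exponential_map}), so two such lines can meet only at $p$ and do so transversally; parametrizing the future timelike unit observers at $p$ (a fiber $\cong\R$ of $T_{-1}\R^{2}$) by the hyperbolic angle $\psi$ as in Section~3, I would show that the asymptotic direction $\beta(\psi)$ depends continuously and strictly monotonically on $\psi$ with $\beta(\psi)\to m^{\pm}$ as $\psi\to\pm\infty$, which gives both existence and uniqueness. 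Monotonicity is because geodesics issuing from $p$ in the order of $\psi$ cannot cross one another; strictness — equivalently, the impossibility of a nondegenerate wedge at $p$ foliated by maximal timelike rays all asymptotic to a single direction — is where the absence of conjugate points along geodesics through $p$ is essential.

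This uniqueness lemma is the step I expect to be the main obstacle: the reduction to $d(p,\phi p)=M$, the passage to a power, and the pole-based dichotomy for $\sigma$ are routine, whereas genuinely separating the asymptotic directions of distinct maximal timelike lines through a pole forces one to combine global hyperbolicity, the class A structure, and the no-conjugate-point hypothesis. Finally, for $\phi\in D^{-}$ one has $\phi^{-1}\in D^{+}$, so by the above there is an axis $\gamma$ of $\phi^{-1}$ through $p$; since $\gamma$ is also $\phi$-invariant it contains $\phi p$, and Lemma~\ref{cor4-1} applied to $\phi^{-1}$ and $\gamma$ gives $d(\phi p,p)=\sup_{q\in\R^{2}}d(\phi q,q)$, with $\gamma$ the required axis of $\phi$ through $p$ (cf.\ Remark~\ref{rem axis}).
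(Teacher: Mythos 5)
Your reduction of the theorem to the single equality $d(p,\phi p)=\sup_{q\in\R^2}d(q,\phi q)$ is correct and coincides with the endgame of the paper's proof (via Theorem~\ref{prop4-5} and Lemma~\ref{cor4-1}), and your treatment of $D^-$ by time reversal is fine. The problem is that your route to this equality rests entirely on the ``uniqueness lemma'' --- that through a timelike pole there is exactly one maximal timelike line with a prescribed asymptotic direction in $(m^-,m^+)$ --- and this lemma is not proved; you only sketch a strategy (strict monotonicity of the asymptotic direction $\beta(\psi)$ in the hyperbolic angle $\psi$) and yourself flag it as the main obstacle. The sketch does not close the gap: injectivity of $\exp_p$ (Proposition~\ref{cor_exponential_map}) gives that distinct maximal timelike lines through $p$ meet only at $p$, hence at best weak monotonicity of $\beta(\psi)$, but \emph{strict} monotonicity is exactly the assertion that no nondegenerate wedge of maximal timelike lines through $p$ shares a single asymptotic direction, and the no-conjugate-point hypothesis is an infinitesimal focusing condition that does not obviously control this coarse, large-scale quantity. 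For rational directions the set of maximizers with a fixed asymptotic direction typically has a gap structure containing non-periodic (heteroclinic) maximizers, and excluding two of them through one pole is a statement of roughly the same depth as the theorem itself (it is closer to the subject of the announced sequel on foliations by timelike lines). There is also a secondary soft spot: when you pass to a power $\phi^N$ to force $\beta\in(m^-,m^+)$, you need a uniform bound on how far maximizers stray from straight lines in homology to conclude that the line through $p$ and $\phi^Np$ has asymptotic direction in the open interval; this holds for class A tori but you do not invoke it.

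The paper avoids the uniqueness question altogether by a Busemann--Innami length comparison: it traps the orbit $\{\phi^np\}$ in the strip between two parallel axes $\gamma$ and $\psi\gamma$, uses that the rays $R(\phi^kp,\phi^{k+1}p)$ emanating from the poles $\phi^kp$ are maximal and escape every compact set (Remark~\ref{Rem_maximalgeodesic} --- the only place the pole hypothesis enters), so each must cross the comparison curve $B_n$ routed through a point $z$ on an axis, and then telescopes reverse triangle inequalities to obtain $n\,d(p,\phi p)>L(B_n)$ and, after dividing by $n$ and letting $n\to\infty$, $d(p,\phi p)\geq d(z,\phi z)=\sup_{q}d(q,\phi q)$. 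To salvage your approach you would have to actually prove the uniqueness lemma; otherwise the comparison argument is the viable path.
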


\begin{proof}
The proof is an adaption of Busemann \cite[p.212]{Busemann1}, see also \cite[Proposition 4.1]{Innami}. We shall only consider the case $\phi\in D^+$ since the other case is proved by reversing the time orientation.

Let $\gamma$ be an axis of $\phi$, then $\gamma$ is a complete timelike line whose asymptotic direction in $(m^-,m^+)$. There is nothing to prove if $\gamma$ passes through $p$. Recall the definitions of $p<\gamma$ and $p>\gamma$ from Definition \ref{def_point_line}. Without loss of generality, we may assume that $p>\gamma$. Since $(\R^2,g)$ is the Abelian cover of a class A 2-torus $(\T^2,g)$, there exists a $\psi\in D^+$ such that $p<\psi\gamma$. So $p$ is contained in the strip bounded by $\gamma$ and $\psi\gamma$. Note that $\psi\gamma$ is also an axis of $\phi$ since
\begin{equation*}
\phi(\psi\gamma)(t)=\psi(\phi\gamma)(t)=\psi\gamma(t+a).
\end{equation*}
Thus, all $\phi^np,n\in\Z$ are contained in this strip since $\phi$ preserves the orientation of $\R^2$.

For $n\in\N (n\geq2)$ and each $k=0,1,...,n-1$, since $\phi^kp(\phi^0:=id)$ are timelike poles, we denote by $T(\phi^kp,\phi^{k+1}p)$ the unique maximal timelike geodesic segment from $\phi^kp$ to $\phi^{k+1}p$ and construct the piecewise smooth timelike curve $A_n=\bigcup\limits^{n-1}_{k=0}\phi^{k}T(p,\phi p)$; denote by $R_k:=R(\phi^kp,\phi^{k+1}p)$ the unique timelike ray starting from $\phi^kp$ and passing through $\phi^{k+1}p$, the existence of $R_k$ is guaranteed by Proposition \ref{cor_exponential_map}.

Since $\phi$ preserves orientation, $\phi^np$ stays in the same side of $R(\phi^{n-2}p,\phi^{n-1}p)$ as that of $R(p,\phi p)$ in which $\phi^2 p$ stays, for all $n\geq 2$. Hence by the argument used in the last part of the proof of Theorem \ref{prop4-5}, $R_k,k=0,...,n-1$ can not intersect $A_{n}$ transversally.
Also, we can choose one of $\gamma$ and $\psi\gamma$, denoted by $\zeta$, such that $\zeta$ and $R_k$ lie in the same side of $A_n$ for $k=0,1,...,n-1$ and $n\geq 2$.

Since $\gamma$ and $\psi\gamma$ have the same timelike asymptotic direction $\alpha$, by Lemma \ref{lem_I_gamma}, there exists $n_0\in\Z_+$ such that $I^+(p)\cap I^-(\phi^{n_0}p)$ intersects both sets $Im(\gamma)$ and $Im(\psi\gamma)$. Fixing this $n_{0}$, for any $z\in I^+(p)\cap I^-(\phi^{n_0}p)\cap \zeta$, we have
\begin{equation} \label{E_d}
d(p,z)+d(z,\phi^{n_0} p)<\infty.
\end{equation}

In the following, let $n>n_{0}$ and $D_n$ be the compact region bounded by $A_n$ and $B_n:=T(p,z)\cup T(z,\phi^{n-n_0}z)\cup T(\phi^{n-n_0}z,\phi^n p)$. From Remark \ref{Rem_maximalgeodesic} we know that $R_k$ must intersect $B_n$ exactly once for $k=0,1,\ldots,n-1$. Denote by $e_k$ the intersection point of $R_{k-1}$ and $B_n$ for $k=1,...,n-1$ and $e_0:=p$, $e_n:=\phi^np$ (see Figure 2).

\begin{figure}
    \begin{tikzpicture}
      \draw (-3,0) node[left]{$\zeta$} to (10.5,0);
      \draw (-1.5,5.5) node[right]{$I^+(p)$};
      \draw (-3,5.5) node[above]{$I^-(p)$};
      \draw (-2,0) node[below]{past};
      \draw (10,0) node[below]{future};
      \draw[fill={rgb:black,1;white,5}] (-2,5.5)--++ (45:1)--++(255:1.73205)--++(105:2)--++(255:1.73205)--++(45:1);
      \draw[fill={rgb:black,1;white,5}] (10,4.5)--++ (70:1)--++(-80:1.73205)--++(130:2)--++(-80:1.73205)--++(70:1);

      \draw[thick] (-2,5.5) to[out=-60,in=165] coordinate[pos=0](p) coordinate[pos=1/4](p1) coordinate[pos=1/2](p2) coordinate[pos=3/4](p3) coordinate[pos=1](p4)(3,2);
      \draw[thick] (8,3) to[out=30,in=225] coordinate[pos=0](p8) coordinate[pos=0.5](p9) coordinate[pos=1](p10) (10,4.5);
      \draw[thick,dashed] (p4) to[out=-15,in=210] (p8);
      \fill[blue] (p) circle (1pt);\fill[blue] (p1) circle (1pt);\fill[blue] (p2) circle (1pt);\fill[blue] (p3) circle (1pt);
      \fill[blue] (p8) circle (1pt);\fill[blue] (p9) circle (1pt);\fill[blue] (p10) circle (1pt);
      \draw (p) node[below=4pt]{$p$};\draw (p1) node[above=2pt]{\small $\phi p$};\draw (p2) node[above=2pt]{\small $\phi^2p$};\draw (p3) node[above=1.5pt]{\small $\phi^3p$};
      \draw (p8) node[left=3pt]{\small $\phi^{n-2}p$};\draw (p9) node[left]{\small $\phi^{n-1}p$};\draw (p10) node[above=3pt]{\small $\phi^{n}p$};
      \draw[red,thick] (p) to(1,0)node[black,below=2pt]{$z$};
      \draw[red,thick] (1,0) to (8,0);
      \draw[red,thick] (p10)to coordinate[pos=0.15](en-1) (8,0)node[black,below]{\small$\phi^{n-n_0}z$};
      \draw[dashed] (p1) to(2,0)node[above=2pt]{$e_1$};
      \draw[dashed] (p2) to(3.5,0)node[above=2pt]{$e_2$};
      \draw[dashed] (p3) to(5,0)node[above=2pt]{$e_3$};
      \draw[dashed] (p9) to(en-1) node[right]{\small$e_{n-1}$};
      \fill (1,0) circle(1pt);\fill (2.5,0) circle(1pt);\fill(4,0) circle(1pt);\fill (8,0)circle(1pt);\fill (en-1)circle(1pt);
      \draw (2.5,0) node[below]{$\phi z$};\draw (4,0) node[below]{$\phi^2 z$};
      \draw (5,2) node[above]{\large $A_n$};
      \draw (0,2) node[left=2pt]{\large $B_n$};

      \draw[|<->|] (2,-0.8) --(3.5,-0.8) node[pos=0.5,below]{\small$\delta_1$};
      \draw[<->|] (3.5,-0.8)--(5,-0.8) node[pos=0.5,below]{\small$\delta_2$};
    \end{tikzpicture}
    \caption{}
    \label{Figure:2}
\end{figure}

Since the chronological relation $\ll$ is transitive and $\phi$ is a $P$-translation, we have the following chronological relations:
\begin{eqnarray*}
p\ll \phi p\ll\cdots\ll \phi^{n-1} p\ll \phi^np,  \\
p\ll z\ll\phi z\ll\cdots\ll \phi^{n-n_0}z\ll \phi^np.
\end{eqnarray*}

Let $\delta_k$ be the $g$-arc length of the subcurve of $B_n$ joining $e_k$ and $e_{k+1}$ for each $k=0,1,2,\ldots,n-1$. Then we have
\begin{eqnarray*}
d(p,\phi p)+d(\phi p,e_1) & > & \delta_0,  \\
d(\phi p,\phi^2 p)+d(\phi^2 p,e_2) & > & d(\phi p,e_1)+\delta_1,  \\
d(\phi^2 p,\phi^3 p)+d(\phi^3 p,e_3) & > & d(\phi^2 p,e_2)+\delta_2,  \\
& \vdots & \\
d(\phi^{n-1} p,\phi^n p) & > & d(\phi^{n-1} p,e_{n-1})+\delta_{n-1}.
\end{eqnarray*}
Thus, $d(p,\phi p)+d(\phi p,\phi^2 p)+\cdots+d(\phi^{n-1} p,\phi^n p) > \delta_0+\delta_1+\cdots+\delta_{n-1}$. So we have
\begin{eqnarray*}
L(A_n)=n d(p,\phi p)  &>& L(B_n)=\sum\limits_{i=0}^{n-1}\delta_i   \\
&=& d(p,z)+\sum\limits_{i=0}^{n-n_0-1}d(\phi^iz,\phi^{i+1}z)+d(\phi^{n-n_0}z,\phi^np).
\end{eqnarray*}
Dividing both sides by $n$,
\begin{equation*}
d(p,\phi p)>(1-\frac{n_0}{n})d(z,\phi z)+\frac{d(p,z)+d(\phi^{n-n_0}z,\phi^np)}{n}.
\end{equation*}
Letting $n\rightarrow\infty$, together with inequality \eqref{E_d}, we get
\begin{equation*}
d(p,\phi p)\geq d(z,\phi z)=\sup_{y\in I^+(\phi,p)\cap I^-(\phi,p)}d(y,\phi y)
\end{equation*}
since $z\in\gamma$. Thus, by Proposition \ref{prop4-1} and Theorem \ref{prop4-5}, $d(p,\phi p)=d(z,\phi z)$ and there exists an axis of $\phi$ that passes through $p$.
\end{proof}


\begin{thebibliography}{999}
\bibitem{Bangert94} Bangert, V.: Geodesic rays, Busemann functions and monotone twist maps. Calc. Var. Partial Differ. Equ. 2(1), 49-63(1994)

\bibitem{Bangert10} Bangert, V.: On 2-tori having a pole. Progress in variational methods, Nankai Ser. Pure Appl. Math. Theoret. Phys. 7, 1-10, World Sci. Publ., Hackensack, NI(2011)

\bibitem{Bavard-Mounoud} Bavard, C., Mounoud, P.: Sur les surfaces lorentziennes compactes sans points conjugu\'{e}s (On compact Lorentzian surfaces without conjugate points). Geometry and Topology. 17, 469-492(2013)

\bibitem{B-E-E} Beem, J.K., Ehrlich, P.E., Easley, K.L.: Global Lorentzian geometry. 2nd edn. Marcel Dekker, Inc., New York(1996)

\bibitem{Burago} Burago, D.: Periodic metrics. Represent. Dyn. Syst. (Adv. Sov. Math.)9, 205-120(1992)

\bibitem{Burago-Ivanov} Burago, D., Ivanov, S.: Riemannian tori without conjugate points are flat. Geom. Funct. Anal. 4(3), 259-269(1994)

\bibitem{Busemann1} Busemann, H.: The geometry of geodesics. Academic Press, New York(1955)

\bibitem{Busemann2} Busemann, H., Pederson, F.P.: Tori with one-parameter groups of motions. Math. Scand. 3(2), 209-220(1956)

\bibitem{Busemann3} Busemann, H.: Recent synthetic differential geometry. Springer-Verlag, Berlin-Heidelberg-New York(1970)

\bibitem{Carmo} Carmo, M.P.do: Riemannian Geometry. Translated from the second Portuguese edition by Francis Flaherty. Mathematics: Theory and Applications. Birkh\"{a}user Boston, Inc., Boston, MA (1992)

\bibitem{Cui-Jin} Cui, X., Jin, L.: The negative of regular cosmological time function is a viscosity solution. J. Math. Phys. 55(10), 102705(2014)

\bibitem{Galloway} Galloway, G.J., Horta, A.: Regularity of Lorentzian Busemann functions. Trans. Amer. Math. Soc. 348(5), 2063-2084(1996)

\bibitem{Hopf} Hopf, E.: Closed surfaces without conjugate points. Proc. Nat. Acad. Sci. U.S.A. 34, 47-51(1948)

\bibitem{Innami} Innami, N.: On tori having poles. Invent. math. 84, 437-443(1986)

\bibitem{Jin-Cui2} Jin, L., Cui, X.: Weak solutions on Lorentzian 2-tori. Acta. Math. Sin.-English Ser. 33(10), 1352-1360(2017)

\bibitem{Jin-Cui1} Jin, L., Cui, X.: Global viscosity solutions for eikonal equations on class A Lorentzian 2-tori. Geom. Dedicata. 193, 155-192(2018)

\bibitem{Sc} Schelling, E.: Maximale Geod\"{a}tische auf Lorentz-Mannigfaltigkeiten (Maximal Geodesics On Lorentzian Manifold). Diploma Thesis, Albert-Ludwigs-Universit\"{a}t Freiburg (University of Freiburg)(1995)

\bibitem{Su1} Suhr, S.: Class A spacetimes. Geom. Dedicata. 160, 91-117(2012)

\bibitem{Su2} Suhr, S.: Closed geodesics in Lorentzian surfaces. Trans. Amer. Math. Soc. 365(3), 1469-1486(2013)

\bibitem{Su3} Suhr, S.: Length maximizing invariant measures in Lorentzian geometry. arxiv:/1102.1386

\end{thebibliography}
\end{document}